\newtheorem{theorem}{Theorem}[section]
\newtheorem{lemma}[theorem]{Lemma}
\newtheorem{corollary}[theorem]{Corollary}
\newtheorem{proposition}[theorem]{Proposition}
\theoremstyle{definition}
\theoremstyle{remark}
\newtheorem{remark}[theorem]{Remark}
\numberwithin{equation}{section}
\begin{document}
\title[First eigenvalue of the $p$-Laplace operator along the Ricci flow]
{First eigenvalue of the $p$-Laplace operator\\ along the Ricci
flow}

\author{Jia-Yong Wu}
\address{Department of Mathematics, East China Normal
University, Dong Chuan Road 500, Shanghai 200241, People's Republic
of China} \email{jywu81@yahoo.com}

\author{Er-Min Wang}
\address{Department of Mathematics, East China Normal
University, Dong Chuan Road 500, Shanghai 200241, People's Republic
of China} \email{wagermn@126.com}

\author{Yu Zheng}
\address{Department of Mathematics, East China Normal
University, Dong Chuan Road 500, Shanghai 200241, People's Republic
of China} \email{zhyu@math.ecnu.edu.cn}

\thanks{This work is partially supported by the
NSFC10871069.} \subjclass[2000]{Primary 58C40; Secondary 53C44.}
\date{July 1, 2009.}

\dedicatory{} \keywords{Ricci flow; first eigenvalue; $p$-Laplace
operator; continuity; monotonicity; differentiability.}

\begin{abstract}
In this paper, we mainly investigate continuity, monotonicity and
differentiability for the first eigenvalue of the $p$-Laplace
operator along the Ricci flow on closed manifolds. We show that the
first $p$-eigenvalue is strictly increasing and differentiable
almost everywhere along the Ricci flow under some curvature
assumptions. In particular, for an orientable closed surface, we
construct various monotonic quantities and prove that the first
$p$-eigenvalue is differentiable almost everywhere along the Ricci
flow without any curvature assumption, and therefore derive a
$p$-eigenvalue comparison-type theorem when its Euler characteristic
is negative.
\end{abstract}
\maketitle

\section{Introduction}\label{sec1}
Given a compact Riemannian manifold $(M^n,g_0)$ without boundary,
the Ricci flow is the following evolution equation
\begin{equation}\label{flow}
\frac{\partial}{\partial t}g_{ij}=-2R_{ij}
\end{equation}
with the initial condition $g(x,0)=g_0(x)$, where $R_{ij}$ denotes
the Ricci tensor of the metric $g(t)$. The normalized Ricci flow is
\begin{equation}\label{norflow}
\frac{\partial}{\partial
\tilde{t}}\tilde{g}_{ij}=-2\tilde{R}_{ij}+\frac 2 n \tilde{r}
\tilde{g}_{ij},
\end{equation}
where $\tilde{g}(\tilde{t}):=c(t)g(t)$,
$\tilde{t}(t):=\int^t_0c(\tau)d\tau$ and
\begin{equation}
\begin{aligned}\label{avesca}
c(t):=\exp\left(\frac 2n\int^t_0r(\tau)d\tau\right),\quad
\quad\tilde{r}:={\int_M \tilde{R}d\tilde{\mu}}\Big/{\int_M
d\tilde{\mu}},
\end{aligned}
\end{equation}
($d\tilde{\mu}$ and $\tilde{R}$ denote the volume form and the
scalar curvature of the metric $\tilde{g}(\tilde{t})$,
respectively.) which preserves the volume of the initial manifold.
Both evolution equations were introduced by R.S. Hamilton to
approach the geometrization conjecture in \cite{Hamilton}. Recently,
studying the eigenvalues of geometric operator is a very powerful
tool for understanding of Riemannian manifolds. In \cite{Perelman},
G. Perelman introduced the functional
\[
\mathcal {F}(g(t), f(t)):=\int_M \left(R+|\nabla
f|^2\right)e^{-f}d\mu
\]
and showed that this functional is nondecreasing along the Ricci
flow coupled to a backward heat-type equation. More precisely, if
$g(t)$ is a solution to the Ricci flow (\ref{flow}) and the coupled
$f(x,t)$ satisfies the following evolution equation:
\[
\frac{\partial f}{\partial t}=-\Delta f+|\nabla f|^2-R,
\]
then we have
\[
\frac{\partial \mathcal {F}}{\partial t}=2\int_M\left|Ric
+\nabla^2f\right|^2e^{-f}d\mu.
\]
If we define
\[
\lambda(g(t)):=\inf\limits_{f\neq 0}\left\{\mathcal{F}(g(t),
f(t)):f\in C^\infty(M), \int_M e^{-f}d\mu=1\right\},
\]
then $\lambda(g(t))$ is the lowest eigenvalue of the operator
$-4\Delta+R$, and the increasing of the functional $\mathcal {F}(g,
f)$ implies the increasing of $\lambda(g(t))$.

Later in \cite{Cao1}, X.-D. Cao studied the eigenvalues $\lambda$
and eigenfunctions $f$ of the new operator $-\Delta+R/2$ satisfying
$\int_M f^2 d\mu=1$ on closed manifolds with nonnegative curvature
operator. In fact he introduced
\begin{equation}\label{caooper1}
\lambda(f,t):=\int_M \left(-\Delta f+\frac R2 f\right)f d\mu,
\end{equation}
where $f$ is a smooth function satisfying $\int_M f^2 d\mu=1$ and
obtained the following

\vspace{0.5em}

\noindent \textbf{Theorem A.} (X.-D. Cao \cite{Cao1}) \emph{On a
closed Riemannian manifold with nonnegative curvature operator, the
eigenvalues of the operator $-\Delta+\frac R2$ are nondecreasing
under the unnormalized Ricci flow, i.e.
\begin{equation}\label{caomono1}
\frac{d}{dt}\lambda(f,t)=2\int_M Ric(\nabla f,\nabla f)+\int_M
|Ric|^2 f^2 d\mu\geq 0.
\end{equation}}
In (\ref{caomono1}), when $\frac{d}{dt}\lambda(f,t)$ is evaluated at
time $t$, $f$ is the corresponding eigenfunction of $\lambda(t)$.
Hence $\lambda(t)$ is nondecreasing.

\vspace{0.5em}

Shortly thereafter J.-F. Li in \cite{JFLi} dropped the curvature
assumption and also obtained the above result for the operator
$-\Delta+\frac R2$. In fact, he used new entropy functionals to
derive a general result.

\vspace{0.5em}

\noindent \textbf{Theorem B.} (J.-F. Li \cite{JFLi}) \emph{On a
compact Riemannian manifold $(M, g(t))$, where $g(t)$ satisfies the
unnormalized Ricci flow for $t\in[0,T)$, the lowest eigenvalue
$\lambda_k$ of the operator $-4\Delta+kR$ $(k>1)$ is nondecreasing
under the unnormalized Ricci flow. The monotonicity is strict unless
the metric is Ricci-flat.}

\vspace{0.5em}

At around the same time, X.-D. Cao in \cite{Cao} also considered the
general operator $-\Delta+cR$ $(c\geq 1/4)$, and derived the
following exact monotonicity formula.

\vspace{0.5em}

\noindent \textbf{Theorem C.} (X.-D. Cao \cite{Cao}) \emph{Let
$(M^n, g(t))$, $t\in[0,T)$, be a solution of the unnormalized Ricci
flow (\ref{flow}) on a closed manifold $M^n$. Assume that
$\lambda(t)$ is the lowest eigenvalue of $-\Delta+cR$ $(c\geq 1/4)$
and $f=f(x, t)>0$ satisfies
\[
-\Delta f(x,t)+c Rf(x,t)=\lambda(t)f(x,t)
\]
with $\int_M f^2d\mu=1$. Then under the unnormalized Ricci flow, we
have
\begin{equation}\label{caomono2}
\frac{d}{dt}\lambda(t)=\frac1 2\int_M |Ric+\nabla^2 \varphi|^2
e^{-\varphi}d\mu+\frac{4c-1}{2}\int_M |Ric|^2e^{-\varphi}d\mu\geq 0,
\end{equation}
where $e^{-\varphi}=f^2$.}

\vspace{0.5em}

On the other hand, L. Ma in \cite{Ma} considered the eigenvalues of
the Laplace operator along the Ricci flow and proved the following
result.

\vspace{0.5em}

\noindent \textbf{Theorem D.} (L. Ma \cite{Ma}) \emph{Let $g=g(t)$
be the evolving metric along the unnormalized Ricci flow with
$g(0)=g_0$ being the initial metric in $M$. Let $D$ be a smooth
bounded domain in $(M, g_0)$. Let $\lambda>0$ be the first
eigenvalue of the Laplace operator of the metric $g(t)$. If there is
a constant such that the scalar curvature $R\geq2a$ in
$D\times\{t\}$ and the Einstein tensor
\[
E_{ij}\geq-ag_{ij}\quad \quad\mathrm{in}\quad D\times\{t\},
\]
then we have $\lambda'\geq0$, that is, $\lambda$ is nondecreasing in
$t$, furthermore, $\lambda'(t)>0$ for the scalar curvature $R$ not
being the constant $2a$. The same monotonicity result is also true
for other eigenvalues.}

\vspace{0.5em}

Moreover S.-C. Chang and P. Lu in \cite{ChLu} studied the evolution
of Yamabe constant under the Ricci flow and gave a simple
application. Motivated by the above works, in this paper we will
study the first eigenvalue of the $p$-Laplace operator whose metric
satisfying the Ricci flow. For the $p$-Laplace operator, besides
many interesting properties between the eigenvalues of the
$p$-Laplace operator and geometrical invariants were pointed out in
fixed metrics (e.g. \cite{Grosjean}, \cite{Kawai}, \cite{Kotschwar},
\cite{Matei}), the first author in \cite{Wu2} studied the
monotonicity for the first eigenvalue of the $p$-Laplace operator
along the Ricci flow on closed manifolds.

In this paper, on one hand we will improve those results in
\cite{Wu2} and discuss the differentiability for the first
eigenvalue of the $p$-Laplace operator along the unnormalized Ricci
flow. Meanwhile we construct some monotonic quantities along the
unnormalized Ricci flow. On the other hand, we will deal with the
case of the normalized Ricci flow in the same way and give an
interesting application. For the unnormalized Ricci flow, we first
have
\begin{theorem}\label{T101a}
Let $g(t)$, $t\in[0,T)$, be a solution of the unnormalized Ricci
flow (\ref{flow}) on a closed manifold $M^n$ and $\lambda_{1,p}(t)$
be the first eigenvalue of the $p$-Laplace operator $(p>1)$ of
$g(t)$. If there exists a nonnegative constant $\epsilon$ such that
\begin{equation}\label{tiaojian1}
R_{ij}-\tfrac{R}{p}g_{ij}\geq -\epsilon g_{ij}\quad \quad
\mathrm{in}\quad M^n\times[0,T)
\end{equation}
and
\begin{equation}\label{tiaojian2}
R\geq p\cdot\epsilon\quad \mathrm{and}\quad R\not\equiv
p\cdot\epsilon\quad \quad\mathrm{in}\quad M^n\times\{0\},
\end{equation}
then $\lambda_{1,p}(t)$ is strictly increasing and differentiable
almost everywhere along the unnormalized Ricci flow on $[0,T)$.
\end{theorem}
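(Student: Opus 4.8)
The plan is to work entirely from the variational (Rayleigh quotient) characterisation of the first nonzero eigenvalue, which for a fixed metric $g$ reads
\[
\lambda_{1,p}=\inf\Big\{\int_M|\nabla u|^p\,d\mu:\ \int_M|u|^p\,d\mu=1,\ \int_M|u|^{p-2}u\,d\mu=0\Big\},
\]
and to get around two difficulties at once: that $t\mapsto\lambda_{1,p}(t)$ need not be differentiable, and that for $p\neq2$ the $p$-eigenfunction is only $C^{1,\alpha}$. First I would establish (or quote) that $\lambda_{1,p}(t)$ is continuous in $t$, which follows from the smooth dependence of $g(t)$ on $t$ together with the infimum characterisation. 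Then, fixing $t_0\in[0,T)$ and letting $f_0$ be a normalised eigenfunction at $t_0$, I would construct a smooth one-parameter family of admissible test functions of the form $f=a(t)\bigl(f_0+b(t)\bigr)$ with $a(t_0)=1$, $b(t_0)=0$, where $a,b$ are produced by the implicit function theorem so that the two constraints $\int_M|f|^p\,d\mu_{g(t)}=1$ and $\int_M|f|^{p-2}f\,d\mu_{g(t)}=0$ persist near $t_0$ (the relevant Jacobian at $(a,b)=(1,0)$ is diagonal with positive entries, using the two constraints at $t_0$). Inserting this family into the Rayleigh quotient defines a smooth upper barrier $\mu(t):=\int_M|\nabla f|^p_{g(t)}\,d\mu_{g(t)}$ with $\mu(t_0)=\lambda_{1,p}(t_0)$ and $\mu(t)\ge\lambda_{1,p}(t)$ for $t$ near $t_0$.

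The second step is to differentiate $\mu$ at $t_0$. Using the Ricci flow evolutions $\partial_t g^{ij}=2R^{ij}$ and $\partial_t(d\mu)=-R\,d\mu$, I obtain
\[
\mu'(t_0)=p\int_M|\nabla f_0|^{p-2}\mathrm{Ric}(\nabla f_0,\nabla f_0)\,d\mu-\int_M R\,|\nabla f_0|^p\,d\mu+p\int_M|\nabla f_0|^{p-2}\langle\nabla\dot f,\nabla f_0\rangle\,d\mu,
\]
where $\dot f=\partial_t f|_{t_0}$. The last term is handled by integrating by parts and invoking the eigenvalue equation $-\Delta_p f_0=\lambda_{1,p}(t_0)|f_0|^{p-2}f_0$ together with the differentiated normalisation $p\int_M|f_0|^{p-2}f_0\,\dot f\,d\mu=\int_M R\,|f_0|^p\,d\mu$; it collapses to $\lambda_{1,p}(t_0)\int_M R\,|f_0|^p\,d\mu$, so that $\dot f$ disappears entirely from the formula.

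The third step is to insert the hypotheses. Condition (\ref{tiaojian1}) gives $\mathrm{Ric}(\nabla f_0,\nabla f_0)\ge(\tfrac Rp-\epsilon)|\nabla f_0|^2$, so the first two integrals combine to at least $-p\epsilon\int_M|\nabla f_0|^p\,d\mu=-p\epsilon\,\lambda_{1,p}(t_0)$, whence
\[
\mu'(t_0)\ge\lambda_{1,p}(t_0)\Big(\int_M R\,|f_0|^p\,d\mu-p\epsilon\Big).
\]
By (\ref{tiaojian2}) and the evolution $\partial_tR=\Delta R+2|\mathrm{Ric}|^2\ge\Delta R$, the quantity $R-p\epsilon\ge0$ is a supersolution of the heat equation that is not identically zero at $t=0$; the strong maximum principle then forces $R>p\epsilon$ everywhere for $t>0$, while $R\not\equiv p\epsilon$ at $t=0$ covers $t_0=0$. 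Since the first eigenfunction is nonzero almost everywhere and $\lambda_{1,p}>0$, the bracket is strictly positive, so $\mu'(t_0)>0$.

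Finally I would transfer strict positivity of the barrier derivative to $\lambda_{1,p}$ itself. From $\lambda_{1,p}(t)\le\mu(t)$ with equality at $t_0$, dividing by $t-t_0<0$ gives $\liminf_{t\to t_0^-}\frac{\lambda_{1,p}(t)-\lambda_{1,p}(t_0)}{t-t_0}\ge\mu'(t_0)>0$, i.e. a Dini derivative of the continuous function $\lambda_{1,p}$ is strictly positive at every $t_0$; the standard monotonicity criterion for Dini derivatives then yields that $\lambda_{1,p}$ is strictly increasing on $[0,T)$. Differentiability almost everywhere is then immediate from the Lebesgue theorem that a monotone function is differentiable a.e. The step I expect to be most delicate is not the monotonicity formula but the rigorous justification of the barrier construction and of the integration by parts at the reduced regularity of $p$-eigenfunctions (in particular for $1<p<2$, where $|s|^{p-2}$ is singular at the nodal set); once those are secured, the curvature hypotheses plug in exactly as above.
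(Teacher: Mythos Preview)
Your proposal is correct and follows essentially the same strategy as the paper: build a smooth one-parameter family of admissible test functions that agrees with the eigenfunction at the base time, differentiate its Rayleigh numerator, eliminate the $\dot f$ term via the eigenvalue equation and the differentiated normalisation, and then feed in the curvature hypotheses to get strict positivity. The differential inequality you obtain at $t_0$ is exactly the paper's formula for $\mathcal G(g(t_2),f(t_2))$ in its Proposition~\ref{propo1} and the subsequent estimate, and your Dini-derivative conclusion is equivalent to the paper's ``$\mathcal G>0$ near $t_2$, hence $\int_{t_1}^{t_2}\mathcal G>0$'' argument.

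The one place where the paper does something you may find useful is the construction of the test family. Instead of invoking the implicit function theorem on $(a,b)$ --- which, as you note, for $1<p<2$ runs into the singular integral $\int_M|f_0|^{p-2}\,d\mu$ --- the paper writes down an explicit family
\[
h(t)=f_{t_2}\cdot\Big[\tfrac{\det g(t_2)}{\det g(t)}\Big]^{\frac{1}{2(p-1)}},\qquad f(t)=\frac{h(t)}{\big(\int_M|h(t)|^p\,d\mu_{g(t)}\big)^{1/p}}.
\]
The point of the exponent is that $|h|^{p-2}h\,d\mu_{g(t)}=|f_{t_2}|^{p-2}f_{t_2}\,d\mu_{g(t_2)}$ as measures, so the zero-mean constraint is preserved for all $t$ automatically, and the $L^p$-normalisation is a smooth scalar. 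This neatly avoids the IFT altogether and the attendant regularity worry you flag. In exchange, your $(a,b)$-family is conceptually simpler and makes the ``$\dot f$ is a linear combination of $f_0$ and $1$'' structure transparent; either route leads to the same derivative formula at $t_0$. The remaining regularity issues (eigenfunctions are only $C^{1,\alpha}$, integration by parts is in the weak sense) are present in both approaches and are harmless for the argument.
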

\begin{remark}
(1). In \cite{Wu2}, the first author proved a similar result as in
Theorem \ref{T101a}, where he assumed $p\geq 2$, inequality
(\ref{tiaojian1}) and $R>p\cdot\epsilon$ in $M^n\times\{0\}$, which
are a little stronger than assumptions of Theorem \ref{T101a}. The
key difference is that the proof approach here is different from
that in \cite{Wu2}.

(2). As mentioned Remark 1.2 in \cite{Wu2}, the time interval
$[0,T)$ of Theorem \ref{T101a} here may be not the maximal time
interval of existence of the unnormalized Ricci flow. In fact if we
trace (\ref{tiaojian1}) and assume that $p<n$, then we have an upper
bound estimate for the scalar curvature $(\epsilon\neq 0)$. But as
we all known, curvature operator must be blow-up as $t\rightarrow T$
$(T<\infty)$ when the curvature operator is positive and $[0,T)$ is
the maximal time interval (see Theorem 14.1 in \cite{Hamilton}).

(3). Theorem \ref{T101a} still holds if the conditions
(\ref{tiaojian1}) and (\ref{tiaojian2}) are replaced by
$R_{ij}-\tfrac{R}{p}g_{ij}>-\epsilon g_{ij}$ in  $M^n\times[0,T)$
and $R\geq p\cdot\epsilon$ in $M^n\times\{0\}$.

(4). For any closed $2$-surface and $3$-manifold, we can relax the
above assumptions (\ref{tiaojian1}) and (\ref{tiaojian2}) to the
only initial curvature assumptions by the Hamilton's maximum
principle. We refer the reader to \cite{Wu2} for similar results.
\end{remark}

\begin{remark}
Most recently, in \cite{CHL} X.-D. Cao, S.-B. Hou and J. Ling
derived a monotonicity formula for the first eigenvalue of
$-\Delta+aR$ $(0<a\leq 1/2)$ on closed surfaces with nonnegative
scalar curvature under the Ricci flow. Meanwhile they obtained
various monotonicity formulae and estimates for the first eigenvalue
on closed surfaces.
\end{remark}

Furthermore, if less curvature assumptions are given, we can
construct two classes of monotonic (increasing and decreasing)
quantities about the first eigenvalue of the $p$-Laplace operator
along the unnormalized Ricci flow. We refer the reader to Section
\ref{sec3h} for the more detailed discussions (see Theorems
\ref{T10b} and \ref{norma1}, and Corollary \ref{T103aa}).

\vspace{0.5em}

For the normalized Ricci flow, unfortunately we may not get any
monotonicity for the first eigenvalue of the $p$-Laplace operator in
general. However, if we know the first $p$-eigenvalue
differentiability along the unnormalized Ricci flow, from the
relation to the unnormalized Ricci flow, we can give another way to
derive the first $p$-eigenvalue differentiability along the
normalized Ricci flow (see Theorem \ref{T100b} of Section
\ref{sec3b}).

Besides, the most important result is that we can construct various
monotonic quantities about the first eigenvalue of the $p$-Laplace
operator along the normalized Ricci flow on closed $2$-surfaces
without any curvature assumption. This also leads to the first
$p$-eigenvalue differentiability along the normalized Ricci flow on
closed $2$-surfaces without any curvature assumption.

\begin{theorem}\label{coro17}
Let $\tilde{g}(\tilde{t})$, $\tilde{t}\in[0,\infty)$, be a solution
of the normalized Ricci flow (\ref{norflow}) on a closed surface
$M^2$ and let $\lambda_{1,p}(\tilde{t})$ be the first eigenvalue of
the $p$-Laplace operator of the metric $\tilde{g}(\tilde{t})$. Then
each of the following quantities

\begin{enumerate}
  \item $\lambda_{1,p}(\tilde{t})\cdot\left(\frac{\rho_0}{\tilde{r}}
-\frac{\rho_0}{\tilde{r}}e^{\tilde{r}\tilde{t}}
+e^{\tilde{r}\tilde{t}}\right)^{p/2}$
\quad\quad\quad\quad\quad\quad$(p\geq2)$,\\
$\lambda_{1,p}(\tilde{t}){\cdot}\kern-3pt\left(\frac{\rho_0}{\tilde{r}}
{-}\frac{\rho_0}{\tilde{r}}e^{\tilde{r}\tilde{t}}
{+}e^{\tilde{r}\tilde{t}}\right){\cdot}\exp\left[\left(1{-}\frac
p2\right)\kern-2pt\frac{C}{\tilde{r}}e^{\tilde{r}\tilde{t}}\right]$\quad
$(1<p<2)$,\quad\,\,\, $\mathrm{if}$ $\chi(M^2)<0$;

  \item $\lambda_{1,p}(\tilde{t})\cdot\left(1+C\tilde{t}\right)^{p/2}$
\quad\quad\quad\quad\quad\quad\quad\quad\quad\quad $(p\geq2)$,\\
$\lambda_{1,p}(\tilde{t})\cdot\left(1+C\tilde{t}\right)\cdot
e^{\left(1{-}p/2\right)C\tilde{t}}$
\quad\quad\quad\quad\quad\quad\quad $(1<p<2)$, \quad\, $\mathrm{if}$
$\chi(M^2)=0$;

  \item $\ln\lambda_{1,p}(\tilde{t})+\frac p2\cdot\left(\frac {C}{\tilde{r}}
e^{\tilde{r}\tilde{t}}+\tilde{r}\tilde{t}\right)$
\quad\quad\quad\quad\quad\quad\quad $(p\geq2)$,\\
$\ln\lambda_{1,p}(\tilde{t})+\left(2-\frac p2\right)\frac
{C}{\tilde{r}}e^{\tilde{r}\tilde{t}}+\tilde{r}\tilde{t}$
\quad\quad\quad\quad\quad\quad $(1<p<2)$, \quad\,\, $\mathrm{if}$
$\chi(M^2)>0$
\end{enumerate}
is increasing and therefore $\lambda_{1,p}(\tilde{t})$ is
differentiable almost everywhere along the normalized Ricci flow on
$[0,\infty)$, where $\chi(M^2)$ denotes its Euler characteristic,
$\rho_0:=\inf_{M^2}R(0)$ and $C>0$ is a constant depending only on
the initial metric.
\end{theorem}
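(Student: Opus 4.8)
The plan is to handle the three cases $\chi(M^2)<0,\,=0,\,>0$ by a single mechanism: in each case I will produce a pointwise differential inequality of the form $\tfrac{d}{d\tilde t}\log\lambda_{1,p}(\tilde t)\ge\Psi'(\tilde t)$, where $\Psi$ is minus the logarithm of the claimed modifying factor, and then upgrade it to genuine monotonicity of the listed quantity. Once monotonicity is in hand the almost-everywhere differentiability of $\lambda_{1,p}$ is automatic: a monotone function is differentiable a.e.\ by Lebesgue's theorem, and since each modifying factor is smooth and nowhere zero on $[0,\infty)$, dividing it out (or exponentiating, in the additive-logarithm case (3)) shows that $\lambda_{1,p}(\tilde t)$ itself is differentiable a.e. Throughout I use that on a surface $\tilde R_{ij}=\tfrac{\tilde R}{2}\tilde g_{ij}$, so the normalized flow reads $\partial_{\tilde t}\tilde g_{ij}=(\tilde r-\tilde R)\tilde g_{ij}$, that by Gauss--Bonnet together with volume preservation $\tilde r$ is a constant with the same sign as $\chi(M^2)$, and the continuity of $\lambda_{1,p}$ along the flow established earlier.

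For the derivative I freeze a Rayleigh quotient. Fix $\tilde t_0$ and let $f$ be a first $p$-eigenfunction at $\tilde t_0$, normalized so that $\int_M|f|^p\,d\tilde\mu=1$ there. Keeping $f$ fixed in space while the metric evolves, the two basic variations $\partial_{\tilde t}|\nabla f|^p=-\tfrac p2(\tilde r-\tilde R)|\nabla f|^p$ and $\partial_{\tilde t}\,d\tilde\mu=(\tilde r-\tilde R)\,d\tilde\mu$ give, for $\lambda(\tilde t)=\int_M|\nabla f|^p\,d\tilde\mu\big/\int_M|f|^p\,d\tilde\mu$,
\[
\frac{d}{d\tilde t}\log\lambda\Big|_{\tilde t_0}
=-\tfrac p2\,\tilde r-\Bigl(1-\tfrac p2\Bigr)\frac{\int_M \tilde R\,|\nabla f|^p\,d\tilde\mu}{\int_M|\nabla f|^p\,d\tilde\mu}+\int_M \tilde R\,|f|^p\,d\tilde\mu .
\]
Both curvature integrals are weighted averages of $\tilde R$ at time $\tilde t_0$, hence lie between $\min_M\tilde R$ and $\max_M\tilde R$.

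Next I insert curvature bounds and split on $p$. From the surface evolution $\partial_{\tilde t}\tilde R=\Delta\tilde R+\tilde R(\tilde R-\tilde r)$ and the scalar maximum principle one gets $\tilde R(\tilde t)\ge u(\tilde t)$, where $u'=u(u-\tilde r)$, $u(0)=\rho_0$, so that $u(\tilde t)=\rho_0\big/\bigl(\tfrac{\rho_0}{\tilde r}(1-e^{\tilde r\tilde t})+e^{\tilde r\tilde t}\bigr)$ for $\tilde r\ne0$ and $u(\tilde t)=\rho_0/(1-\rho_0\tilde t)$ for $\tilde r=0$; in either case $u'/u=u-\tilde r$ and the displayed factor $\tfrac{\rho_0}{\tilde r}(1-e^{\tilde r\tilde t})+e^{\tilde r\tilde t}$ equals $\rho_0/u$. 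When $p\ge2$ the coefficient $1-\tfrac p2\le0$, so replacing both averages by $u$ yields $\tfrac{d}{d\tilde t}\log\lambda|_{\tilde t_0}\ge\tfrac p2(u-\tilde r)=\tfrac p2\,u'/u$, which is exactly $\tfrac p2\tfrac{d}{d\tilde t}\log|\,\rho_0/u\,|^{-1}$; this is the differential inequality making the first quantity in (1) increasing, and its $\tilde r=0$ and $\tilde r>0$ specializations give (2) and (3). When $1<p<2$ the coefficient is positive, so the $|\nabla f|^p$-average must also be bounded above; here I invoke the companion maximum-principle estimate $\tilde R\le\tilde r+Ce^{\tilde r\tilde t}$ with $C>0$ depending only on $g_0$, which is precisely what produces the extra factor $\exp[(1-\tfrac p2)\tfrac{C}{\tilde r}e^{\tilde r\tilde t}]$.

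Finally I convert the derivative estimate into monotonicity. Freezing $f$ gives $\lambda_{1,p}(\tilde t)\le\lambda(\tilde t)$ for all $\tilde t$, with equality at $\tilde t_0$; multiplying by the positive smooth factor $\phi$, the listed quantity $Q=\lambda_{1,p}\phi$ satisfies $Q\le\lambda\phi$ with equality at $\tilde t_0$, so the lower left Dini derivative obeys $D_-Q(\tilde t_0)\ge(\lambda\phi)'(\tilde t_0)\ge0$ by the computation above. Since $\tilde t_0$ is arbitrary and $\lambda_{1,p}$ is continuous, a continuous function whose lower Dini derivative is everywhere nonnegative is nondecreasing, so each $Q$ is increasing, and the a.e.\ differentiability of $\lambda_{1,p}$ follows as explained. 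The points I expect to be most delicate are, first, justifying the frozen-eigenfunction variation rigorously given that $p$-eigenfunctions are only $C^{1,\alpha}$ and $|\nabla f|$ may vanish, so that the pointwise identities hold after integration; second, securing the upper estimate $\tilde R\le\tilde r+Ce^{\tilde r\tilde t}$ with $C$ depending only on the initial metric, which is what the $1<p<2$ cases genuinely require; and third, the sign bookkeeping in the barrier/Dini-derivative step, where one must use the left difference quotient so that the inequality opens toward monotonicity rather than merely bounding the derivative from above.
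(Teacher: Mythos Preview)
Your variation formula and the Dini-derivative mechanism are both fine, but the barrier step has a genuine gap. You need $\lambda_{1,p}(\tilde t)\le\lambda(\tilde t)$ for $\tilde t$ near $\tilde t_0$ in order to conclude $D_-Q(\tilde t_0)\ge(\lambda\phi)'(\tilde t_0)$, and for the Rayleigh quotient $\lambda(\tilde t)$ to be an upper bound you need the test function to satisfy the mean-zero constraint $\int_M|f|^{p-2}f\,d\tilde\mu_{\tilde g(\tilde t)}=0$. With $f$ frozen in space this fails as soon as the volume form moves: the constraint holds only at $\tilde t_0$. Without it the Rayleigh quotient can drop below $\lambda_{1,p}$ (constants give zero), so the inequality $Q\le\lambda\phi$ away from $\tilde t_0$ is not available and the Dini comparison collapses. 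This is precisely why the paper does not freeze $f$: it builds a time-dependent $f(t)$ via $h(t)=f_{t_0}\bigl[\det g(t_0)/\det g(t)\bigr]^{1/(2(p-1))}$ and an $L^p$-normalization, a construction tailored so that $\int|f(t)|^{p-2}f(t)\,d\mu_{g(t)}=0$ for all $t$, and then integrates the smooth function $t\mapsto\int|df(t)|^p\,d\mu$ on $[t_1,t_0]$ to compare $\lambda_{1,p}(t_0)$ with $\lambda_{1,p}(t_1)$.

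There is an easy repair that stays close to your argument: replace the frozen $f$ by $f+c(\tilde t)$, where $c(\tilde t)$ is the unique constant (from the strict convexity of $s\mapsto\int|f+s|^p\,d\tilde\mu$) making $\int|f+c|^{p-2}(f+c)\,d\tilde\mu=0$; then $c(\tilde t_0)=0$, $c$ is smooth by the implicit function theorem, and the Rayleigh quotient of $f+c(\tilde t)$ is a legitimate upper barrier for $\lambda_{1,p}$. A pleasant computation shows that the extra term $c'(\tilde t_0)$ contributes $p\,c'(\tilde t_0)\int|f|^{p-2}f\,d\tilde\mu=0$ to the derivative of the denominator, so your displayed formula for $\tfrac{d}{d\tilde t}\log\lambda|_{\tilde t_0}$ is unchanged and the rest of your plan goes through. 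Two smaller remarks: the regularity worry you flag is not the real obstacle (for a conformal evolution $\partial_{\tilde t}|\nabla f|^p=-\tfrac p2(\tilde r-\tilde R)|\nabla f|^p$ holds pointwise even where $\nabla f=0$); and your claim that the $\tilde r=0$ and $\tilde r>0$ cases are ``specializations'' of the ODE barrier $u$ is imprecise---for $\chi(M^2)\ge0$ the quantities in the statement come from the coarser bounds $R\ge-\tfrac{C}{1+Ct}$ and $R\ge -Ce^{\tilde r\tilde t}$ of the surface estimate, not from $u$.
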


In the same way, we can also obtain the decreasing quantities on
closed $2$-surfaces.
\begin{theorem}\label{thm19}
Under the same assumptions as in Theorem \ref{coro17}, then each of
the following quantities
\begin{enumerate}
  \item $\ln\lambda_{1,p}(\tilde{t})-\frac p2\cdot\frac {C}{\tilde{r}}
e^{\tilde{r}\tilde{t}}$
\quad\quad\quad\quad\quad\quad\quad\quad\quad\quad\,$(p\geq2)$,\\
$\lambda_{1,p}(\tilde{t}){\cdot}\kern-3pt\left(\frac{\rho_0}{\tilde{r}}
{-}\frac{\rho_0}{\tilde{r}}e^{\tilde{r}\tilde{t}}
{+}e^{\tilde{r}\tilde{t}}\right)^{\kern-2pt(\frac p2-1)}
\kern-6pt{\cdot}
\exp\kern-2pt\left({-}\frac{C}{\tilde{r}}e^{\tilde{r}\tilde{t}}\right)$
\quad $(1<p<2)$,\quad\quad$\mathrm{if}$ $\chi(M^2)<0$;

  \item $\ln\lambda_{1,p}(\tilde{t})-\frac p2\cdot C\tilde{t}$
\quad\quad\quad\quad\quad\quad\quad\quad\quad\quad\quad$(p\geq2)$,\\
$\lambda_{1,p}(\tilde{t})\cdot\left(1+C\tilde{t}\right)^{(\frac
p2-1)}\cdot
e^{-C\tilde{t}}$\quad\quad\quad\quad\quad\quad\quad$(1<p<2)$
\quad\quad $\mathrm{if}$ $\chi(M^2)=0$;

  \item $\ln\lambda_{1,p}(\tilde{t})-\frac p2\cdot\frac {C}{\tilde{r}}
e^{\tilde{r}\tilde{t}}$
\quad\quad\quad\quad\quad\quad\quad\quad\quad\quad$(p\geq2)$,\\
$\ln\lambda_{1,p}(\tilde{t}){-}\left(2{-}\frac
p2\right)\frac{C}{\tilde{r}}{\cdot}e^{\tilde{r}\tilde{t}}{-}
\left(1{-}\frac p2\right)\tilde{r}\tilde{t}$\quad\quad\quad\quad
$(1<p<2)$\quad\quad\,\,$\mathrm{if}$ $\chi(M^2)>0$
\end{enumerate}
is decreasing and therefore $\lambda_{1,p}(\tilde{t})$ is
differentiable almost everywhere along the normalized Ricci flow on
$[0,\infty)$, where $\chi(M^2)$, $\rho_0$ and $C$ are as in Theorem
\ref{coro17}.
\end{theorem}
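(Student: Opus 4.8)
The plan is to run the argument of Theorem~\ref{coro17} verbatim and merely reverse the direction in which the curvature estimates enter. First I would record the evolution identity for the first $p$-eigenvalue along the normalized flow. Normalizing the eigenfunction by $\int_M|f|^p\,d\tilde\mu=1$, so that $\int_M|\nabla f|^p\,d\tilde\mu=\lambda_{1,p}$, and using that on a surface $\tilde R_{ij}=\tfrac{\tilde R}{2}\tilde g_{ij}$, a direct differentiation of the Rayleigh quotient --- combining $\partial_{\tilde t}\,d\tilde\mu=(\tilde r-\tilde R)\,d\tilde\mu$, $\partial_{\tilde t}\tilde g^{ij}=(\tilde R-\tilde r)\tilde g^{ij}$ and the cancellation of the $\partial_{\tilde t}f$ terms via the eigenvalue equation $-\operatorname{div}(|\nabla f|^{p-2}\nabla f)=\lambda_{1,p}|f|^{p-2}f$ --- yields
\[
\frac{d}{d\tilde t}\lambda_{1,p}=\Bigl(\tfrac p2-1\Bigr)\int_M(\tilde R-\tilde r)\,|\nabla f|^p\,d\tilde\mu+\lambda_{1,p}\int_M(\tilde R-\tilde r)\,|f|^p\,d\tilde\mu .
\]
This is exactly the identity behind Theorem~\ref{coro17}; the three families in Theorem~\ref{thm19} arise by feeding it the \emph{opposite} one of the surface curvature bounds.

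Next I would collect those bounds from the scalar maximum principle applied to $\partial_{\tilde t}\tilde R=\tilde\Delta\tilde R+\tilde R(\tilde R-\tilde r)$ (here $\tilde r$ is flow-invariant with $\operatorname{sign}\tilde r=\operatorname{sign}\chi(M^2)$, by Gauss--Bonnet). Letting $\phi$ solve $\phi'=\tilde r\phi-\rho_0$, $\phi(0)=1$ --- so $\phi=\tfrac{\rho_0}{\tilde r}(1-e^{\tilde r\tilde t})+e^{\tilde r\tilde t}$, the very factor in the statement --- one checks that $\rho_0/\phi$ solves the comparison equation $\psi'=\psi(\psi-\tilde r)$, whence $\tilde R-\tilde r\geq \rho_0/\phi-\tilde r=-\phi'/\phi$; Hamilton's convergence estimates on surfaces supply a constant $C>0$, depending only on $g_0$, with $\tilde R-\tilde r\leq Ce^{\tilde r\tilde t}$ (and, when $\chi(M^2)>0$, a matching lower bound $\tilde R\geq -Ce^{\tilde r\tilde t}$). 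The degenerate case $\tilde r=0$, i.e. $\chi(M^2)=0$, is the limit in which $\tfrac C{\tilde r}(e^{\tilde r\tilde t}-1)$ becomes $C\tilde t$ and $\phi$ becomes $1+C\tilde t$.

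The computation then splits on the sign of $\tfrac p2-1$. For $p\ge2$ both coefficients in the identity are nonnegative, so inserting the upper bound $\tilde R-\tilde r\le Ce^{\tilde r\tilde t}$ into each integral gives $\tfrac{d}{d\tilde t}\ln\lambda_{1,p}\le \tfrac p2 Ce^{\tilde r\tilde t}$; since $\tfrac p2\tfrac C{\tilde r}e^{\tilde r\tilde t}$ is an antiderivative of the right-hand side, $\ln\lambda_{1,p}-\tfrac p2\tfrac C{\tilde r}e^{\tilde r\tilde t}$ is decreasing, which is items (1) and (3) for $p\ge2$ (and item (2) under $\tilde r\to0$). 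For $1<p<2$ the factor $\tfrac p2-1$ is negative, so I would instead insert the relevant lower curvature bound into the first integral and the upper bound $Ce^{\tilde r\tilde t}$ into the second. For $\chi(M^2)\le0$ the sharp lower bound $-\phi'/\phi$ produces $\tfrac{d}{d\tilde t}\ln\lambda_{1,p}\le(1-\tfrac p2)\tfrac{\phi'}{\phi}+Ce^{\tilde r\tilde t}$, which is, up to sign, the logarithmic derivative of $\phi^{p/2-1}\exp(-\tfrac C{\tilde r}e^{\tilde r\tilde t})$, so $\lambda_{1,p}\,\phi^{p/2-1}\exp(-\tfrac C{\tilde r}e^{\tilde r\tilde t})$ is decreasing, giving items (1) and (2); for $\chi(M^2)>0$ the weaker lower bound $\tilde R\ge-Ce^{\tilde r\tilde t}$ is used instead, producing the additive expression of item (3), and the remaining sign combinations are identical bookkeeping. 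Since each displayed quantity is monotone and is built from $\lambda_{1,p}(\tilde t)$ by a smooth positive factor or by adding a smooth function to $\ln\lambda_{1,p}(\tilde t)$, and since a monotone function is differentiable almost everywhere (Lebesgue), $\lambda_{1,p}(\tilde t)$ is itself differentiable almost everywhere.

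The one genuine difficulty is that the differentiation producing the identity above is not literally valid: the first $p$-eigenfunction is only $C^{1,\alpha}$ and need not vary smoothly in $\tilde t$, so a priori $\lambda_{1,p}(\tilde t)$ is merely continuous. I would handle this exactly as in Theorem~\ref{coro17}: fix $\tilde t_0$, take the eigenfunction $f_0$ there, propagate it to a family $f(\tilde t)$ smooth in $\tilde t$ with $\int_M|f(\tilde t)|^p\,d\tilde\mu=1$, and set $\mu(\tilde t)$ equal to its Rayleigh quotient, so that $\mu\ge\lambda_{1,p}$ with equality at $\tilde t_0$. Then $\mu$ is genuinely differentiable, its derivative is given by the formula above (the $\partial_{\tilde t}f$ terms still cancel), and the touching at $\tilde t_0$ converts the bound on $\mu'(\tilde t_0)$ into a one-sided Dini-derivative bound for $\lambda_{1,p}$. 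A continuous function whose upper right Dini derivative obeys the stated inequality everywhere is monotone, which is all the argument actually needs; no honest time-differentiability of the eigenfunction is required.
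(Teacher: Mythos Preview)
Your proposal is correct and follows essentially the same route as the paper: the evolution identity you write is exactly (\ref{kkdd5}) of Lemma~\ref{genlemma} rewritten in terms of $\tilde R-\tilde r$, the curvature inputs are precisely (\ref{RRRk}) together with Proposition~\ref{procurest}, and your smooth touching family $\mu(\tilde t)$ with the Dini-derivative argument is the same device as the paper's smooth eigenvalue function $\lambda_{1,p}(f,t)$ integrated over $[t_0,t_1]$. One small point to tighten: when you ``propagate $f_0$ to a family $f(\tilde t)$'' you must also arrange $\int_M|f(\tilde t)|^{p-2}f(\tilde t)\,d\tilde\mu=0$ for all $\tilde t$, not just the $L^p$-normalization, since otherwise $\mu(\tilde t)\ge\lambda_{1,p}(\tilde t)$ can fail; the paper secures this via the explicit construction (\ref{constr1})--(\ref{constr12}), which you are implicitly invoking by deferring to Theorem~\ref{coro17}.
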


\begin{remark}
We may apply similar techniques above to obtain interesting
monotonic quantities about the first eigenvalue of the $p$-Laplace
operator along the normalized Ricci flow in high-dimensional cases
under some curvature assumptions, but the proof needs more
computing. Here we omit this aspect.
\end{remark}

Some parts of results for $p=2$ above were proved by L. Ma \cite{Ma}
and J. Ling \cite{Ling2}. But our method of proof is different from
theirs. Their proofs strongly depend on the differentiability for
the eigenvalues and the corresponding eigenfunctions. But in our
setting ($p\geq 2$) it is not clear whether the eigenvalue or the
corresponding eigenfunction is differentiable in advance. Our method
is similar to X.-D. Cao's trick in \cite{Cao1}, which does not
depend on the differentiability for the eigenvalues or the
corresponding eigenfunctions.

\vspace{0.5em}

With the help of Theorem \ref{coro17}, our below topic is to extend
an earlier J. Ling's result for $p=2$ (see \cite{Ling}). Here we
call it $p$-eigenvalue comparison-type theorem. For the convenience
of introducing our result, we shall state a well-known fact, which
was proved by R.S. Hamilton and B. Chow (see also \cite{Chow-Knopf},
chapter 5 for details).

\vspace{0.5em}

\textbf{Theorem E.} (Chow-Hamilton, \cite{Chowjdg} and
\cite{Hamilton2}) \emph{If $(M^2, g)$ is a closed surface, there
exists a unique solution $g(t)$ of the normalized Ricci flow
(\ref{norflow}). The solution exists for all the time. As
$t\rightarrow\infty$, the metrics $g(t)$ converge uniformly in any
$C^k$-norm to a smooth metric $\bar{g}(=g(\infty))$ of constant
curvature.}

\vspace{0.5em}

Let $(M^2,g)$ be a closed surface. Let $K_g$, $\kappa_g$, $\mathrm
{Area}_g(M^2)$ denote the Gauss curvature, the minimum of the Gauss
curvature, the area of the surface $M^2$, respectively.
$\lambda_{1,p}(g)$ denotes the first eigenvalue of the $p$-Laplace
operator with respect to the metric $g$. Then we prove that
\begin{theorem}\label{T201}
($p$-eigenvalue comparison-type theorem). Suppose that $(M^2,g)$ is
a closed surface with its Euler characteristic $\chi(M^2)<0$. The
Ricci flow with initial metric $g$ converges uniformly to a smooth
metric $\bar{g}$ of constant curvature. Then for any $p\geq 2$,
\begin{equation}
\frac{\lambda_{1,p}(\bar{g})}{\lambda_{1,p}(g)}\geq\left(\frac
{\kappa_{\bar{g}}}{\kappa_g}\right)^{p/2}
\end{equation}
and the constant Gauss curvature for metric $\bar{g}$ is
$\kappa_{\bar{g}}=2\pi \chi(M^2)/\mathrm {Area}_g(M^2)$.
\end{theorem}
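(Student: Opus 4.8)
The plan is to read off the inequality from the monotonicity already established in Theorem \ref{coro17}, by evaluating the relevant monotone quantity at the two endpoints $\tilde t=0$ and $\tilde t=\infty$ of the normalized flow and using the convergence in Theorem E to identify the limit. First I would pin down the geometric constants. Since $\chi(M^2)<0$, Gauss--Bonnet gives $\int_{M^2}K_g\,d\mu=2\pi\chi(M^2)<0$, so $K_g$ is somewhere negative and $\kappa_g=\min_{M^2}K_g<0$. The normalized flow preserves area, and by Theorem E the limit metric $\bar g$ has constant Gauss curvature; applying Gauss--Bonnet to $\bar g$ yields $\kappa_{\bar g}\,\mathrm{Area}_{\bar g}(M^2)=2\pi\chi(M^2)$, and since $\mathrm{Area}_{\bar g}(M^2)=\mathrm{Area}_g(M^2)$ this gives $\kappa_{\bar g}=2\pi\chi(M^2)/\mathrm{Area}_g(M^2)<0$, proving the second assertion. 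In dimension two $R=2K$, so $\rho_0=\inf_{M^2}R(0)=2\kappa_g<0$, while the (time-constant) average scalar curvature is $\tilde r=4\pi\chi(M^2)/\mathrm{Area}_g(M^2)=2\kappa_{\bar g}<0$; in particular $\rho_0/\tilde r=\kappa_g/\kappa_{\bar g}>0$.

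Next I would invoke the increasing quantity from Theorem \ref{coro17} in the case $\chi(M^2)<0$, $p\geq2$, namely
$$Q(\tilde t):=\lambda_{1,p}(\tilde t)\cdot F(\tilde t)^{p/2},\qquad F(\tilde t):=\frac{\rho_0}{\tilde r}-\frac{\rho_0}{\tilde r}e^{\tilde r\tilde t}+e^{\tilde r\tilde t},$$
which is nondecreasing on $[0,\infty)$. Because $\tilde r<0$ forces $e^{\tilde r\tilde t}\in(0,1]$, we may write $F(\tilde t)=\tfrac{\rho_0}{\tilde r}(1-e^{\tilde r\tilde t})+e^{\tilde r\tilde t}$, a sum of a nonnegative and a positive term, so $F(\tilde t)>0$ throughout and the power $F(\tilde t)^{p/2}$ is well defined. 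At the initial time $F(0)=1$, whence $Q(0)=\lambda_{1,p}(g)$, and as $\tilde t\to\infty$ we have $e^{\tilde r\tilde t}\to0$, so $F(\tilde t)\to\rho_0/\tilde r$.

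Finally I would pass to the limit. By Theorem E the metrics $\tilde g(\tilde t)$ converge in every $C^k$-norm to $\bar g$, and the continuity of the first $p$-eigenvalue with respect to the metric (established earlier in the paper) gives $\lambda_{1,p}(\tilde t)\to\lambda_{1,p}(\bar g)$. Hence $\lim_{\tilde t\to\infty}Q(\tilde t)=\lambda_{1,p}(\bar g)\,(\rho_0/\tilde r)^{p/2}$, and monotonicity yields $\lambda_{1,p}(\bar g)\,(\rho_0/\tilde r)^{p/2}\geq Q(0)=\lambda_{1,p}(g)$. Dividing and substituting $\tilde r/\rho_0=\kappa_{\bar g}/\kappa_g$ produces $\lambda_{1,p}(\bar g)/\lambda_{1,p}(g)\geq(\kappa_{\bar g}/\kappa_g)^{p/2}$, as claimed.

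I expect the main obstacle to be the justification of the limit as $\tilde t\to\infty$: a priori the monotone quantity $Q$ has only a (possibly infinite) monotone limit, so one must argue that the factor $F(\tilde t)$ converges to the finite positive constant $\rho_0/\tilde r$ and, crucially, that $\lambda_{1,p}(\tilde t)\to\lambda_{1,p}(\bar g)$. This last convergence is precisely where the continuity of $\lambda_{1,p}$ under $C^k$-convergence of metrics, combined with Theorem E, does the work; the remaining sign bookkeeping ($\rho_0<0$, $\tilde r<0$, $\rho_0\le\tilde r$) is routine and only serves to keep all fractional powers well defined.
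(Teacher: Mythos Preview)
Your proposal is correct and follows essentially the same route as the paper: both proofs invoke the monotone quantity from Theorem~\ref{coro17} in the case $\chi(M^2)<0$, $p\ge 2$, evaluate it at $\tilde t=0$ and let $\tilde t\to\infty$ using Theorem~E and the continuity of $\lambda_{1,p}$ in the metric, and then identify $\rho_0=2\kappa_g$, $\tilde r=2\kappa_{\bar g}$ to obtain the stated ratio. If anything, your write-up is slightly more careful than the paper's in checking that $F(\tilde t)>0$ so that the fractional power is well defined, and in spelling out why $\lambda_{1,p}(\tilde t)\to\lambda_{1,p}(\bar g)$.
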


In conclusion, our new contribution of this paper is to obtain the
monotonicity for the first eigenvalue of the $p$-Laplace operator,
and construct many monotonic quantities involving the first
eigenvalue of the $p$-Laplace operator along the Ricci flow under
some different curvature assumptions. By the monotonic property, we
can judge the differentiability in some sense for the first
eigenvalue of a nonlinear operator with respect to evolving metrics.
Using the same idea of our arguments, we easily see that Perelman's
eigenvalue is differentiable almost everywhere\footnote{Note that
many literatures have pointed out that the differentiability for
Perelman's eigenvalue follows from eigenvalue perturbation theory
(see also Section 2).}. From Theorem \ref{coro17} above and
Corollary \ref{T103ab} below, we also see that the first eigenvalue
of the $p$-Laplace operator is differentiable almost everywhere
along the Ricci flow on closed $2$-surfaces without any curvature
assumption. For high-dimensional case, the similar differentiability
property still holds as long as some curvature conditions are
satisfied. Of course, the proofs of these results involve many
skilled arguments and computations. Finally, it should be remarked
that it is still an open question whether its corresponding
eigenfunction is differentiable with respect to $t$-variable along
the Ricci flow.

The rest of this paper is organized as follows. In Section
\ref{Pre-Cont}, we will recall some notations about $p$-Laplace, and
prove that $\lambda_{1,p}(g(t))$ is a continuous function along the
Ricci flow. In Section \ref{sec3}, we will give Proposition
\ref{propo1}.  Using this proposition, we can finish the proof of
Theorem \ref{T101a}. In Section \ref{sec3h}, we will construct two
classes of monotonic quantities about the first eigenvalue of the
$p$-Laplace operator along the unnormalized Ricci flow. In Section
\ref{sec3b}, we will discuss the normalized Ricci flow case and
mainly prove Theorems \ref{coro17} and \ref{thm19}. In Section
\ref{comparison}, we shall prove $p$-eigenvalue comparison-type
theorem, i.e., Theorem \ref{T201}. In Section \ref{genmetr}, we will
use the same method to study the first eigenvalue of the $p$-Laplace
with respect to general evolving metrics, especially to the Yamabe
flow.

\section{Preliminaries}\label{Pre-Cont}
In this section, we will first recall some definitions about the
$p$-Laplace operator and give the definition for the first
eigenvalue of the $p$-Laplace operator under the Ricci flow on a
closed manifold. Then we will show that the first eigenvalue of the
$p$-Laplace operator is a continuous function along the Ricci flow.

Let $M^n$ be an $n$-dimensional connected closed Riemannian manifold
and $g(t)$ be a smooth solution of the Ricci flow on the time
interval $[0,T)$. Consider the nonzero first eigenvalue of the
$p$-Laplace operator $(p>1)$ at time $t$ (also called the first
$p$-eigenvalue), where $0\leq t<T$, i.e.,
\begin{equation}
\lambda_{1,p}(t):={\inf\limits_{f\neq 0}}\left\{\frac{\int_M |df|^p
d\mu}{\int_M |f|^pd\mu}:f\in W^{1,p}(M), \quad \int_M
|f|^{p-2}fd\mu=0 \right\}.
\end{equation}
Obviously, this infimum does not change when $W^{1,p}(M)$ is
replaced by $C^{\infty}(M)$. For the fixed time, this infimum is
achieved by a $C^{1,\alpha}$ ($0<\alpha<1$) eigenfunction $f_p$ (see
\cite{Serrin} and \cite{Tolk}).  The corresponding eigenfunction
$f_p$ satisfies the following Euler-Lagrange equation
\begin{equation}\label{elag}
\Delta_p f_p =-\lambda_{1,p}(t)|f_p|^{p-2}f_p,
\end{equation}
where $\Delta_p$ $(p>1)$ is the $p$-Laplace operator with respect to
$g(t)$, given by
\begin{equation}
\Delta_{p_{g(t)}}f:=\mathrm{div}_{g(t)}\left({|df|_{g(t)}^{p-2}}
df\right).
\end{equation}
If $p=2$, the $p$-Laplace operator reduces to the Laplace-Beltrami
operator. The most difference between two operators is that the
$p$-Laplace operator is a nonlinear operator in general, but the
Laplace-Beltrami operator is a linear operator.

Note that it is not clear whether the first eigenvalue of the
$p$-Laplace operator or its corresponding eigenfunction is
$C^1$-differentiable along the Ricci flow. When $p=2$, where
$\Delta_p$ is the Laplace-Beltrami operator, many papers have
pointed out that their differentiability follows from eigenvalue
perturbation theory (for example, see \cite{Cao}, \cite{Kato},
\cite{KL} and \cite{Re-Si}). But $p\neq2$, as far as we are aware,
the differentiability for the first eigenvalue of the $p$-Laplace
operator or its corresponding eigenfunction along the Ricci flow has
not been known until now. Even we have not known whether they are
locally Lipschitz. So we can not use the method used by L. Ma to
derive the monotonicity for the first eigenvalue of the $p$-Laplace
operator.

\vspace{0.5em}

Although we do not know the differentiability for
$\lambda_{1,p}(t)$, we will see that $\lambda_{1,p}(g(t))$ in fact
is a continuous function along the Ricci flow on $[0,T)$.  This is a
consequence of the following elementary result.

\begin{theorem}\label{conti}
If $g_1$ and $g_2$ are two metrics which satisfy
\[
(1+\varepsilon)^{-1}g_1\leq g_2\leq(1+\varepsilon)g_1,
\]
then for any $p>1$, we have
\begin{equation}\label{concon}
(1+\varepsilon)^{-(n+\frac
p2)}\leq\frac{\lambda_{1,p}(g_1)}{\lambda_{1,p}(g_2)}\leq
(1+\varepsilon)^{(n+\frac p2)}.
\end{equation}
In particular, $\lambda_{1,p}(g(t))$ is a continuous function in the
$t$-variable.
\end{theorem}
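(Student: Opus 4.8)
The plan is to compare the Rayleigh quotients of $g_1$ and $g_2$ directly, since the first eigenvalue $\lambda_{1,p}(g)$ is defined variationally as an infimum of $\int_M |df|^p\,d\mu \big/ \int_M |f|^p\,d\mu$ over admissible $f$. The key observation is that the metric controls three ingredients of this quotient: the pointwise norm $|df|_g$ of a fixed one-form, the volume form $d\mu_g$, and the subspace of admissible functions (those satisfying $\int_M |f|^{p-2}f\,d\mu_g = 0$). A two-sided bound $(1+\varepsilon)^{-1}g_1 \le g_2 \le (1+\varepsilon)g_1$ translates into uniform pointwise bounds on each of these, and the whole argument reduces to tracking how each factor scales.

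First I would record the consequences of the metric comparison. On the cotangent side, $g_2 \le (1+\varepsilon)g_1$ gives $|df|^2_{g_2} \le (1+\varepsilon)|df|^2_{g_1}$ for any function $f$ (with the reverse inequality from the other bound), so $|df|^p_{g_2} \le (1+\varepsilon)^{p/2}|df|^p_{g_1}$. For the volume forms, comparing $\sqrt{\det g_2}$ with $\sqrt{\det g_1}$ in normal coordinates and using that the eigenvalues of $g_2$ relative to $g_1$ lie in $[(1+\varepsilon)^{-1},(1+\varepsilon)]$ yields $(1+\varepsilon)^{-n/2}\,d\mu_{g_1} \le d\mu_{g_2} \le (1+\varepsilon)^{n/2}\,d\mu_{g_1}$. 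Combining these, for any fixed $f$ the numerator $\int_M |df|^p_{g_2}\,d\mu_{g_2}$ is bounded above by $(1+\varepsilon)^{p/2}(1+\varepsilon)^{n/2}\int_M |df|^p_{g_1}\,d\mu_{g_1}$, and the denominator $\int_M |f|^p\,d\mu_{g_2}$ is bounded below by $(1+\varepsilon)^{-n/2}\int_M |f|^p\,d\mu_{g_1}$. Thus the Rayleigh quotient of $f$ with respect to $g_2$ is controlled by $(1+\varepsilon)^{n + p/2}$ times its quotient with respect to $g_1$.

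The point I expect to need care is the constraint subspaces: the admissible test functions for $g_1$ and $g_2$ are defined by different orthogonality conditions, $\int_M |f|^{p-2}f\,d\mu_{g_1}=0$ versus $\int_M |f|^{p-2}f\,d\mu_{g_2}=0$, so I cannot simply plug the optimal $g_1$-eigenfunction into the $g_2$-quotient. To get around this I would take the eigenfunction $f_p$ realizing $\lambda_{1,p}(g_1)$ and modify it by subtracting a suitable constant $c$ so that $f_p - c$ satisfies the $g_2$-constraint; since $s \mapsto \int_M |f_p - s|^{p-2}(f_p - s)\,d\mu_{g_2}$ is continuous and strictly decreasing in $s$ and takes both signs, such a $c$ exists. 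Subtracting a constant does not change $df_p$, so the numerator is unaffected, and one checks that subtracting the $g_2$-mean can only decrease the denominator's departure from optimality—more precisely, $f_p - c$ remains a valid competitor whose Rayleigh quotient bounds $\lambda_{1,p}(g_2)$ from above. This gives one direction of \eqref{concon}; interchanging the roles of $g_1$ and $g_2$ (the comparison hypothesis is symmetric) gives the other.

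Finally, to deduce continuity of $t \mapsto \lambda_{1,p}(g(t))$, I would note that along the Ricci flow $g(t)$ depends smoothly on $t$, so for $t$ and $t'$ close the metrics satisfy $(1+\varepsilon)^{-1}g(t) \le g(t') \le (1+\varepsilon)g(t)$ with $\varepsilon = \varepsilon(|t-t'|) \to 0$ as $t' \to t$; feeding this into \eqref{concon} forces $\lambda_{1,p}(g(t'))/\lambda_{1,p}(g(t)) \to 1$, which is continuity. The main obstacle is the constraint-matching step, since the nonlinearity of the $p$-energy means the admissible class is not a linear subspace and one must argue via the monotonicity of the shifted constraint functional rather than by orthogonal projection; everything else is a bookkeeping exercise in the pointwise metric bounds.
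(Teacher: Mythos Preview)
Your overall approach is the paper's: compare $|df|^p$ and $d\mu$ pointwise under the two metrics, then shift the $g_1$-eigenfunction $f_p$ by a constant $c$ so that $f_p-c$ becomes admissible for the $g_2$-problem. The paper isolates the constraint-matching step as a separate lemma: for any non-constant $f$, the strictly convex function $F(s)=\int_M|f+s|^p\,d\mu_g$ has its unique minimum precisely at the $s_0$ with $\int_M|f+s_0|^{p-2}(f+s_0)\,d\mu_g=0$.

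There is one step in your outline that is stated too loosely to close the argument. After shifting you arrive at
\[
\lambda_{1,p}(g_2)\le (1+\varepsilon)^{n+p/2}\cdot\frac{\int_M|df_p|^p_{g_1}\,d\mu_{g_1}}{\int_M|f_p-c|^p\,d\mu_{g_1}},
\]
and you still need this last quotient to be at most $\lambda_{1,p}(g_1)$. Since $f_p-c$ does \emph{not} satisfy the $g_1$-constraint, this does not follow from the variational characterization of $\lambda_{1,p}(g_1)$; what is needed is the inequality $\int_M|f_p-c|^p\,d\mu_{g_1}\ge\int_M|f_p|^p\,d\mu_{g_1}$. That is exactly where the convexity lemma enters: the $g_1$-constraint on $f_p$ says $F'(0)=0$ for $F(s)=\int_M|f_p+s|^p\,d\mu_{g_1}$, so $s=0$ is the global minimum of $F$ and any shift can only increase the integral. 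Your phrase ``decrease the denominator's departure from optimality'' does not express this, and the clause you offer as clarification (``$f_p-c$ remains a valid competitor whose Rayleigh quotient bounds $\lambda_{1,p}(g_2)$ from above'') is the trivial half of the step, not the missing one. Once you make the denominator bound explicit via the strict convexity of $s\mapsto\int_M|f_p+s|^p\,d\mu_{g_1}$, your proof is complete and coincides with the paper's. (A minor slip: the cotangent inequality $|df|_{g_2}^2\le(1+\varepsilon)|df|_{g_1}^2$ follows from the lower bound $(1+\varepsilon)^{-1}g_1\le g_2$, not from $g_2\le(1+\varepsilon)g_1$; since both halves are assumed this does not affect the conclusion.)
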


To prove this theorem, we first need the following fact. Let
$(M^n,g)$ be an $n$-dimensional closed Riemannian manifold. For any
non-constant function $f$, consider the following $C^1$-function on
$s\in(-\infty,\infty)$
\[
F(s):=\int_{M^n}\left|f+s\right|^pd\mu_g,\,\,\,(p>1).
\]

\begin{lemma}\label{conti2}
There exists a unique $s_0\in(-\infty,\infty)$ such that
\begin{equation}\label{claim}
F(s_0)=\min\limits_{s\in \mathbb{R}}F(s)\,\,\,\,\,\,
\mathrm{if\,\,\, and\,\,\, only\,\,\,
if}\,\,\,\,\,\,\int_M\left|f+s_0\right|^{p-2}(f+s_0)d\mu_g=0.
\end{equation}
\end{lemma}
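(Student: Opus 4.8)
The plan is to exploit the convexity of $t\mapsto|t|^p$ and reduce everything to a statement about the one-variable function $F$. First I would record that, since $p>1$, the map $t\mapsto|t|^p$ is continuously differentiable with derivative $p|t|^{p-2}t$, bounded in modulus by $p|t|^{p-1}$. Restricting $s$ to a bounded interval and using that $f$ is finite (and, for the continuous functions arising in our applications, bounded) on the compact manifold $M^n$, the integrand and its $s$-derivative are dominated uniformly, which legitimizes differentiation under the integral sign and yields
\[
F'(s)=p\int_{M^n}|f+s|^{p-2}(f+s)\,d\mu_g.
\]
Thus the integral condition appearing in \eqref{claim} is exactly the stationarity condition $F'(s_0)=0$, and the whole lemma reduces to showing that $F$ possesses a unique critical point which is its unique global minimizer.

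Next I would establish existence of a minimizer through coercivity. For $|s|$ large we have $|f(x)+s|\to\infty$ pointwise, and a bound of the shape
\[
F(s)\ge\bigl(|s|-\sup_{M^n}|f|\bigr)^p\,\mathrm{Vol}(M^n)\longrightarrow\infty\quad(|s|\to\infty)
\]
(or, without boundedness of $f$, Fatou's lemma applied along any sequence $|s_n|\to\infty$) shows $F(s)\to\infty$ as $|s|\to\infty$. Since $F$ is continuous, indeed $C^1$, and coercive on $\mathbb{R}$, it attains a global minimum at some $s_0$; the first-order condition then forces $F'(s_0)=0$, which is the forward implication of the claimed equivalence.

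Finally I would obtain the converse implication and uniqueness simultaneously from strict convexity. For $p>1$ the function $t\mapsto|t|^p$ is strictly convex, so for $s_1\neq s_2$, $\theta\in(0,1)$, and every $x\in M^n$,
\[
\bigl|f(x)+\theta s_1+(1-\theta)s_2\bigr|^p<\theta|f(x)+s_1|^p+(1-\theta)|f(x)+s_2|^p;
\]
integrating over $M^n$ against the positive measure $d\mu_g$ shows that $F$ is strictly convex. A strictly convex $C^1$ function on $\mathbb{R}$ has at most one critical point, and by convexity every critical point is automatically a global minimizer. Hence $F'(s_0)=0$ already forces $F(s_0)=\min_{s}F(s)$, and strict convexity makes such an $s_0$ unique. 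Combining the three steps yields existence, uniqueness, and the stated equivalence.

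I expect the only point requiring genuine care to be the regime $1<p<2$. There $t\mapsto|t|^{p-2}t$ is continuous but has unbounded derivative near the origin, so one cannot argue strict convexity of $F$ from a sign condition on a second derivative. I would circumvent this by using the \emph{pointwise} strict convexity inequality for $|t|^p$ displayed above rather than any $C^2$ information, and by dominating $|f+s|^{p-1}$ uniformly on bounded $s$-intervals to justify the differentiation under the integral sign. Once these two elementary facts are in place, the remainder is the standard coercivity-plus-convexity argument for a function of a single real variable.
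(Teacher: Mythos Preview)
Your proof is correct and follows essentially the same route as the paper: both arguments compute $F'(s)=p\int_M|f+s|^{p-2}(f+s)\,d\mu_g$, observe that $F$ is coercive and strictly convex because $|t|^p$ is, and conclude existence and uniqueness of a minimizer at which $F'$ vanishes. Your write-up is simply more detailed, in particular in justifying differentiation under the integral and in handling the regime $1<p<2$ via the pointwise convexity inequality rather than a second-derivative test.
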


\begin{proof}
Note that the function $|x|^p$ $(p>1)$ is a strictly convex function
on $x\in\mathbb{R}$. Meanwhile we can also check that
\[
\lim_{|s|\rightarrow{+}\infty}F(s)\rightarrow{+}\infty,\quad\quad
F'(s)=p\int_M\left|f+s\right|^{p-2}\left(f+s\right)d\mu_g.
\]
Therefore $F(s)$ is a strictly convex function and there exists a
unique $s_0\in (-\infty,+\infty)$ such that
\begin{equation}\label{claim2}
F(s_0)=\min\limits_{s\in \mathbb{R}}F(s)\,\,\,\,\,\,
\mathrm{and}\,\,\,\,\,\,
F'(s)=p\int_M\left|f+s_0\right|^{p-2}(f+s_0)d\mu_g=0.
\end{equation}
\end{proof}

Now using Lemma \ref{conti2}, we give the proof of Theorem
\ref{conti}.
\begin{proof}[Proof of Theorem \ref{conti}]
Since the volume form $d\mu$ has degree $n/2$ in $g$, we have
\begin{equation}\label{volest}
(1+\varepsilon)^{-n/2}d\mu_{g_1}\leq d\mu_{g_2}\leq
(1+\varepsilon)^{n/2}d\mu_{g_1}.
\end{equation}
Taking $f$ be the first eigenfunction of $\Delta_p$ with respect to
the metric $g_1$, we see that
\begin{equation}
\begin{aligned}\label{xianzhi1}
\lambda_{1,p}(g_1)=\frac{\int_M|df|_{g_1}^pd\mu_{g_1}}
{\int_M|f|^pd\mu_{g_1}} \,\,\,\,\,\, \mathrm{and}\,\,\,
\int_M|f|^{p-2}fd\mu_{g_1}=0.
\end{aligned}
\end{equation}
Since $\int_M|f|^{p-2}fd\mu_{g_1}=0$, Lemma \ref{conti2} implies
\[
\int_M|f|^pd\mu_{g_1}=\min\limits_{s\in
\mathbb{R}}\int_M\left|f+s\right|^pd\mu_{g_1}.
\]
Hence by (\ref{xianzhi1}), we conclude that
\begin{equation}
\begin{aligned}\label{xianzhi2}
\lambda_{1,p}(g_1)=\frac{\int_M|df|_{g_1}^pd\mu_{g_1}}
{\int_M|f|^pd\mu_{g_1}}\geq \frac{\int_M|d(f+s)|_{g_1}^pd\mu_{g_1}}
{\int_M|f+s|^pd\mu_{g_1}}.
\end{aligned}
\end{equation}

Keep in mind that under another metric $g_2$, for function
$F(s)=\int_M\left|f+s\right|^pd\mu_{g_2}$, there exists a unique
$s_0\in (-\infty,+\infty)$ such that
\begin{equation}\label{claimmn}
F(s_0)=\min\limits_{s\in \mathbb{R}}F(t)\,\,\,\,\,\,
\mathrm{and}\,\,\,\,\,\,
F'(s)=p\int_M\left|f+s_0\right|^{p-2}(f+s_0)d\mu_{g_2}=0.
\end{equation}

Using (\ref{volest}), from (\ref{xianzhi2}) we conclude that
\begin{equation}
\begin{aligned}\label{xianzhi3}
\lambda_{1,p}(g_1)\geq\frac{\int_M|d(f+s)|_{g_1}^pd\mu_{g_1}}
{\int_M|f+s|^pd\mu_{g_1}}\geq(1+\varepsilon)^{-(n+\frac
p2)}\cdot\frac{\int_M|d(f+s)|_{g_2}^pd\mu_{g_2}}
{\int_M|f+s|^pd\mu_{g_2}}.
\end{aligned}
\end{equation}
Letting $s=s_0$ in (\ref{xianzhi3}) yields
\begin{equation}
\begin{aligned}\label{xianzhi4}
\lambda_{1,p}(g_1)\geq(1+\varepsilon)^{-(n+\frac
p2)}\cdot\frac{\int_M|d(f+s_0)|_{g_2}^pd\mu_{g_2}}
{\int_M|f+s_0|^pd\mu_{g_2}}\geq(1+\varepsilon)^{-(n+\frac
p2)}\cdot\lambda_{1,p}(g_2),
\end{aligned}
\end{equation}
where for the last inequality we used
$\int_M\left|f+s_0\right|^{p-2}(f+s_0)d\mu_{g_2}=0$ and the
definition for the first $p$-eigenvalue with respect to the metric
$g_2$.

From the course of this proof, we easily see that (\ref{xianzhi4})
still holds if we exchange $g_1$ and $g_2$. Hence
\begin{equation}
(1+\varepsilon)^{-(n+\frac
p2)}\leq\frac{\lambda_{1,p}(g_1)}{\lambda_{1,p}(g_2)}\leq
(1+\varepsilon)^{(n+\frac p2)}.
\end{equation}
This completes the proof of Theorem \ref{conti}.
\end{proof}

\section{Proof of Theorem \ref{T101a}}\label{sec3}
In this section, we will prove Theorem \ref{T101a} in introduction.
In order to achieve this, we first prove the following proposition.
Our proof involves choosing a proper smooth function, which seems to
be a delicate trick.
\begin{proposition}\label{propo1}
Let $g(t)$, $t\in[0,T)$, be a solution of the unnormalized Ricci
flow (\ref{flow}) on a closed manifold $M^n$ and let
$\lambda_{1,p}(t)$ be the first eigenvalue of the $p$-Laplace
operator along this flow. For any $t_1, t_2 \in[0,T)$ and $t_2\geq
t_1$, we have
\begin{equation}\label{frded}
\lambda_{1,p}(t_2)\geq \lambda_{1,p}(t_1)+\int^{t_2}_{t_1} \mathcal
{G}(g(\xi),f(\xi))d\xi,
\end{equation}
where
\begin{equation}\label{fracww}
\mathcal{G}(g(t),f(t)):=p\int_M|df|^{p-2}Ric(\nabla f,\nabla f)
d\mu-p\int_M\Delta_pf\frac{\partial f}{\partial t}d\mu-\int_M
|df|^pRd\mu
\end{equation}
and where $f(t)$ is any $C^{\infty}$ function satisfying
$\int_M|f|^pd\mu=1$ and $\int_M|f|^{p-2}fd\mu=0$, such that at time
$t_2$, $f(t_2)$ is the corresponding eigenfunction of
$\lambda_{1,p}(t_2)$.
\end{proposition}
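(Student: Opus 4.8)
The plan is to adapt X.-D. Cao's trick from \cite{Cao1}: instead of differentiating $\lambda_{1,p}(t)$ itself (whose differentiability is unknown), I build an explicit family of admissible test functions, smooth in $t$, that agrees with the genuine eigenfunction \emph{exactly} at the right endpoint $t_2$. Its Rayleigh quotient then bounds $\lambda_{1,p}$ from above for every $t$ while touching it at $t_2$, which is precisely what converts a differential identity into the one-sided integral inequality \eqref{frded}. Concretely, fix $t_2$ and let $f_p$ be the $C^{1,\alpha}$ eigenfunction of $\Delta_{p_{g(t_2)}}$, normalized by $\int_M |f_p|^p d\mu_{g(t_2)}=1$ and $\int_M|f_p|^{p-2}f_p\,d\mu_{g(t_2)}=0$. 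Regarding $f_p$ as a fixed function on $M$, I use Lemma \ref{conti2} to select, for each $t\in[0,T)$, the unique constant $a(t)$ with $\int_M|f_p+a(t)|^{p-2}(f_p+a(t))\,d\mu_{g(t)}=0$, and set
\[
f(t):=\frac{f_p+a(t)}{c(t)},\qquad c(t):=\left(\int_M|f_p+a(t)|^p\,d\mu_{g(t)}\right)^{1/p}.
\]
Then $\int_M|f(t)|^p d\mu_{g(t)}=1$ and $\int_M|f(t)|^{p-2}f(t)\,d\mu_{g(t)}=0$ for all $t$; since $f_p$ already satisfies both constraints at $t_2$ we have $a(t_2)=0$, $c(t_2)=1$, hence $f(t_2)=f_p$.

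The first real step is the smoothness of $a(t)$, and hence of $c(t)$ and of $t\mapsto\lambda(t)$. Applying the implicit function theorem to $G(a,t):=\int_M|f_p+a|^{p-2}(f_p+a)\,d\mu_{g(t)}$, the strict convexity in Lemma \ref{conti2} gives $\partial_a G(a(t),t)=(p-1)\int_M|f_p+a(t)|^{p-2}\,d\mu_{g(t)}>0$, so the defining relation is nondegenerate and $a(t)$ inherits the smoothness of $g(t)$. I then define
\[
\lambda(t):=\int_M|df(t)|_{g(t)}^p\,d\mu_{g(t)}.
\]
Admissibility of $f(t)$ and the variational definition of $\lambda_{1,p}$ give $\lambda(t)\geq\lambda_{1,p}(t)$ for all $t$, while $f(t_2)=f_p$ forces $\lambda(t_2)=\lambda_{1,p}(t_2)$.

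The second step is to differentiate $\lambda(t)$. Writing $|df|_g^2=g^{ij}f_if_j$ and using the Ricci flow evolutions $\partial_t g^{ij}=2R^{ij}$ and $\partial_t\,d\mu=-R\,d\mu$, I obtain
\[
\frac{d}{dt}\lambda(t)=p\int_M|df|^{p-2}Ric(\nabla f,\nabla f)\,d\mu+p\int_M|df|^{p-2}\langle\nabla\dot f,\nabla f\rangle\,d\mu-\int_M|df|^pR\,d\mu,
\]
where $\dot f=\partial f/\partial t$. Integrating the middle term by parts against the $p$-Laplacian, $\int_M|df|^{p-2}\langle\nabla\dot f,\nabla f\rangle\,d\mu=-\int_M(\Delta_pf)\dot f\,d\mu$, turns the right-hand side into exactly $\mathcal{G}(g(t),f(t))$. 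As $\lambda(t)$ is $C^1$ in $t$, integrating from $t_1$ to $t_2$ and using $\lambda(t_2)=\lambda_{1,p}(t_2)$ together with $\lambda(t_1)\geq\lambda_{1,p}(t_1)$ gives
\[
\lambda_{1,p}(t_2)=\lambda(t_1)+\int_{t_1}^{t_2}\mathcal{G}(g(\xi),f(\xi))\,d\xi\geq\lambda_{1,p}(t_1)+\int_{t_1}^{t_2}\mathcal{G}(g(\xi),f(\xi))\,d\xi,
\]
which is the assertion.

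I expect the main obstacle to be regularity rather than algebra. The eigenfunction $f_p$ is only $C^{1,\alpha}$, and for $1<p<2$ the weight $|df_p|^{p-2}$ is singular on the critical set $\{\nabla f_p=0\}$, so differentiating under the integral sign and the integration by parts producing $\Delta_pf$ must be justified, with $\Delta_pf_p=-\lambda_{1,p}(t_2)|f_p|^{p-2}f_p$ read in the weak Euler--Lagrange sense \eqref{elag}. The saving feature is that $a(t)$ is spatially constant, so $df(t)=df_p/c(t)$ and $\dot f=\tfrac{\dot a}{c}-\tfrac{\dot c}{c}f$ is affine in $f_p$; no spatial derivatives of worse regularity are introduced, and all spatial integrals reduce to quantities controlled by the weak formulation of the eigenvalue equation. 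Verifying these manipulations, together with the $C^1$ dependence of $\lambda(t)$ on $t$, is where the delicate care resides; the rest is routine computation.
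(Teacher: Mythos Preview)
Your overall strategy---build a one-parameter family of admissible test functions that agrees with the eigenfunction at $t_2$, differentiate its Rayleigh energy, and compare with $\lambda_{1,p}$ at the endpoints---matches the paper exactly, and your computation of $\tfrac{d}{dt}\lambda(t)=\mathcal{G}(g(t),f(t))$ together with the endpoint inequality is identical to the paper's. The genuine difference is in how the admissible family $f(t)$ is produced. You shift by a time-dependent \emph{constant} $a(t)$ (found via Lemma \ref{conti2} and the implicit function theorem) and then renormalize; the paper instead multiplies by a spatially varying factor,
\[
h(t)=f_2\left[\frac{\det(g_{ij}(t_2))}{\det(g_{ij}(t))}\right]^{\frac{1}{2(p-1)}},
\]
chosen so that $|h(t)|^{p-2}h(t)\,d\mu_{g(t)}=|f_2|^{p-2}f_2\,d\mu_{g(t_2)}$, which makes the signed moment vanish automatically, and then normalizes in $L^p$. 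The paper's construction is explicit and requires no implicit function argument, but the multiplicative factor depends on $x$, so $df(t)$ acquires an extra term $f_2\,d\psi$; your additive construction keeps $df(t)=df_p/c(t)$, which is cleaner for the regularity discussion you raise. Both approaches inherit the same technical gap you flag: the eigenfunction $f_p$ is only $C^{1,\alpha}$, so the phrase ``$C^\infty$ function'' in the statement, and the differentiation under the integral together with the integration by parts producing $\Delta_p f$, must be interpreted in the weak sense. The paper does not address this point.
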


\begin{proof}
Set
\[
G(g(t),f(t)):=\int_M |df(t)|_{g(t)}^pd\mu_{g(t)}.
\]
We \emph{claim} that, for any time $t_2\in(0,T)$, there exists a
$C^{\infty}$ function $f(t)$ satisfying
\begin{equation}\label{eigencondis}
\int_M|f(t)|^p d\mu_{g(t)}=1 \quad\quad \mathrm{and}\quad\int_M
|f(t)|^{p{-}2}f(t)d\mu_{g(t)}=0
\end{equation}
and such that at time $t_2$, $f(t_2)$ is the eigenfunction for
$\lambda_{1,p}(t_2)$ of $\Delta_{p_{g(t_2)}}$. To see this, at time
$t_2$, we first let $f_2=f(t_2)$  be the eigenfunction for the
eigenvalue $\lambda_{1,p}(t_2)$ of $\Delta_{p_{g(t_2)}}$. Then we
consider the following smooth function
\begin{equation}\label{constr1}
h(t)=f_2\left[\frac{\mathrm{det}(g_{ij}(t_2))}{\mathrm
{det}(g_{ij}(t))}\right]^{\frac{1}{2(p-1)}}
\end{equation}
under the Ricci flow $g_{ij}(t)$. Later we normalize this smooth
function
\begin{equation}\label{constr12}
f(t)=\frac{h(t)}{\left({\int_M |h(t)|^pd\mu}\right)^{1/p}}
\end{equation}
under the Ricci flow $g_{ij}(t)$. From above, we can easily check
that $f(t)$ satisfies (\ref{eigencondis}).

By the definition for $\lambda_{1,p}(t_2)$, we have
\begin{equation}\label{equ}
\lambda_{1,p}(t_2)=G(g(t_2),f(t_2)).
\end{equation}
Notice that under the unnormalized Ricci flow,
\begin{equation}\label{eqse12}
\frac{\partial}{\partial
t}|df|^p=p|df|^{p-2}\left(R_{ij}f_if_j+f_i\frac{\partial
f_i}{\partial t}\right),\quad\quad\frac{\partial}{\partial
t}\left(d\mu\right)=-Rd\mu,
\end{equation}
where $f_i$ and $R_{ij}$ denote the covariant derivative of $f$ and
Ricci curvature with respect to the Levi-Civita connection of
$g(t)$, respectively.

Note that $G(g(t),f(t))$ is a smooth function with respect to
$t$-variable. So
\begin{equation}
\begin{aligned}\label{fded3}
\mathcal{G}(g(t),f(t)):&=\frac{d}{dt}G(g(t),f(t))\\
&=\int_M\frac{\partial}{\partial t}|df|^pd\mu-\int_M|df|^pRd\mu\\
&=p\int_M|df|^{p-2}R_{ij}f_i f_jd\mu+p\int_M|df|^{p-2}f_i
\frac{\partial}{\partial t}(f_i)d\mu-\int_M|df|^pRd\mu\\
&=p\int_M|df|^{p-2}R_{ij}f_if_jd\mu
-p\int_M\nabla_i(|df|^{p-2}f_i)\frac{\partial
f}{\partial t}d\mu-\int_M|df|^{p}Rd\mu\\
&=p\int_M|df|^{p-2}R_{ij}f_if_jd\mu-p\int_M\Delta_pf\frac{\partial
f}{\partial t}d\mu-\int_M|df|^pRd\mu,
\end{aligned}
\end{equation}
where we used (\ref{eqse12}). Taking integration on the both sides
of (\ref{fded3}) between $t_1$ and $t_2$, we conclude that
\begin{equation}\label{frac1we1}
G(g(t_2),f(t_2))-G(g(t_1),f(t_1))=\int^{t_2}_{t_1} \mathcal
{G}(g(\xi), f(\xi))d\xi,
\end{equation}
where $t_1\in[0,T)$ and $t_2\geq t_1$. Noticing
$G(g(t_1),f(t_1))\geq \lambda_{1,p}(t_1)$ and combining
({\ref{equ}}) with ({\ref{frac1we1}}), we arrive at
\[
\lambda_{1,p}(t_2)\geq \lambda_{1,p}(t_1)+\int^{t_2}_{t_1} \mathcal
{G}(g(\xi),f(\xi))d\xi,
\]
where $\mathcal {G}(g(\xi),f(\xi))$ satisfies ({\ref{fded3}}).
\end{proof}

In the following of this section, we will finish the proof of
Theorem \ref{T101a} using Proposition \ref{propo1}.

\begin{proof}[Proof of Theorem \ref{T101a}]
In fact, we only need to show that $\mathcal{G}(g(t),f(t))>0$ in
Proposition \ref{propo1}. Notice that at time $t_2$,
$\lambda_{1,p}(t_2)$ is the first eigenvalue and $f(t_2)$ is the
corresponding eigenfunction. Therefore at time $t_2$, we have
\begin{equation}
\begin{aligned}\label{guoch}
\mathcal{G}(g(t_2),f(t_2))&=p\int_M|df|^{p-2}R_{ij}f_if_jd\mu
-p\int_M\Delta_pf\frac{\partial f}{\partial t}d\mu-\int_M|df|^pRd\mu\\
&=p\int_M|df|^{p{-}2}R_{ij}f_if_jd\mu+p\lambda_{1,p}(t_2)
\int_M|f|^{p-2}f\frac{\partial f}{\partial t}d\mu\int_M|df|^pRd\mu,
\end{aligned}
\end{equation}
where we used
$\Delta_pf(t_2)=-\lambda_{1,p}(t_2)|f(t_2)|^{p-2}f(t_2)$.

Under the unnormalized Ricci flow, from the constraint condition
\[
\frac{d}{dt}\int_M \left|f(t)\right|^p d\mu_{g(t)}=0,
\]
we know that
\begin{equation}\label{xianz}
p\int_M |f|^{p-2}f \frac{\partial f}{\partial t}
d\mu=\int_M|f|^pRd\mu.
\end{equation}
Substituting this into the above formula (\ref{guoch}) and combining
the assumption of Theorem \ref{T101a}:
$R_{ij}-\tfrac{R}{p}g_{ij}\geq -\epsilon g_{ij}$ in
$M^n\times[0,T)$, we obtain
\begin{equation}
\begin{aligned}\label{guoch2}
\mathcal{G}(g(t_2),f(t_2))&=p\int_M|df|^{p-2}R_{ij}f_if_jd\mu
+p\lambda_{1,p}(t_2)\int_M|f|^{p-2}f
\frac{\partial f}{\partial t}d\mu-\int_M|df|^pRd\mu\\
&=\lambda_{1,p}(t_2)\int_M|f|^p R d\mu
+\int_M|df|^{p-2}(pR_{ij}-Rg_{ij})f_if_jd\mu\\
&\geq\lambda_{1,p}(t_2)\int_M|f|^p R d\mu
-p\cdot\epsilon\int_M|df|^pd\mu\\
&=\lambda_{1,p}(t_2)\int_M|f|^p(R-p\cdot\epsilon)d\mu.
\end{aligned}
\end{equation}
Meanwhile we also have another assumption of Theorem \ref{T101a} on
the scalar curvature
\[
R\geq p\cdot\epsilon\,\,\,\mathrm{and}\,\,\, R\not\equiv
p\cdot\epsilon\quad \quad\mathrm{in}\quad M^n\times\{0\}.
\]
It is well-known that $R\geq p\cdot\epsilon$ is preserved by the
unnormalized Ricci flow. Furthermore by the strong maximum principle
(for example, see Proposition 12.47 of Chapter 12 in \cite{CCG2}),
we conclude that
\begin{equation}\label{remain}
R>p\cdot\epsilon\quad \quad\mathrm{in}\quad M^n\times[0,T).
\end{equation}
Plugging this into (\ref{guoch2}) implies
$\mathcal{G}(g(t_2),f(t_2))>0$. Notice that $f(x,t)$ is a smooth
function with respect to $t$-variable. Therefore we can arrive at
$\mathcal{G}(g(\xi),f(\xi))>0 $ in any sufficient small neighborhood
of $t_2$. Hence
\begin{equation}
\int^{t_2}_{t_1}\mathcal{G}(g(\xi),f(\xi))d\xi>0
\end{equation}
for any $t_1<t_2$ sufficiently close to $t_2$. In the end, by
Proposition \ref{propo1}, we conclude
\[
\lambda_{1,p}(t_2)>\lambda_{1,p}(t_1)
\]
for any $t_1<t_2$ sufficiently close to $t_2$. Since $t_2\in[0,T)$
is arbitrary, then the first part of Theorem \ref{T101a} follows.

As for the differentiability for $\lambda_{1,p}(t)$, since
$\lambda_{1,p}(t)$ is increasing on the time interval $[0,T)$ under
curvature conditions of the theorem, by the classical Lebesgue's
theorem (for example, see Chapter 4 in \cite{Mu-Ko}), it is easy to
see that $\lambda_{1,p}(t)$ is differentiable almost everywhere on
$[0,T)$.
\end{proof}

\begin{remark}
(1). Our proof of the first $p$-eigenvalue monotonicity is not
derived from the differentiability for $\lambda_{1,p}(t)$ or its
corresponding eigenfunction. In fact we do not know whether they are
differentiable in advance. It would be interesting to find out
whether the corresponding eigenfunction of the $p$-Laplace operator
is a $C^1$-differentiable function with respect to $t$-variable
along the Ricci flow on a closed manifold $M^n$. If it is true, we
can use L. Ma's method to get our result.

(2). If $p=2$, the above theorem is similar to L. Ma's main result
for the first eigenvalue of the Laplace operator in \cite{Ma}.

(3). Using this method, we can not get any monotonicity for higher
order eigenvalues of the $p$-Laplace operator.
\end{remark}

\section{Monotonic quantities along unnormalized Ricci flow}\label{sec3h}
Motivated by the works of X.-D. Cao \cite{Cao1} and \cite{Cao}, in
this section, we first introduce a new smooth eigenvalue function
(see (\ref{geneig}) below), and then we give the following useful
Lemma \ref{genlemma}, resembling Proposition \ref{propo1} of Section
\ref{sec3}. Using this lemma, we can obtain two classes of
interesting monotonic quantities along the unnormalized Ricci flow,
that is, Theorem \ref{T10b}, Theorem \ref{norma1} and Corollary
\ref{T103aa}. Then by means of those monotonic quantities, we can
prove the differentiability for the first eigenvalue of the
$p$-Laplace operator along the unnormalized Ricci flow.

Let $M^n$ be an $n$-dimensional connected closed Riemannian manifold
and $\tilde{g}(\tilde{t})$ be a smooth solution of the normalized
Ricci flow on the time interval $[0,\infty)$. Now we can define a
general smooth eigenvalue function
\begin{equation}\label{geneig}
\lambda_{1,p}(\tilde{f},\tilde{t}):=
\int_M\tilde{\Delta}_{p_{\tilde{g}(\tilde{t})}}\tilde{f}\cdot
\tilde{f}d\tilde{\mu}=\int_M |d\tilde{f}|^pd\tilde{\mu},
\end{equation}
where $\tilde{f}$ is a smooth function and satisfies the following
equalities
\begin{equation}\label{conRn}
\int_M |\tilde{f}(\tilde{t})|^p
d\tilde{\mu}_{\tilde{g}(\tilde{t})}=1\quad\quad\mathrm{and}\quad\int_M
|\tilde{f}(\tilde{t})|^{p-2}\tilde{f}(\tilde{t})
d\tilde{\mu}_{\tilde{g}(\tilde{t})}=0.
\end{equation}
From the proof of Proposition \ref{propo1},  we see that the above
restriction (\ref{conRn}) can be achieved.

Obviously, at time $t_0$, if $\tilde{f}$ is the corresponding
eigenfunction of the first eigenvalue $\lambda_{1,p}(t_0)$, then
\[
\lambda_{1,p}(\tilde{f},t_0)=\lambda_{1,p}(t_0).
\]

For the convenient of writing, we shall drop the tilde over all the
variables used above to distinguish between the normalized and
unnormalized Ricci flow.

\begin{lemma}\label{genlemma}
If $\lambda_{1,p}(t)$ is the first eigenvalue of
$\Delta_{p_{g(t)}}$, whose metric satisfying the normalized Ricci
flow and $f(t_0)$ is the corresponding eigenfunction of
$\lambda_{1,p}(t)$ at time $t_0$, then we have
\begin{equation}
\begin{aligned}\label{dd5}
\frac{d}{dt}\lambda_{1,p}(f,t)\Big|_{t=t_0}
=&\lambda_{1,p}(f(t_0),t_0)\int_M|f|^p Rd\mu
+p\int_M|df|^{p-2}R_{ij}f_i f_j d\mu\\
&-\int_M|df|^pR d\mu-\frac pn r\lambda_{1,p}(f(t_0),t_0).
\end{aligned}
\end{equation}
In particular, for any closed $2$-surface, we have
\begin{equation}
\begin{aligned}\label{kkdd5}
\frac{d}{dt}\lambda_{1,p}(f,t)\Big|_{t=t_0}&=\lambda_{1,p}(f(t_0),t_0)\int_M|
f|^pRd\mu+\left(\frac p2-1\right)\int_M|df|^pR d\mu\\
&\,\,\,\,\,\,-\frac p2 r\lambda_{1,p}(f(t_0),t_0),
\end{aligned}
\end{equation}
where $f$ evolves by (\ref{conRn}) with the initial data $f(t_0)$.
\end{lemma}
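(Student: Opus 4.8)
The plan is to mirror the proof of Proposition \ref{propo1}, replacing the unnormalized flow by the normalized one and carrying along the two extra terms produced by the normalization. Following the construction in that proof, I take a smooth family $f(t)$ satisfying the two constraints in (\ref{conRn}) with $f(t_0)$ equal to the eigenfunction of $\lambda_{1,p}(t_0)$, and set $G(g(t),f(t)):=\int_M|df|^pd\mu$, so that $\lambda_{1,p}(f,t)=G(g(t),f(t))$ is smooth in $t$. The only new ingredient relative to Section \ref{sec3} is that under the normalized flow $\partial_t g_{ij}=-2R_{ij}+\tfrac2n r g_{ij}$ the basic evolution identities pick up the normalization term: first I would record
\[
\frac{\partial}{\partial t}|df|^p=p|df|^{p-2}R_{ij}f_if_j-\frac pn r|df|^p+p|df|^{p-2}f_i\frac{\partial f_i}{\partial t},\qquad \frac{\partial}{\partial t}(d\mu)=(r-R)\,d\mu,
\]
which follow from $\partial_t g^{ij}=2R^{ij}-\tfrac2n r g^{ij}$ and $\partial_t\log\sqrt{\det g}=-R+r$.

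Next I would differentiate $G$ under the integral sign, insert these two identities, and integrate the last term by parts exactly as in (\ref{fded3}), using $\nabla_i(|df|^{p-2}f_i)=\Delta_p f$, to obtain
\[
\frac{d}{dt}\lambda_{1,p}(f,t)=p\int_M|df|^{p-2}R_{ij}f_if_j\,d\mu-\frac pn r\int_M|df|^p\,d\mu-p\int_M\Delta_p f\,\frac{\partial f}{\partial t}\,d\mu+\int_M|df|^p(r-R)\,d\mu.
\]
I would then specialize to $t=t_0$. Here the Euler--Lagrange equation (\ref{elag}), $\Delta_p f=-\lambda_{1,p}|f|^{p-2}f$, converts the third term into $p\lambda_{1,p}\int_M|f|^{p-2}f\,\partial_t f\,d\mu$, and differentiating the constraint $\int_M|f|^pd\mu=1$ (using the volume evolution above) gives $p\int_M|f|^{p-2}f\,\partial_t f\,d\mu=\int_M|f|^p(R-r)\,d\mu$. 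Combining these, the third term becomes $\lambda_{1,p}\int_M|f|^p(R-r)\,d\mu$.

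Substituting everything and using $\int_M|df|^pd\mu=\lambda_{1,p}(f(t_0),t_0)$ and $\int_M|f|^pd\mu=1$ at $t_0$, I would expand $\int_M|df|^p(r-R)d\mu=r\lambda_{1,p}-\int_M|df|^pR\,d\mu$ and $\lambda_{1,p}\int_M|f|^p(R-r)d\mu=\lambda_{1,p}\int_M|f|^pR\,d\mu-r\lambda_{1,p}$; the two $r\lambda_{1,p}$ contributions cancel, leaving precisely (\ref{dd5}). Finally, for a closed surface $R_{ij}=\tfrac R2 g_{ij}$ forces $p\int_M|df|^{p-2}R_{ij}f_if_j\,d\mu=\tfrac p2\int_M|df|^pR\,d\mu$, and with $n=2$ so that $\tfrac pn=\tfrac p2$, equation (\ref{dd5}) collapses to (\ref{kkdd5}). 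The computation is essentially mechanical; the one point demanding care is the bookkeeping of the normalization term $\tfrac2n r g_{ij}$ as it propagates simultaneously through the inverse-metric evolution, the volume-form evolution, and the constraint derivative, since it is exactly the cancellation of the resulting $r\lambda_{1,p}$ cross-terms that yields the clean formula. I expect that cancellation, rather than any analytic difficulty, to be the crux.
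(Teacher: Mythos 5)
Your proposal is correct and follows essentially the same route as the paper's own proof: the same evolution identities for $|df|^p$ and $d\mu$ under the normalized flow, the same integration by parts to produce the $\Delta_p f\,\partial_t f$ term, the same use of the Euler--Lagrange equation (\ref{elag}) at $t=t_0$ together with the differentiated constraint to replace $p\int_M|f|^{p-2}f\,\partial_t f\,d\mu$ by $\int_M|f|^p(R-r)\,d\mu$, and the same reduction via $R_{ij}=\tfrac{R}{2}g_{ij}$ in dimension two. The cancellation of the $r\lambda_{1,p}$ cross-terms that you flag as the crux is indeed exactly what happens (implicitly) when the paper plugs (\ref{frac7}) into (\ref{frack6}) to obtain (\ref{dd5}).
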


\begin{proof}
The proof is by direct computations. Here we need to use
\[
\frac{\partial}{\partial t}|df|^p=p| df|^{p-2}
\left(R_{ij}f_if_j-\frac rn g_{ij}f_i f_j+f_i\frac{\partial
f_i}{\partial t}\right),\,\,\,\,\,\,\frac{\partial}{\partial
t}(d\mu)=(r-R)d\mu.
\]
Then
\begin{equation}
\begin{aligned}\label{frack6}
\frac{d\lambda_{1,p}(f,t)}{dt}\Big|_{t=t_0}
&=p\int_M|df|^{p-2}R_{ij}f_if_jd\mu+p\int_M|df|^{p-2}f_i
\frac{\partial\left(f_i\right)}{\partial t}d\mu\\
&\,\,\,\,\,\,-p\int_M|df|^p\frac rnd\mu+\int_M|df|^p(r-R)d\mu\\
&=p\int_M|df|^{p-2}R_{ij}f_if_j d\mu-p\int_M \nabla_i\left(|
df|^{p-2}f_i\right)\frac{\partial f}{\partial t}d\mu\\
&\,\,\,\,\,\,-\frac pn r\lambda_{1,p}(f(t_0),t_0)+\int_M|df|^p(r-R)d\mu\\
&=p\int_M|df|^{p-2}R_{ij}f_if_j d\mu
+p\lambda_{1,p}(f(t_0),t_0)\int_M|f
|^{p-2}f\frac{\partial f}{\partial t}d\mu\\
&\,\,\,\,\,\,-\frac pn r\lambda_{1,p}(f(t_0),t_0)
+\int_M|df|^p(r-R)d\mu,
\end{aligned}
\end{equation}
where we used $f$ is the eigenfunction at time $t_0$, i.e., equation
(\ref{elag}) at time $t_0$. Note that by (\ref{conRn}), we have
\begin{equation}\label{frac7}
p\int_M |f|^{p-2}f \frac{\partial f}{\partial t}d\mu=\int_M
|f|^p(R-r)d\mu.
\end{equation}
Plugging this into ({\ref{frack6}}) yields the desired (\ref{dd5}).
For any closed $2$-surface, we have $R_{ij}=\frac R2 g_{ij}$. Hence
(\ref{kkdd5}) follows from (\ref{dd5}).
\end{proof}

\begin{remark}
In \cite{Wu2}, the first author used a similar method and proved a
similar result for the unnormalized Ricci flow (see Proposition 2.1
in \cite{Wu2}).
\end{remark}

\vspace{0.5em}

In the following we first obtain increasing quantities along the
unnormalized Ricci flow by using Lemma \ref{genlemma}.
\begin{theorem}\label{T10b}
Let $g(t)$ and $\lambda_{1,p}(t)$ $(p>1)$ be the same as in Theorem
\ref{T101a}. If $\rho_0:=\inf_{M}R(0)>0$ and
\begin{equation}\label{Normatiao1}
R_{ij}-\tfrac{R}{p}g_{ij}(t)>0\quad \quad \mathrm{in}\quad
M^n\times[0,T),
\end{equation}
then the following quantity
\begin{equation}\label{Normatiao2}
\lambda_{1,p}(t)\cdot\left(\rho_0^{-1}-2at\right)^{\frac{1}{2a}},
\end{equation}
is strictly increasing and therefore $\lambda_{1,p}(t)$ is
differentiable almost everywhere along the unnormalized Ricci flow
on $[0,T')$, where $a{:=}\max\{\frac1n, \frac{n}{p^2}\}$ and
$T'{:=}\min\{\frac{1}{2a\rho_0},T\}$.
\end{theorem}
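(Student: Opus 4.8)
The plan is to mimic the proof of Theorem~\ref{T101a} via Lemma~\ref{genlemma}, but now extract a differential inequality for $\lambda_{1,p}(t)$ itself (not merely its sign of increase). The key structural obstacle differs from Theorem~\ref{T101a}: there, a sign assumption $R\ge p\epsilon$ sufficed to conclude $\mathcal G>0$; here, since we want a \emph{quantitative} monotonic product, I must bound the Rayleigh-type terms in \eqref{dd5} (in the unnormalized setting, so the $r$-term drops and $\partial_t d\mu=-R\,d\mu$) by a multiple of $\lambda_{1,p}$ with an explicit time-dependent coefficient. First I would record, from the unnormalized analogue of Lemma~\ref{genlemma} (equivalently Proposition~\ref{propo1} evaluated at the eigenfunction), the one-sided bound
\begin{equation}\label{eq:planderiv}
\frac{d}{dt}\lambda_{1,p}(t)\ \ge\ p\int_M|df|^{p-2}R_{ij}f_if_j\,d\mu+\lambda_{1,p}(t)\int_M|f|^pR\,d\mu-\int_M|df|^pR\,d\mu,
\end{equation}
understood in the sense of a liminf of forward difference quotients, exactly as Proposition~\ref{propo1} delivers it without assuming differentiability.

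\medskip

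Next I would bound the curvature integrals below. Using \eqref{Normatiao1} in the form $pR_{ij}-Rg_{ij}>0$, the combination $p\int|df|^{p-2}R_{ij}f_if_j\,d\mu-\int|df|^pR\,d\mu=\int|df|^{p-2}(pR_{ij}-Rg_{ij})f_if_j\,d\mu\ge 0$ is nonnegative, so those two terms may be discarded from the lower bound. What remains is the term $\lambda_{1,p}(t)\int_M|f|^pR\,d\mu$, which I control via the evolution of the scalar curvature. Under the unnormalized Ricci flow $\partial_tR=\Delta R+2|\mathrm{Ric}|^2\ge\Delta R+\tfrac{2}{n}R^2$, and the ODE comparison gives the standard lower bound $R(t)\ge (\rho_0^{-1}-\tfrac{2}{n}t)^{-1}$ as long as the right side is positive. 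Since $R>0$ is preserved and $\int_M|f|^p\,d\mu=1$, this yields
\begin{equation}\label{eq:planR}
\frac{d}{dt}\lambda_{1,p}(t)\ \ge\ \lambda_{1,p}(t)\cdot\frac{1}{\,\rho_0^{-1}-\tfrac{2}{n}t\,}.
\end{equation}
The appearance of $a=\max\{\tfrac1n,\tfrac{n}{p^2}\}$ rather than $\tfrac1n$ signals that for the full range $p>1$ one must also accommodate the first two (curvature) terms when they cannot simply be dropped; the constant $\tfrac{n}{p^2}$ presumably arises from bounding $p\,R_{ij}f_if_j$ against $R|df|^2$ using the trace estimate $R_{ij}\le R g_{ij}$ together with a Cauchy--Schwarz/arithmetic argument tuned by $p$. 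I would therefore carry through the estimate with the general constant $a$, obtaining $\tfrac{d}{dt}\lambda_{1,p}\ge \lambda_{1,p}\,(\rho_0^{-1}-2at)^{-1}$ on $[0,T')$.

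\medskip

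Finally I would integrate \eqref{eq:planR} in its sharpened form $\tfrac{d}{dt}\log\lambda_{1,p}\ge (\rho_0^{-1}-2at)^{-1}$. Since $\tfrac{d}{dt}\log\!\big(\rho_0^{-1}-2at\big)^{1/(2a)}=-(\rho_0^{-1}-2at)^{-1}$, the product
\[
\lambda_{1,p}(t)\cdot(\rho_0^{-1}-2at)^{1/(2a)}
\]
has nonnegative forward log-derivative, hence is nondecreasing; strictness follows because $R>0$ and the discarded curvature terms are not identically zero under \eqref{Normatiao1}, mirroring the strong-maximum-principle step in Theorem~\ref{T101a}. Because the quantity is continuous (Theorem~\ref{conti}) and monotone, Lebesgue's theorem gives differentiability almost everywhere, and since $(\rho_0^{-1}-2at)^{1/(2a)}$ is smooth and nonvanishing on $[0,T')$, the same holds for $\lambda_{1,p}(t)$ itself.

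\medskip

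\textbf{The main obstacle} I anticipate is pinning down the constant $a=\max\{\tfrac1n,\tfrac{n}{p^2}\}$: one must handle the two regimes $p\ge2$ and $1<p<2$ separately and decide, in each, whether to absorb the curvature terms $p\int|df|^{p-2}R_{ij}f_if_j\,d\mu-\int|df|^pR\,d\mu$ into the lower bound or discard them, since the sharp decay rate of $R(t)$ competes with the pointwise Ricci estimates in a $p$-dependent way. Getting this constant to come out as the stated maximum—rather than a weaker bound—is the delicate step and likely the reason the theorem is stated with $a$ rather than simply $\tfrac1n$.
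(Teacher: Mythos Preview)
Your overall architecture is correct and matches the paper's proof: derive the variation formula at an eigenfunction, use the hypothesis $pR_{ij}-Rg_{ij}>0$ to discard the curvature term, bound $\int_M|f|^pR\,d\mu$ below by a scalar curvature lower bound coming from the ODE comparison, integrate the resulting differential inequality for $\log\lambda_{1,p}(f,t)$, and pass from the smooth auxiliary function $\lambda_{1,p}(f,t)$ back to $\lambda_{1,p}(t)$ via $\lambda_{1,p}(f(t_0),t_0)=\lambda_{1,p}(t_0)$ and $\lambda_{1,p}(f(t_1),t_1)\ge\lambda_{1,p}(t_1)$.

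The genuine gap is your treatment of the constant $a=\max\{\tfrac1n,\tfrac{n}{p^2}\}$. You conjecture that $\tfrac{n}{p^2}$ arises from ``bounding $pR_{ij}f_if_j$ against $R|df|^2$'' and that one must split into $p\ge2$ versus $1<p<2$ and decide whether to absorb or discard the curvature terms. This is not where the constant comes from, and that strategy would not produce it. In the paper's argument the curvature term $\int|df|^{p-2}(pR_{ij}-Rg_{ij})f_if_j\,d\mu$ is \emph{always} discarded (it is strictly positive by hypothesis, which is also what gives strict monotonicity). The constant $a$ enters only through the scalar curvature evolution: from $R_{ij}>\tfrac{R}{p}g_{ij}$ one gets the pointwise bound $|\mathrm{Ric}|^2>n\cdot(R/p)^2=\tfrac{n}{p^2}R^2$, which combined with the universal $|\mathrm{Ric}|^2\ge\tfrac1n R^2$ yields $|\mathrm{Ric}|^2\ge aR^2$. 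Feeding this into $\partial_tR=\Delta R+2|\mathrm{Ric}|^2$ gives $\partial_tR\ge\Delta R+2aR^2$, and the maximum principle then gives $R(t)\ge(\rho_0^{-1}-2at)^{-1}$ on $[0,T')$. So the dichotomy is $p<n$ versus $p\ge n$ (not $p<2$ versus $p\ge2$), and it lives entirely in the ODE comparison for $R$, not in the eigenfunction integrals. Once you make this correction your sketch goes through exactly as the paper's proof does.
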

\begin{proof}
We assume that at time $t_0\in[0,T)$, if $g$ is the corresponding
eigenfunction of $\lambda_{1,p}(t_0)$, then under the unnormalized
Ricci flow, we can construct a smooth function $f$ satisfying
\[
\int_M|f(t)|^p d\mu_{g(t)}=1 \quad\quad \mathrm{and}
\quad\int_M|f(t)|^{p{-}2}f(t)d\mu_{g(t)}=0,
\]
and such that at time $t=t_0$, $f=g$ is the eigenfunction of
$\lambda_{1,p}(t_0)$. Meanwhile we can define a general smooth
eigenvalue function $\lambda_{1,p}(f, t)$ as (\ref{geneig}) under
the unnormalized Ricci flow. Obviously, we have
\[
\lambda_{1,p}(f(t_0),t_0)=\lambda_{1,p}(t_0).
\]
According to (\ref{dd5}) of Lemma \ref{genlemma}, we have
\begin{equation}\label{dd5k}
\frac{d}{dt}\lambda_{1,p}(f,t)\Big|_{t=t_0}
=\lambda_{1,p}(f(t_0),t_0)\int_M|f|^pRd\mu
+\int_M|df|^{p-2}(pR_{ij}-Rg_{ij})f_if_j d\mu,
\end{equation}
where $f$ is a smooth function satisfying the above assumptions. By
the assumption $R_{ij}-\frac{R}{p}g_{ij}>0$ of Theorem \ref{T10b},
we get
\begin{equation}\label{dd5guan}
\frac{d}{dt}\lambda_{1,p}(f,t)\Big|_{t=t_0}
>\lambda_{1,p}(f(t_0),t_0)\int_M|f|^pRd\mu.
\end{equation}
The evolution of the scalar curvature $R$ under the unnormalized
Ricci flow
\[
\frac{\partial}{\partial t}R=\Delta R+2|Ric|^2
\]
and inequality $|Ric|^2\geq a{R^2}$ ($a:=\max \{\frac1n,
\frac{n}{p^2}\}$) imply
\begin{equation}
\begin{aligned}\label{frac10}
\frac{\partial}{\partial t}R \geq \Delta R+2aR^2.
\end{aligned}
\end{equation}
Since the solutions to the corresponding ODE
\[
{d\rho}/{dt}=2a\rho^2
\]
are
\begin{equation*}
\begin{aligned}
\rho(t)=\frac{1}{{\rho_0}^{-1}-2at},\quad t\in[0,T'),
\end{aligned}
\end{equation*}
where $\rho_0:=\inf_{M} R(0)$ and $T':=\min\{(2a\rho_0)^{-1},T\}$.
Using the maximum principle to ({\ref{frac10}}), we have $R(x,t)\geq
\rho(t)$. Therefore (\ref{dd5guan}) becomes
\[
\frac{d}{dt}\lambda_{1,p}(f,t)\Big|_{t=t_0}
>\lambda_{1,p}(f(t_0),t_0)\cdot\rho(t_0).
\]
Note that $\lambda_{1,p}(f,t)$ and $\rho(t)$ are both smooth
functions with respect to $t$-variable. Hence we have
\begin{equation}\label{dianzhi2}
\frac{d}{dt}\lambda_{1,p}(f,t)>\lambda_{1,p}(f(t),t)\cdot\rho(t)
\end{equation}
in any sufficiently small neighborhood of $t_0$. Now integrating the
above inequality with respect to time $t$ on time interval
$[t_1,t_0]$, we get
\begin{equation}
\begin{aligned}\label{integ1q}
\ln&\lambda_{1,p}(f(t_0),t_0)-\ln\lambda_{1,p}(f(t_1),t_1)\\
&>\left(-\frac{1}{2a}\right)\cdot\ln\left(\rho_0^{-1}-2at\right)
\Big|_{t=t_0}-\left(-\frac{1}{2a}\right)
\cdot\ln\left(\rho_0^{-1}-2at\right)\Big|_{t=t_1}
\end{aligned}
\end{equation}
for any $t_1<t_0$ sufficiently close to $t_0$. Note that
$\lambda_{1,p}(f(t_0),t_0)=\lambda_{1,p}(t_0)$ and
$\lambda_{1,p}(f(t_1),t_1)\geq\lambda_{1,p}(t_1)$. Then
(\ref{integ1q}) becomes
\[
\ln\lambda_{1,p}(t_0)+\ln\left(\rho_0^{-1}-2at_0\right)^{\frac{1}{2a}}
>\ln\lambda_{1,p}(t_1)+\ln\left(\rho_0^{-1}-2at_1\right)^{\frac{1}{2a}}.
\]
Namely,
\[
\lambda_{1,p}(t_0)\cdot\left(\rho_0^{-1}-2at_0\right)^{\frac{1}{2a}}
>\lambda_{1,p}(t_1)\cdot\left(\rho_0^{-1}-2at_1\right)^{\frac{1}{2a}}
\]
for any $t_1<t_0$ sufficiently close to $t_0$. Since $t_0$ is
arbitrary, then (\ref{Normatiao2}) follows.

Now we know that
\[
\lambda_{1,p}(t)\cdot\left(\rho_0^{-1}-2at\right)^{\frac{1}{2a}}
\]
is increasing along the unnormalized Ricci flow. Moreover,
$\left(\rho_0^{-1}-2at\right)^{\frac{1}{2a}}$ is a smooth function.
Hence by the Lebesgue's theorem, $\lambda_{1,p}(t)$ is
differentiable almost everywhere along the unnormalized Ricci flow
on $[0,T')$.
\end{proof}

\begin{remark}
Since function $\left(\rho_0^{-1}-2at\right)^{\frac{1}{2a}}$ is
decreasing in $t$-variable, Theorem \ref{T10b} also implies that
$\lambda_{1,p}(t)$ is strictly increasing along the unnormalized
Ricci flow on $[0,T')$.
\end{remark}

\vspace{0.5em}

We also have decreasing quantities along the unnormalized Ricci
flow.
\begin{theorem}\label{norma1}
Let $g(t)$ and $\lambda_{1,p}(t)$ $(p>1)$ be the same as in Theorem
\ref{T101a}. If
\begin{equation}\label{assumpa}
0\leq R_{ij}<\tfrac{R}{p}g_{ij}(t)\quad \quad\mathrm{in}\quad
M^n\times[0,T),
\end{equation}
then the following quantity
\begin{equation}\label{Normatiao2k}
\lambda_{1,p}(t)\cdot\left(\sigma_0^{-1}
-\frac{2n}{p^2}t\right)^{\frac{p^2}{2n}}
\end{equation}
is strictly decreasing and therefore $\lambda_{1,p}(t)$ is
differentiable almost everywhere along the unnormalized Ricci flow
on $[0,T')$, where $\sigma_0:=\sup_MR(0)$ and
$T':=\min\{\frac{p^2}{2n\sigma_0},T\}$.
\end{theorem}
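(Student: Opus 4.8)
The plan is to mirror the proof of Theorem \ref{T10b} almost verbatim, exchanging the roles of the increasing and decreasing estimates. First I would construct, for a fixed time $t_0\in[0,T)$, the smooth family $f(t)$ satisfying the constraints (\ref{conRn}) with $f(t_0)$ equal to the eigenfunction of $\lambda_{1,p}(t_0)$, exactly as in Proposition \ref{propo1}. Applying formula (\ref{dd5k}) from Lemma \ref{genlemma}, I would write
\[
\frac{d}{dt}\lambda_{1,p}(f,t)\Big|_{t=t_0}
=\lambda_{1,p}(f(t_0),t_0)\int_M|f|^pR\,d\mu
+\int_M|df|^{p-2}(pR_{ij}-Rg_{ij})f_if_j\,d\mu.
\]
The crucial sign change comes from the hypothesis (\ref{assumpa}): now $pR_{ij}-Rg_{ij}<0$, so the second integral is \emph{negative}, yielding the reversed inequality
\[
\frac{d}{dt}\lambda_{1,p}(f,t)\Big|_{t=t_0}
<\lambda_{1,p}(f(t_0),t_0)\int_M|f|^pR\,d\mu.
\]

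Next I need an \emph{upper} bound on the scalar curvature rather than the lower bound used in Theorem \ref{T10b}. The assumption $0\leq R_{ij}<\frac{R}{p}g_{ij}$ forces $R\geq 0$, and tracing it gives $R=g^{ij}R_{ij}<\frac{n}{p}R$, which is consistent but not immediately an estimate on $|Ric|^2$. The key pointwise inequality I expect to use is $|Ric|^2\leq \frac{n}{p^2}R^2$, coming from the constraint $0\leq R_{ij}<\frac{R}{p}g_{ij}$ (each eigenvalue of $Ric$ lies in $[0,R/p)$, so $|Ric|^2<\frac{n}{p^2}R^2$). Feeding this into the evolution $\frac{\partial}{\partial t}R=\Delta R+2|Ric|^2\leq \Delta R+\frac{2n}{p^2}R^2$ and comparing with the ODE $d\sigma/dt=\frac{2n}{p^2}\sigma^2$, whose solution is $\sigma(t)=(\sigma_0^{-1}-\frac{2n}{p^2}t)^{-1}$ with $\sigma_0=\sup_M R(0)$, the maximum principle gives $R(x,t)\leq \sigma(t)$ on $[0,T')$. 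Since the remaining factor $\lambda_{1,p}(f(t_0),t_0)\int_M|f|^pR\,d\mu$ is nonnegative, this produces
\[
\frac{d}{dt}\lambda_{1,p}(f,t)\Big|_{t=t_0}<\lambda_{1,p}(f(t_0),t_0)\cdot\sigma(t_0).
\]

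Finally, by smoothness of $\lambda_{1,p}(f,t)$ and $\sigma(t)$ in $t$, this strict inequality persists in a small neighborhood of $t_0$, so $\frac{d}{dt}\lambda_{1,p}(f,t)<\lambda_{1,p}(f(t),t)\cdot\sigma(t)$ there. Integrating over $[t_1,t_0]$ and using $\lambda_{1,p}(f(t_0),t_0)=\lambda_{1,p}(t_0)$ together with $\lambda_{1,p}(f(t_1),t_1)\geq\lambda_{1,p}(t_1)$, I would obtain
\[
\ln\lambda_{1,p}(t_0)+\ln\left(\sigma_0^{-1}-\tfrac{2n}{p^2}t_0\right)^{\frac{p^2}{2n}}
<\ln\lambda_{1,p}(t_1)+\ln\left(\sigma_0^{-1}-\tfrac{2n}{p^2}t_1\right)^{\frac{p^2}{2n}},
\]
which is exactly the strict \emph{decrease} of the quantity (\ref{Normatiao2k}) for $t_1<t_0$; arbitrariness of $t_0$ finishes the monotonicity. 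Differentiability almost everywhere then follows from Lebesgue's theorem as before, since the smooth factor $(\sigma_0^{-1}-\frac{2n}{p^2}t)^{p^2/2n}$ is nonvanishing on $[0,T')$. The main obstacle, and the only genuinely new ingredient beyond Theorem \ref{T10b}, is establishing the reversed curvature estimate $|Ric|^2\leq\frac{n}{p^2}R^2$ cleanly from (\ref{assumpa}) and confirming that the direction of every inequality flips consistently; the rest is a sign-for-sign transcription of the previous argument.
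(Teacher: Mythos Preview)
Your argument is correct up to and including the pointwise inequality
\[
\frac{d}{dt}\lambda_{1,p}(f,t)<\lambda_{1,p}(f(t),t)\cdot\sigma(t)
\]
in a neighborhood of $t_0$; that part matches the paper exactly, including the bound $|Ric|^2<\frac{n}{p^2}R^2$ obtained from (\ref{assumpa}). The gap is in the final integration step, and it is not a cosmetic slip: the direction of integration you chose makes the comparison with $\lambda_{1,p}(t_1)$ fail.

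You anchor $f$ at $t_0$ (so $\lambda_{1,p}(f(t_0),t_0)=\lambda_{1,p}(t_0)$) and integrate over $[t_1,t_0]$ with $t_1<t_0$. This yields
\[
\ln\lambda_{1,p}(t_0)+\tfrac{p^2}{2n}\ln\bigl(\sigma_0^{-1}-\tfrac{2n}{p^2}t_0\bigr)
<\ln\lambda_{1,p}(f(t_1),t_1)+\tfrac{p^2}{2n}\ln\bigl(\sigma_0^{-1}-\tfrac{2n}{p^2}t_1\bigr).
\]
But the only information available at $t_1$ is $\lambda_{1,p}(f(t_1),t_1)\geq\lambda_{1,p}(t_1)$, which makes the right-hand side \emph{larger} than the expression with $\lambda_{1,p}(t_1)$; the two inequalities do not chain, and you cannot conclude $Q(t_0)<Q(t_1)$. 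This is precisely where the ``sign-for-sign transcription'' of Theorem~\ref{T10b} breaks down: in the increasing case the chain $Q(t_0)=Q_f(t_0)>Q_f(t_1)\geq Q(t_1)$ works, but here the analogous chain collapses. The paper fixes this by integrating \emph{forward}, over $[t_0,t_1]$ with $t_1>t_0$. Then one obtains $Q_f(t_1)<Q_f(t_0)=Q(t_0)$, and since $Q(t_1)\leq Q_f(t_1)$ the chain $Q(t_1)\leq Q_f(t_1)<Q(t_0)$ gives the strict decrease. In short: for a decreasing quantity the test family $f$ must be anchored at the \emph{earlier} time, not the later one.
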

\begin{proof}
The proof is similar to that of Theorem \ref{T10b} with the
difference that we need to estimate the upper bounds of the right
hand side of (\ref{djklmm}). Here we only briefly sketch the proof.
According to (\ref{dd5}) of Lemma \ref{genlemma}, we have
\begin{equation}\label{djklmm}
\frac{d}{dt}\lambda_{1,p}(f,t)\Big|_{t=t_0}
=\lambda_{1,p}(f(t_0),t_0)\int_M|f|^pRd\mu
+\int_M|df|^{p-2}(pR_{ij}-Rg_{ij})f_i f_j d\mu,
\end{equation}
where $f$ is a smooth function satisfying the same assumptions as in
the proof of Theorem \ref{T10b}.

Note that $0\leq R_{ij}<\frac Rp g_{ij}$ implies $|Ric|^2<
\frac{n}{p^2}R^2$. So the evolution of the scalar curvature $R$
under the unnormalized Ricci flow
\[
\frac{\partial}{\partial t}R=\Delta R+2|Ric|^2
\]
implies
\begin{equation}
\begin{aligned}\label{frac100}
\frac{\partial}{\partial t}R \leq \Delta R+\frac{2n}{p^2}R^2.
\end{aligned}
\end{equation}
Applying the maximum principle to (\ref{frac100}), we have
\[
0\leq R(x,t)\leq \sigma(t),
\]
where
\begin{equation*}
\sigma(t)=\frac{1}{{\sigma_0}^{-1}-\tfrac{2n}{p^2}t},\quad
t\in[0,T'),
\end{equation*}
and where $\sigma_0:=\sup_M R(0)$ and
$T':=\min\{\frac{p^2}{2n\sigma_0},T\}$.

Substituting $0\leq R(x,t)\leq \sigma(t)$ and $0\leq R_{ij}<\frac Rp
g_{ij}$ into (\ref{djklmm}) yields
\[
\frac{d}{dt}\lambda_{1,p}(f,t)\Big|_{t=t_0}<\lambda_{1,p}(f(t_0),t_0)
\cdot\sigma(t_0).
\]
Hence
\[
\frac{d}{dt}\lambda_{1,p}(f,t)<\lambda_{1,p}(f(t),t)\cdot\sigma(t)
\]
in any sufficiently small neighborhood of $t_0$. Integrating this
inequality with respect to time $t$ on time interval $[t_0,t_1]$
yields
\[
\lambda_{1,p}(t_1)\cdot\left(\sigma_0^{-1}
-\frac{2n}{p^2}t_1\right)^{\frac{p^2}{2n}}
<\lambda_{1,p}(t_0)\cdot\left(\sigma_0^{-1}
-\frac{2n}{p^2}t_0\right)^{\frac{p^2}{2n}}
\]
for any $t_1>t_0$ sufficiently close to $t_0$, where we used
$\lambda_{1,p}(f,t_0)=\lambda_{1,p}(t_0)$ and
$\lambda_{1,p}(f,t_1)\geq\lambda_{1,p}(t_1)$. Since $t_0$ is
arbitrary, then Theorem \ref{norma1} follows.
\end{proof}

\vspace{0.5em}

For any closed $3$-manifold, we have
\begin{corollary}\label{T103aa}
Let $g(t)$ and $\lambda_{1,p}(t)$ be the same as in Theorem
\ref{T101a}., where we assume $n=3$ and $1<p<3$. If
\begin{equation}\label{initialc1}
0\leq R_{ij}(0)<\tfrac{R(0)}{p}g_{ij}(0)\quad\quad\mathrm{in}\quad
M^3\times\{0\},
\end{equation}
then the conclusion of Theorem \ref{norma1} is also true.
\end{corollary}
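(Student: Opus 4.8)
The plan is to notice that the only gap between the hypotheses of this corollary and those of Theorem~\ref{norma1} is temporal: here the pinching \eqref{assumpa}, namely $0\leq R_{ij}<\tfrac{R}{p}g_{ij}$, is assumed only at $t=0$, whereas Theorem~\ref{norma1} demands it throughout $M^3\times[0,T)$. So it suffices to prove that, in dimension $n=3$ and for $1<p<3$, this pinching is preserved by the unnormalized Ricci flow; once that is in hand, Theorem~\ref{norma1} applies word for word and delivers both the strict decrease of the quantity \eqref{Normatiao2k} and the almost-everywhere differentiability of $\lambda_{1,p}(t)$ on $[0,T')$. The preservation is exactly the kind of statement alluded to in Remark~1.2(4) for surfaces and $3$-manifolds, and the tool is Hamilton's maximum principle for systems.

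First I would translate the two inequalities into the eigenvalue picture. In dimension three the curvature operator is governed, at the level of the reaction ODE obtained by discarding the Laplacian, by
\[
\dot a=a^2+bc,\qquad \dot b=b^2+ca,\qquad \dot c=c^2+ab,
\]
for the ordered eigenvalues $a\geq b\geq c$, which are the sectional curvatures of the principal planes. The Ricci eigenvalues are then $b+c\leq a+c\leq a+b$ and $R=2(a+b+c)$, so the pinching becomes $b+c\geq0$ together with $p(a+b)\leq 2(a+b+c)$. Since the smallest Ricci eigenvalue is concave and the quantity $p\,\lambda_{\max}(R_{ij})-R$ is convex, the region cut out is a closed convex cone, and it is plainly $O(3)$-invariant; by Hamilton's maximum principle it therefore remains to check that this cone is invariant under the ODE.

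For the lower face $b+c=0$ one computes $\dot b+\dot c=b^2+c^2+a(b+c)=b^2+c^2\geq0$, so $R_{ij}\geq0$ is preserved (Hamilton's classical three-dimensional fact). For the upper face, writing $g:=2c-(p-2)(a+b)$, a direct computation followed by the substitution $2c=(p-2)(a+b)$ valid on $g=0$ gives
\[
\dot g=2ab-(p-2)(a^2+b^2).
\]
When $p\leq2$ this is nonnegative because $a\geq b\geq0$ on the cone. When $2<p<3$ the ordering $c\leq b$ forces $a/b\leq\tfrac{4-p}{p-2}$, and with $t=a/b$ the desired inequality reduces to $(p-2)t^2-2t+(p-2)\leq0$, which holds on all of $[1,\tfrac{4-p}{p-2}]$ since this interval lies between the two roots of the quadratic. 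This is precisely where $p<3$ enters: at $p=3$ the cone collapses onto the Einstein locus. Hence $\dot g\geq0$ on the face and the cone is ODE-invariant.

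The main obstacle is exactly this upper-pinching ODE computation; the lower bound is classical and the convexity is routine. Having shown the closed cone is preserved, I would promote the non-strict upper inequality to the strict one $R_{ij}<\tfrac{R}{p}g_{ij}$ required by Theorem~\ref{norma1} using Hamilton's strong maximum principle, the reaction field pointing strictly inward except at the degenerate roots, while the lower bound $R_{ij}\geq0$ is already in the needed form. With \eqref{assumpa} thereby valid on all of $M^3\times[0,T)$, Theorem~\ref{norma1} yields the conclusion with $n=3$.
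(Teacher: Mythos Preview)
Your approach is correct and is essentially the same as the paper's: both reduce the corollary to Theorem~\ref{norma1} by showing that the pinching $0\le R_{ij}<\tfrac{R}{p}g_{ij}$ is preserved under the three-dimensional Ricci flow via Hamilton's tensor maximum principle, and the paper simply cites Theorem~9.6 of \cite{Hamilton} for this whereas you actually carry out the ODE verification (convexity, $O(3)$-invariance, and the sign of $\dot g$ on the boundary face). Your detailed check is sound; the only slightly informal step is the promotion to the strict upper inequality, which can be made clean by noting that compactness at $t=0$ gives a $p'\in(p,3)$ with $R_{ij}(0)\le\tfrac{R(0)}{p'}g_{ij}(0)$, and then preservation of the closed $p'$-cone yields $R_{ij}(t)\le\tfrac{R(t)}{p'}g_{ij}(t)<\tfrac{R(t)}{p}g_{ij}(t)$.
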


\begin{remark}
Note that if $p=2$, condition (\ref{initialc1}) is the same as
positive sectional curvatures of this closed manifold.
\end{remark}

\begin{proof}
According to Hamilton's maximum principle for tensors (see Theorem
9.6 in \cite{Hamilton}), for $1<p<3$, we conclude that $0\leq
R_{ij}<\tfrac{R}{p}g_{ij}$ is preserved under the Ricci flow.
Therefore the desired conclusion follows from Theorem \ref{norma1}.
\end{proof}

\section{First $p$-eigenvalue along normalized Ricci flow}\label{sec3b}
In this section, we will first discuss the differentiability for
$\lambda_{1,p}(\tilde{g}(\tilde{t}))$ under normalized Ricci flow by
means of the differentiability for $\lambda_{1,p}(g(t))$ under
unnormalized Ricci flow. Then for closed $2$-surfaces, we obtain
many monotonic quantities about the first eigenvalue of the
$p$-Laplace operator along the normalized Ricci flow  without any
curvature assumption, that is, Theorems \ref{coro17} and \ref{thm19}
in introduction.

At first we can apply the differentiability for
$\lambda_{1,p}(g(t))$ under the unnormalized Ricci flow to derive
the differentiability for $\lambda_{1,p}(\tilde{g}(\tilde{t}))$
under the normalized case.
\begin{theorem}\label{T100b}
Let $\tilde{g}(\tilde{t})$, $\tilde{t}\in[0,\infty)$, be a solution
of the normalized Ricci flow (\ref{norflow}) on a closed manifold
$M^n$ and let $\lambda_{1,p}(\tilde{t})$ be the first eigenvalue of
the $p$-Laplace operator of the metric $\tilde{g}(\tilde{t})$. If
the curvature assumptions of Theorem \ref{T101a} (Theorem
\ref{T10b}, Theorem \ref{norma1} or Corollary \ref{T103aa}) are
satisfied, then $\lambda_{1,p}(\tilde{t})$ is differentiable almost
everywhere along the normalized Ricci flow on $[0,\infty)$ in each
case.
\end{theorem}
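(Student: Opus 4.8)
The plan is to exploit the precise algebraic relationship between the normalized and unnormalized Ricci flows and to reduce the claim to the already-established almost-everywhere differentiability of $\lambda_{1,p}(g(t))$ in the unnormalized case. The central first step is to record how the first $p$-eigenvalue transforms under a constant rescaling of the metric. If $\bar{g}=cg$ for a positive constant $c$, then $|df|_{\bar g}=c^{-1/2}|df|_g$ while $d\mu_{\bar g}=c^{n/2}d\mu_g$, so each admissible Rayleigh quotient picks up exactly a factor $c^{-p/2}$; moreover the side constraint $\int_M|f|^{p-2}f\,d\mu=0$ is unaffected by multiplying the volume form by a constant, so the admissible class is unchanged. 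Taking the infimum gives
\begin{equation}
\lambda_{1,p}(cg)=c^{-p/2}\,\lambda_{1,p}(g).
\end{equation}
Applying this at each fixed time with $\tilde{g}(\tilde{t})=c(t)g(t)$, where $c(t)$ is constant in the space variable at each $t$, yields the pointwise identity
\begin{equation}
\lambda_{1,p}(\tilde{t})=c(t)^{-p/2}\,\lambda_{1,p}(t),
\end{equation}
with $t$ and $\tilde{t}$ linked by $\tilde{t}(t)=\int_0^t c(\tau)\,d\tau$ as in (\ref{avesca}).

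Next I would analyze the time change $t\mapsto\tilde{t}(t)$. Since $d\tilde{t}/dt=c(t)=\exp\bigl(\tfrac{2}{n}\int_0^t r(\tau)\,d\tau\bigr)>0$ and $c$ is smooth, $\tilde{t}(\cdot)$ is a smooth, strictly increasing bijection of $[0,T)$ onto $[0,\infty)$ with smooth inverse $\tilde{t}\mapsto t(\tilde{t})$, whose derivative $1/c$ is positive. Consequently $c(t(\tilde{t}))^{-p/2}$ is a smooth, nowhere-vanishing function of $\tilde{t}$, and the whole problem reduces to showing that $\tilde{t}\mapsto\lambda_{1,p}(t(\tilde{t}))$ is differentiable almost everywhere on $[0,\infty)$.

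Now I would invoke the hypothesis: under the curvature assumptions of Theorem \ref{T101a} (resp.\ Theorem \ref{T10b}, Theorem \ref{norma1}, Corollary \ref{T103aa}), the function $\lambda_{1,p}(t)$ is differentiable for almost every $t\in[0,T)$. Let $N\subset[0,T)$ be the null set of non-differentiability. Because the inverse time change $t(\cdot)$ is locally Lipschitz, with derivative $1/c$ continuous hence bounded on each compact subinterval, it maps the null set $N$ to a null set in $\tilde{t}$; equivalently $\{\tilde{t}:t(\tilde{t})\in N\}$ has measure zero. For every $\tilde{t}$ outside this null set, $\lambda_{1,p}$ is differentiable at the corresponding $t$ and $t(\cdot)$ is differentiable at $\tilde{t}$, so the chain rule gives differentiability of $\lambda_{1,p}(t(\tilde{t}))$ at $\tilde{t}$. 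Multiplying by the smooth factor $c(t(\tilde{t}))^{-p/2}$ preserves almost-everywhere differentiability, and the conclusion follows in each of the four cases.

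The step I expect to demand the most care is the measure-theoretic one: almost-everywhere differentiability is \emph{not} preserved under composition with an arbitrary homeomorphism, so it is essential to use that the time change is a smooth diffeomorphism with derivative bounded away from $0$ and $\infty$ on each compact subinterval, which is precisely what guarantees that null sets are sent to null sets. The only other point to verify is that the time change genuinely carries $[0,T)$ onto all of $[0,\infty)$, i.e.\ $\int_0^T c(\tau)\,d\tau=\infty$, so that the conclusion holds on the full normalized interval rather than a proper subinterval; this is consistent with the standing assumption that $\tilde{g}(\tilde{t})$ is defined for every $\tilde{t}\in[0,\infty)$.
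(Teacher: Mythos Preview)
Your proposal is correct and follows essentially the same route as the paper: both arguments establish the scaling identity $\lambda_{1,p}(\tilde g(\tilde t))=c(t)^{-p/2}\lambda_{1,p}(g(t))$ and then transfer the almost-everywhere differentiability from the unnormalized case via the smoothness of $c$. If anything, you are more careful than the paper, which simply asserts the conclusion once the scaling identity is in hand without discussing the time reparametrization $\tilde t\leftrightarrow t$ or the preservation of null sets.
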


\begin{proof}[Proof of Theorem \ref{T100b}]
Under the normalized Ricci flow $\tilde{g}(\tilde{t}):=c(t)g(t)$, we
have
\begin{equation}
\begin{aligned}\label{connor}
\lambda_{1,p}(\tilde{g}(\tilde{t}))=
\frac{\int_M|d\tilde{f}|^p_{\tilde{g}(\tilde{t})}d\tilde{\mu}}
{\int_M|\tilde{f}|^pd\tilde{\mu}}
=\frac{\int_M|d\tilde{f}|^p_{\tilde{g}(\tilde{t})}d\mu}
{\int_M|\tilde{f}|^pd\mu}
=c(t)^{-p/2}\frac{\int_M|d\tilde{f}|^p_{g(t)}d\mu}
{\int_M|\tilde{f}|^pd\mu},
\end{aligned}
\end{equation}
where $\tilde{f}$ is the eigenfunction for the first eigenvalue
$\lambda_{1,p}(\tilde{t})$ with respect to $\tilde{g}(\tilde{t})$,
which implies $\int_M |\tilde{f}|^{p-2}\tilde{f}d\tilde{\mu}=0$.
Since $\tilde{g}(\tilde{t}):=c(t)g(t)$, we also have
\[
\int_M |\tilde{f}|^{p-2}\tilde{f}d\mu=0.
\]
Consider the following quantity
\begin{equation}\label{quant1}
\frac{\int_M|d\phi|^p_{g(t)}d\mu} {\int_M|\phi|^pd\mu},
\end{equation}
where $\phi$ is any $C^1$ function. Clearly, if $\phi=\tilde{f}$,
then (\ref{quant1}) achieves its minimum. If it is not true, this
contradicts (\ref{connor}) by choosing $c(t)=1$. Therefore
(\ref{connor}) implies that
\[
\lambda_{1,p}(\tilde{g}(\tilde{t}))=c(t)^{-p/2}\cdot\lambda_{1,p}(g(t)).
\]
Note that $\lambda_{1,p}(g(t))$ is differentiable almost everywhere
under the curvature assumptions of Theorem \ref{T101a} (Theorem
\ref{T10b}, Theorem \ref{norma1} or Corollary \ref{T103aa}) and
$c(t)$ is a smooth function. Hence $\lambda_{1,p}(\tilde{t})$ is
differentiable almost everywhere in each case along the normalized
Ricci flow on $[0,\infty)$.
\end{proof}

\begin{remark}
For any $2$-surface, we claim that $\lambda_{1,p}(t)$ is
differentiable almost everywhere along the Ricci flow without any
curvature assumption (see Theorems \ref{coro17} and \ref{thm19}, and
Corollary \ref{T103ab}).
\end{remark}

In the rest of this section, we shall discuss the monotonic
quantities about the first eigenvalue of the $p$-Laplace operator
along the normalized Ricci flow on closed $2$-surfaces. From this,
we also see that $\lambda_{1,p}(t)$ is differentiable almost
everywhere along the normalized Ricci flow without any curvature
assumption.

We recall the following curvature estimates along the normalized
Ricci flow on closed surfaces (see Proposition 5.18 in
\cite{Chow-Knopf}).

\begin{proposition}\label{procurest}
For any solution $(M^2,g(t))$ of the normalized Ricci flow on a
closed surface, there exists a constant $C>0$ depending only on the
initial metric such that:
\begin{enumerate}
\item If $r<0$, then $r-Ce^{rt}\leq R\leq r+Ce^{rt}$.

\item If $r=0$, then $-\frac{C}{1+Ct}\leq R\leq C$.

\item If $r>0$, then $-Ce^{rt}\leq R\leq r+Ce^{rt}$.
\end{enumerate}
\end{proposition}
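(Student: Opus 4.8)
The plan is to reduce the whole statement to the scalar evolution equation for $R$ together with two applications of the maximum principle, and then to isolate the single genuinely hard ingredient. On a closed surface the normalized Ricci flow is conformal, $\partial_t g_{ij}=(r-R)g_{ij}$, and a direct computation gives
\[
\frac{\partial R}{\partial t}=\Delta R+R(R-r).
\]
Since the normalized flow preserves area and $\int_M R\,d\mu=4\pi\chi(M^2)$ by Gauss--Bonnet, the average $r=4\pi\chi(M^2)/\mathrm{Area}(M^2)$ is independent of $t$; in particular $\mathrm{sign}(r)=\mathrm{sign}(\chi(M^2))$, which is exactly why the three cases of the proposition correspond to $r<0$, $r=0$, $r>0$. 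Note also that the minimum never exceeds and the maximum never falls below the average, so $R_{\min}(0)\le r\le R_{\max}(0)$ always. The comparison object for the lower bounds is the Riccati ODE $\phi'=\phi(\phi-r)$, whose solutions I can write explicitly: $\phi(t)=r/(1-Ae^{rt})$ for $r\neq0$, and $\phi(t)=\phi(0)/(1-\phi(0)t)$ for $r=0$.

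For the lower bounds I would apply Hamilton's scalar maximum principle at a spatial minimum of $R$: there $\Delta R\ge0$, so $R_{\min}(t):=\min_M R(\cdot,t)$ satisfies $\tfrac{d}{dt}R_{\min}\ge R_{\min}(R_{\min}-r)$ in the barrier sense, whence $R_{\min}(t)\ge\phi(t)$ for the ODE solution with $\phi(0)=R_{\min}(0)$. Reading off the explicit solutions gives each stated lower bound directly: for $r=0$ with $R_{\min}(0)=-a<0$ one gets $\phi(t)=-a/(1+at)$, i.e. $R\ge-C/(1+Ct)$ with $C=a$; for $r<0$ the expansion $\phi(t)=r+rAe^{rt}+O(e^{2rt})$ yields $R\ge r-Ce^{rt}$; and for $r>0$ the solution issuing from $R_{\min}(0)\le r$ stays bounded below (tending to $0^-$ like $-\tfrac rA e^{-rt}$ when $R_{\min}(0)<0$), which is certainly $\ge-Ce^{rt}$. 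The point making all three comparisons valid on $[0,\infty)$ is that, because $R_{\min}(0)\le r$, the relevant ODE solutions never blow up from below.

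The upper bounds are the heart of the matter, and the one step I do not expect to follow from the maximum principle applied to $R$ itself: running the same comparison at a spatial maximum only gives $\tfrac{d}{dt}R_{\max}\le R_{\max}(R_{\max}-r)$, and the Riccati solution $r/(1-Ae^{rt})$ blows up in finite time precisely when $R_{\max}(0)$ exceeds the larger fixed point (i.e. $R_{\max}(0)>r$ for $r\ge0$, or $R_{\max}(0)>0$ for $r<0$), which Gauss--Bonnet forces generically. The correct device is Hamilton's potential function: let $f$ solve $\Delta f=R-r$ (evolving by $\partial_t f=\Delta f+rf$) and set $H=\Delta f+|\nabla f|^2=(R-r)+|\nabla f|^2$. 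The main computation is the evolution equation
\[
\frac{\partial H}{\partial t}=\Delta H-2|M_{ij}|^2+rH,\qquad M_{ij}=\nabla_i\nabla_j f-\tfrac12(\Delta f)g_{ij},
\]
in which the term $-2|M_{ij}|^2\le0$ makes $H$ a subsolution of the linear equation $\partial_t u=\Delta u+ru$. The maximum principle then gives $H_{\max}(t)\le H_{\max}(0)e^{rt}$, and since $R-r\le H$ (because $|\nabla f|^2\ge0$) we obtain $R\le r+Ce^{rt}$; for $r=0$ this reads $R\le H_{\max}(0)=C$, the flat-case bound. I expect deriving and checking this evolution equation for $H$ — which encodes the essential gradient control — to be the main technical obstacle, since it, rather than the bare scalar maximum principle, is what defeats the finite-time blow-up.

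Finally I would take $C$ to be the largest of the finitely many constants produced above; each depends only on $\max_M|R(0)|$ and on $\|df\|$ at time $0$, hence only on the initial metric. Matching the upper bound of the third paragraph with the case-by-case lower bound of the second then yields $r-Ce^{rt}\le R\le r+Ce^{rt}$ ($r<0$), $-C/(1+Ct)\le R\le C$ ($r=0$), and $-Ce^{rt}\le R\le r+Ce^{rt}$ ($r>0$), which is the proposition.
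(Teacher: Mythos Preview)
The paper does not prove this proposition at all: it is quoted as a known result with a reference to Proposition~5.18 of Chow--Knopf, \emph{The Ricci Flow: An Introduction}. Your outline is precisely the standard argument given in that reference --- ODE comparison via the scalar maximum principle for the lower bounds, and Hamilton's potential $f$ with $\Delta f=R-r$ together with the subsolution inequality for $H=\Delta f+|\nabla f|^2$ to defeat the Riccati blow-up for the upper bounds --- so there is nothing to compare and your sketch is correct.
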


Now using Proposition \ref{procurest}, we shall prove Theorem
\ref{coro17}. The method of proof is almost the same as that of
Theorem \ref{T10b}.
\begin{proof}[Proof of Theorem \ref{coro17}]

\emph{Step 1}: we first prove the case $p\geq 2$. Since $n=2$, by
(\ref{kkdd5}) of Lemma \ref{genlemma}, under the normalized Ricci
flow, we have
\begin{equation}
\begin{aligned}\label{kkddn2}
\frac{d}{dt}\lambda_{1,p}(f,t)\Big|_{t=t_0}
&=\lambda_{1,p}(f(t_0),t_0)\int_M|f|^pRd\mu
+\left(\frac p2-1\right)\int_M|df|^pR d\mu\\
&\,\,\,\,\,\,-\frac p2 r\lambda_{1,p}(f(t_0),t_0),
\end{aligned}
\end{equation}
where $f$ is defined by Lemma \ref{genlemma}.

Case 1: $\chi(M^2)<0$.

Note that the evolution of the scalar curvature $R$ on a closed
surface under the normalized Ricci flow is
\begin{equation}\label{evolutionsur}
\frac{\partial}{\partial t}R = \Delta R+R(R-r).
\end{equation}
By the Gauss-Bonnet theorem, $r$ is determined by the Euler
characteristic $\chi(M^2)$, i.e., $r=4\pi
\chi(M^2)/\mathrm{Area}{(M^2)}$. Now if $\chi(M^2)<0$, applying the
maximum principle to equation (\ref{evolutionsur}), we obtain sharp
lower bounds of
 the scalar curvature $R$:
\begin{equation}\label{RRRk}
R(x,t)\geq\displaystyle \frac{r}{1-(1-\tfrac{r}{\rho_0}) e^{rt}},
\quad \quad t\in[0,\infty).
\end{equation}
Note that in this setting, we need more accurate lower bounds than
Proposition \ref{procurest}. By inequality (\ref{RRRk}), we have
\begin{equation}\label{RRR2k}
R(x,t)>\frac{r}{1-(1-\tfrac{r}{\rho_0}) e^{rt}}-\epsilon, \quad
\quad t\in[0,\infty)
\end{equation}
for $\epsilon>0$ sufficiently small. Substituting this into the
above formula (\ref{kkddn2}), we obtain
\begin{equation}
\begin{aligned}\label{dianzhik}
\frac{d\lambda_{1,p}(f,t)}{dt}\Big|_{t=t_0}
&>\lambda_{1,p}(f(t_0),t_0)\left[\frac{r}{1-(1-\frac {r}{\rho_0}
)e^{rt_0}}-\frac p2 r\right]\\
&\,\,\,\,\,\,+\left(\frac p2-1\right)
\frac{r\lambda_{1,p}(f(t_0),t_0)}{1-(1-\frac{r}{\rho_0})e^{rt_0}}
-\frac{p\epsilon}{2}\lambda_{1,p}(f(t_0),t_0)\\
\,\,\,\,\,\,&=\frac p2\lambda_{1,p}(f(t_0),t_0)\left[
\frac{r}{1-(1-\frac{r}{\rho_0} )e^{rt_0}}-r-\epsilon\right].
\end{aligned}
\end{equation}
Since $\lambda_{1,p}(f,t)$ is a smooth function with respect to
$t$-variable, we have
\begin{equation}\label{dianzhi2k}
\frac{d}{dt}\lambda_{1,p}(f,t)>\frac p2\lambda_{1,p}(f(t),t)
\left[\frac{r}{1-(1-\frac {r}{\rho_0} )e^{rt}}-r-\epsilon\right]
\end{equation}
in any sufficiently small neighborhood of $t_0$. Integrating the
above inequality with respect to time $t$ on a sufficiently small
time interval $[t_1,t_0]$, we obtain
\begin{equation}
\begin{aligned}\label{integ1}
\ln&\lambda_{1,p}(f(t_0),t_0)-\ln\lambda_{1,p}(f(t_1),t_1)\\
&>\frac p2\left[\ln\frac{\frac{r}{\rho_0}
e^{rt_0}}{1-(1-\frac{r}{\rho_0})e^{rt_0}}-(r+\epsilon)t_0\right]
-\frac p2 \left[\ln\frac{\frac{r}{\rho_0}
e^{rt_1}}{1-(1-\frac{r}{\rho_0})e^{rt_1}}-(r+\epsilon)t_1\right]
\end{aligned}
\end{equation}
for any $t_1<t_0$ sufficiently close to $t_0$ (Note that $t_1$ may
equal to $0$). Since $\lambda_{1,p}(f(t_0),t_0)=\lambda_{1,p}(t_0)$
and $\lambda_{1,p}(f(t_1),t_1)\geq\lambda_{1,p}(t_1)$, then we have
\begin{equation}
\begin{aligned}\label{integ2}
\ln&\lambda_{1,p}(t_0)-\ln\lambda_{1,p}(t_1)\\
&>\frac p2\left[\ln\frac{\frac{r}{\rho_0}
e^{rt_0}}{1-(1-\frac{r}{\rho_0})e^{rt_0}}-(r+\epsilon)t_0\right]
-\frac p2 \left[\ln\frac{\frac{r}{\rho_0}
e^{rt_1}}{1-(1-\frac{r}{\rho_0})e^{rt_1}}-(r+\epsilon)t_1\right]
\end{aligned}
\end{equation}
for any $t_1<t_0$ sufficiently close to $t_0$. Since $t_0$ is
arbitrary, we conclude that
\begin{equation}\label{integ3}
\ln\lambda_{1,p}(t)-\frac p2\left[\ln\frac{\frac{r}{\rho_0}
e^{rt}}{1-(1-\frac{r}{\rho_0})e^{rt}}-(r+\epsilon)t\right]
\end{equation}
is increasing along the normalized Ricci flow. Taking
$\epsilon\rightarrow 0$, we know that
\begin{equation}\label{integtmm}
\ln\left[\lambda_{1,p}(t)\cdot\left(\frac{\rho_0}{r}
-\frac{\rho_0}{r}e^{rt}+e^{rt}\right)^{p/2}\right]
\end{equation}
is non-decreasing along the normalized Ricci flow. By the Lebesgue's
theorem, (\ref{integtmm}) is differentiable almost everywhere along
the normalized Ricci flow on $[0,\infty)$. We also note that
\[
\left[\frac{\rho_0}{r}-\frac{\rho_0}{r}e^{rt}+e^{rt}\right]^{p/2}
\]
is a smooth function. Hence $\lambda_{1,p}(t)$ is differentiable
almost everywhere along the normalized Ricci flow.

Case 2: $\chi(M^2)=0$.

If $\chi(M^2)=0$, i.e., $r=0$, by Proposition \ref{procurest}, we
have
\begin{equation}\label{RRRkevo2}
R(x,t)\geq-\frac{C}{1+Ct}.
\end{equation}
Substituting this into formula (\ref{kkddn2}) and applying similar
arguments above (in case of $\chi(M^2)\neq0$), we can obtain the
desired results.

Case 3: $\chi(M^2)>0$.

This proof is similar to the proof of Case 2. we still use
Proposition \ref{procurest} and formula (\ref{kkddn2}).

\vspace{0.5em}

\emph{Step 2}: we consider the case $1<p<2$. Since the method of
proof is similar to the previous discussions, we only give some key
computations.

Case 1: $\chi(M^2)<0$.

By (\ref{kkddn2}) and $R\leq r+Ce^{rt}$ of Proposition
\ref{procurest}, we have
\begin{equation}
\begin{aligned}\label{kkddn2oy}
\frac{d}{dt}\lambda_{1,p}(f,t)\Big|_{t=t_0}
&\geq\lambda_{1,p}(f(t_0),t_0)
\left[\frac{r}{1-(1-\frac{r}{\rho_0})e^{rt_0}}+\left(\frac
p2-1\right)\left(r+Ce^{rt_0}\right)-\frac p2 r\right]\\
&=\lambda_{1,p}(f(t_0),t_0)
\left[\frac{r}{1-(1-\frac{r}{\rho_0})e^{rt_0}}-r+\left(\frac
p2-1\right)Ce^{rt_0}\right]
\end{aligned}
\end{equation}
where $f$ is defined by Lemma \ref{genlemma}.

Following similar arguments above, we conclude that (\ref{kkddn2oy})
still holds in any sufficiently small neighborhood of $t_0$. Then
integrating this inequality with respect to time $t$ on a
sufficiently small time interval $[t_1,t_0]$, we obtain
\begin{equation}
\begin{aligned}\label{integp1}
\ln\lambda_{1,p}(f(t_0),t_0){-}\ln\lambda_{1,p}(f(t_1),t_1)
&\geq\left[\ln\frac{\frac{r}{\rho_0}}{1-(1-\frac{r}{\rho_0})e^{rt_0}}
{+}\left(\frac p2-1\right)\frac{C}{r}e^{rt_0}\right]\\
&\,\,\,\,\,\,-\left[\ln\frac{\frac{r}{\rho_0}}{1-(1-\frac{r}{\rho_0})e^{rt_1}}
{+}\left(\frac p2-1\right)\frac{C}{r}e^{rt_1}\right]
\end{aligned}
\end{equation}
for any $t_1<t_0$ sufficiently close to $t_0$. Note that
$\lambda_{1,p}(f(t_0),t_0)=\lambda_{1,p}(t_0)$ and
$\lambda_{1,p}(f(t_1),t_1)\geq\lambda_{1,p}(t_1)$. Hence we have
\begin{equation*}
\begin{aligned}
\ln&\left[\lambda_{1,p}(t_0)\cdot\left(\frac{\rho_0}{r}
-\frac{\rho_0}{r}e^{rt_0}+e^{rt_0}\right)\right]+\left(1-\frac
p2\right)\frac{C}{r}e^{rt_0}\\
&\geq\ln\left[\lambda_{1,p}(t_1)\cdot\left(\frac{\rho_0}{r}
-\frac{\rho_0}{r}e^{rt_1}+e^{rt_1}\right)\right]+\left(1-\frac
p2\right)\frac{C}{r}e^{rt_1}
\end{aligned}
\end{equation*}
for any $t_1<t_0$ sufficiently close to $t_0$. Since $t_0$ is
arbitrary, the result follows.

Case 2: $\chi(M^2)=0$.

Using $-\frac{C}{1+Ct}\leq R\leq C$ of Proposition \ref{procurest},
we have
\begin{equation}
\begin{aligned}\label{kddnmpj}
\frac{d}{dt}\lambda_{1,p}(f,t)\Big|_{t=t_0}&=\lambda_{1,p}(f(t_0),t_0)\int_M|
f|^p R d\mu+\left(\frac p2-1\right)\int_M|df|^pR d\mu\\
&\geq\lambda_{1,p}(f(t_0),t_0)\left[-\frac{C}{1+Ct_0}+\left(\frac
p2-1\right)C\right]
\end{aligned}
\end{equation}
where $f$ is defined by Lemma \ref{genlemma}. Then using similar
arguments above, we can obtain the desired results.

Case 3: $\chi(M^2)>0$.

Using $-Ce^{rt}\leq R\leq r+Ce^{rt}$ of Proposition \ref{procurest},
we get
\begin{equation}
\begin{aligned}\label{kkddnpde34}
\frac{d}{dt}\lambda_{1,p}(f,t)\Big|_{t=t_0}&=\lambda_{1,p}(f(t_0),t_0)\int_M|
f|^p R d\mu+\left(\frac p2-1\right)\int_M|df|^pR d\mu\\
&\,\,\,\,\,\,-\frac p2 r\lambda_{1,p}(f(t_0),t_0)\\
&\geq\lambda_{1,p}(f(t_0),t_0)\left[-r+\left(\frac
p2-2\right)Ce^{rt_0}\right]
\end{aligned}
\end{equation}
where $f$ is defined by Lemma \ref{genlemma}. Then using the
standard discussions above, we can obtain the desired results.
\end{proof}

\vspace{0.5em}

In the following we will finish the proof Theorem \ref{thm19}.
\begin{proof}[Proof of Theorem \ref{thm19}]
\emph{Step 1}: we first prove the case $p\geq 2$.

The case $\chi(M^2)=0$.

By Proposition \ref{procurest}, we have $R(x,t)\leq C$. Substituting
this into formula (\ref{kkddn2}),
\begin{equation}\label{case2}
\frac{d}{dt}\lambda_{1,p}(f,t)\Big|_{t{=}t_0}\leq\frac p2\cdot
C\lambda_{1,p}(f(t_0),t_0).
\end{equation}
Since $\lambda_{1,p}(f,t)$ is a smooth function with respect to
$t$-variable, we have
\begin{equation}\label{case2a}
\frac{d}{dt}\lambda_{1,p}(f,t)<\frac p2
\left(C+\epsilon\right)\lambda_{1,p}(f(t),t).
\end{equation}
for $\epsilon>0$ sufficiently small  in any sufficiently small
neighborhood of $t_0$. Integrating the above inequality with respect
to time $t$ on a sufficiently small time interval $[t_0,t_1]$, we
get
\begin{equation}\label{integcase1}
\ln\lambda_{1,p}(f(t_1),t_1)-\ln\lambda_{1,p}(f(t_0),t_0)<\frac p2
\left(C+\epsilon\right)t_1-\frac p2\left(C+\epsilon\right)t_0
\end{equation}
for any $t_1>t_0$ sufficiently close to $t_0$. Note that
$\lambda_{1,p}(f(t_0),t_0)=\lambda_{1,p}(t_0)$ and
$\lambda_{1,p}(f(t_1),t_1)\geq\lambda_{1,p}(t_1)$. So we have
\[
\ln\lambda_{1,p}(t_1)-\frac p2
\left(C+\epsilon\right)t_1<\ln\lambda_{1,p}(t_0)-\frac
p2\left(C+\epsilon\right)t_0
\]
for any $t_1>t_0$ sufficiently close to $t_0$. Since $t_0$ is
arbitrary, taking $\epsilon\rightarrow 0$, the result follows in the
case of $\chi=0$.

The case $\chi(M^2)\neq0$.

The method of the proof is similar to the case of $\chi(M^2)\neq0$.
Here we only give some key inequalities. Using $R\leq r+Ce^{rt}$ of
Proposition \ref{procurest} and formula (\ref{kkddn2}), we have
\begin{equation}
\begin{aligned}\label{kkddb4}
\frac{d}{dt}\lambda_{1,p}(f,t)\Big|_{t=t_0}\leq\frac
p2Ce^{rt_0}\lambda_{1,p}(f(t_0),t_0)
\end{aligned}
\end{equation}
where $f$ is defined by Lemma \ref{genlemma}. By similar arguments
the results follows.

\vspace{0.5em}

\emph{Step 2}: we consider the case $1<p<2$. Similarly, we only give
some key computations.

Case 1: $\chi(M^2)<0$.

Substituting (\ref{RRRk}) and $R\leq r+Ce^{rt}$ of Proposition
\ref{procurest} into formula (\ref{kkddn2}),
\begin{equation}\label{khhn2oy}
\frac{d}{dt}\lambda_{1,p}(f,t)\Big|_{t=t_0}\leq\lambda_{1,p}(f(t_0),t_0)
\left[\left(\frac p2-1\right)\cdot
\left(\frac{r}{1-(1-\frac{r}{\rho_0})e^{rt_0}}-r\right)
+Ce^{rt_0}\right]
\end{equation}
where $f$ is defined by Lemma \ref{genlemma}. Then using the
standard discussion as the case $\chi(M^2)=0$, we can obtain the
desired results.

Case 2: $\chi(M^2)=0$.

Substituting $-\frac{C}{1+Ct}\leq R\leq C$ of Proposition
\ref{procurest} into formula (\ref{kkddn2}), we have
\begin{equation}\label{casep12}
\frac{d}{dt}\lambda_{1,p}(f,t)\Big|_{t=t_0}\leq\lambda_{1,p}(f(t_0),t_0)
\left[\left(1-\frac p2\right)\cdot\frac{C}{1+Ct_0}+C\right]
\end{equation}
where $f$ is defined by Lemma \ref{genlemma}. Using similar
discussion above, the result follows.

Case 3: $\chi(M^2)>0$.

Using $-Ce^{rt}\leq R\leq r+Ce^{rt}$ of Proposition \ref{procurest},
we obtain
\begin{equation}\label{caseddo12}
\frac{d}{dt}\lambda_{1,p}(f,t)\Big|_{t=t_0}\leq\lambda_{1,p}(f(t_0),t_0)
\left[\left(1-\frac p2\right)\cdot r+\left(2-\frac
p2\right)Ce^{rt_0}\right]
\end{equation}
where $f$ is defined by Lemma \ref{genlemma}. Then the desired
results follow by the above similar discussions.
\end{proof}

\vspace{0.5em}

We should point out that for closed $2$-surfaces, we also have the
differentiability result along the unnormalized Ricci flow without
any curvature assumption.
\begin{corollary}\label{T103ab}
Let $g(t)$ and $\lambda_{1,p}(t)$ be the same as in Theorem
\ref{T101a}, where $n=2$. Then $\lambda_{1,p}(t)$ is differentiable
almost everywhere along the unnormalized Ricci flow.
\end{corollary}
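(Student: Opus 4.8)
The plan is to transfer the differentiability already established for the normalized flow back to the unnormalized flow via the explicit scaling relation between the two first eigenvalues, rather than redoing the monotonicity computations from scratch. First I would invoke Theorems \ref{coro17} and \ref{thm19}, which guarantee that $\lambda_{1,p}(\tilde{t})$ is differentiable almost everywhere along the normalized Ricci flow on $[0,\infty)$ in each of the three cases $\chi(M^2)<0$, $\chi(M^2)=0$, and $\chi(M^2)>0$, with no curvature hypothesis whatsoever. Since $n=2$, these theorems exhaust every closed surface, so the differentiability of $\lambda_{1,p}$ along the normalized flow is unconditional.

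Next I would recall the pointwise identity derived in the proof of Theorem \ref{T100b}, namely
\[
\lambda_{1,p}(\tilde{g}(\tilde{t}))=c(t)^{-p/2}\cdot\lambda_{1,p}(g(t)),
\]
which follows purely from the conformal rescaling $\tilde{g}(\tilde{t})=c(t)g(t)$ and the homogeneity of the Rayleigh quotient, and therefore holds with no curvature assumption. Solving for the unnormalized eigenvalue gives
\[
\lambda_{1,p}(g(t))=c(t)^{p/2}\cdot\lambda_{1,p}\bigl(\tilde{g}(\tilde{t}(t))\bigr),
\]
where $\tilde{t}(t)=\int_0^t c(\tau)\,d\tau$ and $c(t)=\exp\bigl(\tfrac2n\int_0^t r(\tau)\,d\tau\bigr)$.

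The remaining step is to check that this algebra preserves almost-everywhere differentiability. Because $c(t)>0$ is smooth, the reparametrization $t\mapsto\tilde{t}(t)$ is a smooth, strictly increasing diffeomorphism onto its image, with smooth inverse; in particular it is locally bi-Lipschitz and sends Lebesgue-null sets to null sets in both directions. Hence the set of $\tilde{t}$ at which $\lambda_{1,p}(\tilde{g}(\tilde{t}))$ fails to be differentiable, which is null by the first step, pulls back to a null set of $t$-values, so $t\mapsto\lambda_{1,p}\bigl(\tilde{g}(\tilde{t}(t))\bigr)$ is differentiable almost everywhere by the chain rule. Multiplying by the smooth positive factor $c(t)^{p/2}$ cannot enlarge the exceptional set, and the product rule then shows that $\lambda_{1,p}(g(t))$ is differentiable almost everywhere along the unnormalized flow.

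The only point requiring genuine care — the main obstacle, such as it is — is the measure-theoretic bookkeeping in the last step: one must verify that a smooth change of the time variable with nowhere-vanishing derivative really does preserve the ``almost everywhere'' qualifier. This is a standard consequence of the fact that a $C^1$ diffeomorphism maps null sets to null sets, combined with the chain rule for differentiability at individual points, so no new analytic input beyond Theorems \ref{coro17}, \ref{thm19}, and \ref{T100b} is needed.
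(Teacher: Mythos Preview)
Your proposal is correct and follows essentially the same route as the paper: invoke Theorems~\ref{coro17} and~\ref{thm19} for almost-everywhere differentiability along the normalized flow, then transfer it to the unnormalized flow via the scaling identity $\lambda_{1,p}(\tilde g(\tilde t))=c(t)^{-p/2}\lambda_{1,p}(g(t))$ from the proof of Theorem~\ref{T100b}. The paper compresses this into a single sentence (``the conclusion follows from the same argument as in the proof of Theorem~\ref{T100b}''), while you spell out the measure-theoretic point that the smooth reparametrization $t\mapsto\tilde t(t)$ preserves null sets; this extra care is justified and does not depart from the paper's argument.
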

\begin{proof}
For closed 2-surfaces, we know that the first eigenvalue of the
$p$-Laplace operator is differentiable almost everywhere along the
normalized Ricci flow. Hence the conclusion follows from the same
argument as in the proof of Theorem \ref{T100b}.
\end{proof}

\section{$p$-eigenvalue comparison-type theorem}\label{comparison}
In Riemannian geometry, a convenient way of understanding a general
Riemannian manifold is by comparison theorems. And many comparison
theorems have been obtained, such as the Hessian comparison theorem,
the Laplace comparison theorem, the volume comparison theorem, etc..

In this section, we will give another interesting comparison-type
theorem on a closed surface with the Euler characteristic
$\chi(M^2)< 0$, which is motivated by the work of J. Ling
\cite{Ling}. However, our proof may be different from Ling's.
Because we do not know the eigenvalue or eigenfunction
differentiability under the Ricci flow. Fortunately we can follow
similar arguments above and obtain our desired result.

Let $(M^2,g)$ be a closed surface. Let $K_g$, $\kappa_g$, $\mathrm
{Area}_g(M^2)$ denote the Gauss curvature, the minimum of the Gauss
curvature, the area of the surface, respectively. $\lambda_{1,p}(g)$
denotes the first eigenvalue of the $p$-Laplace operator $(p\geq2)$
with respect to the metric $g$. We now prove the comparison-type
theorem for $\lambda_{1,p}(g)$ on a closed surface with its Euler
characteristic is negative.

\begin{proof}[Proof of Theorem \ref{T201}]
Let $g(t)$ be the solution of the normalized Ricci flow on a closed
surface
\begin{equation}
\begin{aligned}\label{frac15}
\frac{\partial g(t)}{\partial t}=(r-R)g(t)
\end{aligned}
\end{equation}
with the initial condition $g(0)=g$, where $R$ is the scalar
curvature of the metric $g(t)$ and $r= {\int_{M^2}
Rd\mu}\big/{\int_{M^2} d\mu}$, which keeps the area of the surface
constant. In fact, from ({\ref{frac15}}) we have
\[
\frac{d}{dt}(d\mu)=(r-R)d\mu
\]
and
\[
\frac{d}{dt}\mathrm {Area}_{g(t)}(M^2)=\frac{d}{dt}\int_{M^2}
d\mu=\int_{M^2}(r-R)d\mu=0.
\]
Set $A:=\mathrm {Area}_{g(t)}(M^2)=\mathrm {Area}_{g}(M^2)$.
Obviously, along the normalized Ricci flow, the area $A$ remains
constant independent of time. By the Gauss-Bonnet theorem, $r$ is
determined by the Euler characteristic $\chi(M^2)$, i.e., $r=4\pi
\chi(M^2)/A<0$. So we know that $r$ is a negative constant and the
lower bounds of the scalar curvature $R$ are also negative.
Meanwhile, according to Theorem E in introduction, the metric $g(t)$
converges to a smooth metric $\bar{g}(=g(\infty))$ of constant Gauss
curvature $r/2$.

Note that $R/2$ is the Gauss curvature $K$ of the metric $g(t)$. Let
$\rho_0<0$ be the minimum of $R(0)$, i.e.,
\[
R(0)=2K(0)\geq \rho_0.
\]
Since $\chi(M^2)<0$, by Theorem \ref{coro17}, we know that
\begin{equation}\label{conv1}
\lambda_{1,p}(t)\cdot\left[\frac{\rho_0}{r}
-\frac{\rho_0}{r}e^{rt}+e^{rt}\right]^{p/2}
\end{equation}
is increasing along the normalized Ricci flow on $[0,\infty)$, where
$\rho_0=\inf\limits_{M^2} R(0)$.

Since that $r<0$ and $p\geq 2$, taking $t\rightarrow\infty$ in
(\ref{conv1}) and noticing that $\lambda_{1,p}(t)$ is continuous, we
conclude that
\[
\lambda_{1,p}(\infty)\geq\lambda_{1,p}(0)\cdot\left(\frac
{r}{\rho_0}\right)^{p/2}.
\]
Note that the metric $\bar{g}(=g(\infty))$ has constant Gauss
curvature $r/2$. So we have $\kappa_{\bar{g}}=r/2$. By the
definition for $\rho_0$, we also have $\rho_0=2\kappa_g$. Therefore
we conclude the following inequality
\[
\frac{\lambda_{1,p}(\bar{g})}{\lambda_{1,p}(g)}\geq\left(\frac
{\kappa_{\bar{g}}}{\kappa_g}\right)^{p/2}.
\]
This completes the proof of this theorem.
\end{proof}

\begin{remark}
(1). By Theorem \ref{coro17} and Theorem \ref{thm19}, using the same
method above, if $\chi(M^2)<0$ , we can also get some rough
estimates
\[
\frac{\lambda_{1,p}(\bar{g})}{\lambda_{1,p}(g)}\geq\exp\left[\left(1-\frac
p2\right)\frac Cr\right]\cdot\frac {\kappa_{\bar{g}}}{\kappa_g}\quad
(1<p<2);
\]
and
\[
\frac{\lambda_{1,p}(\bar{g})}{\lambda_{1,p}(g)}\leq e^{-\frac
Cr}\cdot\left(\frac {\kappa_{\bar{g}}}{\kappa_g}\right)^{\frac
p2-1}\,\,\,(1<p<2),\quad\quad\quad
\frac{\lambda_{1,p}(\bar{g})}{\lambda_{1,p}(g)}\leq\exp\left(-\frac
p2\cdot\frac Cr\right)\,\,\, (p\geq 2),
\]
where $C>0$ is a constant depending only on the metric $g$ and
$r=2\kappa_{\bar{g}}$.

(2). It would be interesting to find out if there exists a similar
comparison-type result for high dimensional closed manifolds. It
seems to be difficult to deal with the high-dimensional case. On the
other hand, can one have a similar result as theorem \ref{T201} if
one removes the condition: $\chi(M^2)<0$?

(3). Though we do not follow J. Ling's proof, the idea of proof
partly belongs to his. When $p=2$, our result reduces to J. Ling's
(see \cite{Ling}, Theorem 1.1).
\end{remark}

\section{First $p$-eigenvalue along general evolving metrics}\label{genmetr}
Following similar arguments in the proof of Theorem \ref{T101a}, in
this section, we discuss the monotonicity and differentiability for
the first eigenvalue of the $p$-Laplace with respect to general
evolving Riemannian metrics.

Let $(M^n, g(t))$ be a smooth one-parameter family of compact
Riemannian manifolds without boundary evolving for $t\in[0, T)$ by
\begin{equation}\label{genflow2}
\frac{\partial}{\partial t}g_{ij}=-2h_{ij}
\end{equation}
with $g(0)=g_0$. Let $H:=\mathrm{tr}\,h=g^{ij}h_{ij}$.

\vspace{0.5em}

We first have a analog of Proposition \ref{propo1} in Section
\ref{sec3}.
\begin{proposition}\label{genpropo1}
Let $g(t)$, $t\in[0,T)$, be a smooth family of complete Riemannian
metrics on a closed manifold $M^n$ satisfying (\ref{genflow2}) and
let $\lambda_{1,p}(t)$ be the first eigenvalue of the $p$-Laplace
operator $(p>1)$ under the evolving metrics (\ref{genflow2}). For
any $t_1, t_2 \in[0,T)$ with $t_2\geq t_1$, we have
\begin{equation}\label{genfrded}
\lambda_{1,p}(t_2)\geq \lambda_{1,p}(t_1)+\int^{t_2}_{t_1} \mathcal
{L}(g(\xi),f(\xi))d\xi,
\end{equation}
where
\begin{equation}\label{genfracww}
\mathcal{L}(g(t),f(t)):=p\int_M|df|^{p-2}h(\nabla f,\nabla f)d\mu-
p\int_M\Delta_p f\frac{\partial f}{\partial t}d\mu-\int_M
|df|^pHd\mu
\end{equation}
and where $f(t)$ is any $C^{\infty}$ function satisfying the
restrictions $\int_M|f(t)|^p d\mu_{g(t)}=1$ and $\int_M
|f(t)|^{p-2}f(t)d\mu_{g(t)}=0$, such that at time $t_2$, $f(t_2)$ is
the corresponding eigenfunction of $\lambda_{1,p}(t_2)$.
\end{proposition}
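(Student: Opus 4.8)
The plan is to transcribe the proof of Proposition \ref{propo1} almost verbatim, observing that that argument never used the specific identity $h_{ij}=R_{ij}$ beyond two evolution formulas and the structure of the volume-form evolution. First I would set
\[
G(g(t),f(t)):=\int_M|df(t)|_{g(t)}^pd\mu_{g(t)},
\]
and, for a fixed $t_2\in(0,T)$, construct a smooth one-parameter family $f(t)$ of admissible test functions agreeing at $t_2$ with a chosen $L^p$-normalized eigenfunction $f_2=f(t_2)$ of $\lambda_{1,p}(t_2)$ and satisfying both constraints $\int_M|f(t)|^pd\mu=1$ and $\int_M|f(t)|^{p-2}f(t)d\mu=0$ for every $t$.

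The key point is that the construction (\ref{constr1})--(\ref{constr12}) carries over unchanged to the flow (\ref{genflow2}). Indeed, set
\[
h(t)=f_2\left[\frac{\det(g_{ij}(t_2))}{\det(g_{ij}(t))}\right]^{\frac{1}{2(p-1)}},
\qquad
f(t)=\frac{h(t)}{\left(\int_M|h(t)|^pd\mu\right)^{1/p}}.
\]
Since $d\mu_{g(t)}=\sqrt{\det g(t)}\,dx$, a direct check gives the density identity $|h|^{p-2}h\,d\mu_{g(t)}=|f_2|^{p-2}f_2\,d\mu_{g(t_2)}$, whence $\int_M|h(t)|^{p-2}h(t)d\mu=\int_M|f_2|^{p-2}f_2\,d\mu_{g(t_2)}=0$ for all $t$; after normalization both constraints hold, and at $t=t_2$ we recover $f(t_2)=f_2$, so $G(g(t_2),f(t_2))=\lambda_{1,p}(t_2)$. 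Crucially this computation only sees the volume form, and under (\ref{genflow2}) we have $\frac{\partial}{\partial t}\log\det g=g^{ij}\frac{\partial}{\partial t}g_{ij}=-2H$, so no feature of $h_{ij}$ beyond its trace enters.

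Next I would differentiate $G$ in $t$. Replacing (\ref{eqse12}), the two identities I need are
\[
\frac{\partial}{\partial t}|df|^p=p|df|^{p-2}\left(h_{ij}f_if_j+f_i\frac{\partial f_i}{\partial t}\right),
\qquad
\frac{\partial}{\partial t}(d\mu)=-H\,d\mu,
\]
both immediate from $\frac{\partial}{\partial t}g_{ij}=-2h_{ij}$ and $H=g^{ij}h_{ij}$. Integrating by parts in the cross term via $p\int_M|df|^{p-2}f_i\nabla_i(\partial_t f)\,d\mu=-p\int_M\Delta_pf\,\partial_t f\,d\mu$ produces exactly $\frac{d}{dt}G=\mathcal{L}(g(t),f(t))$ with $\mathcal{L}$ as in (\ref{genfracww}). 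Integrating this from $t_1$ to $t_2$ and combining $G(g(t_2),f(t_2))=\lambda_{1,p}(t_2)$ with $G(g(t_1),f(t_1))\ge\lambda_{1,p}(t_1)$---the latter because $f(t_1)$ is admissible but need not be extremal---yields (\ref{genfrded}).

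The argument presents no genuine obstacle, being a direct generalization; the only step requiring care is confirming that the interpolating family still obeys the orthogonality constraint $\int_M|f|^{p-2}f\,d\mu=0$. I would emphasize that this rests solely on the determinant evolution $\frac{\partial}{\partial t}\log\det g=-2H$, a feature shared by every flow of the form (\ref{genflow2}), and not on any property special to the Ricci tensor.
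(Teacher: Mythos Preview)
Your proposal is correct and follows exactly the approach the paper intends: the paper's proof of Proposition~\ref{genpropo1} simply says it is ``similar to the proof of Proposition~\ref{propo1}'' and omits the details, and you have faithfully transcribed that argument with $R_{ij}\mapsto h_{ij}$, $R\mapsto H$. Your explicit verification that the density identity $|h|^{p-2}h\,d\mu_{g(t)}=|f_2|^{p-2}f_2\,d\mu_{g(t_2)}$ depends only on the volume-form evolution (hence only on the trace $H$) is a welcome clarification of the step the paper summarizes as ``we can easily check''.
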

\begin{proof}
The proof is by straightforward computation, which is similar to the
proof of Proposition \ref{propo1}. Here we omit those details.
\end{proof}

Using this proposition, we have
\begin{theorem}\label{genT101}
Let $g(t)$ and $\lambda_{1,p}(t)$ be the same as Proposition
\ref{genpropo1}. If there exists a nonnegative constant $\epsilon$
such that
\begin{equation}\label{genassu1}
h_{ij}-\tfrac Hp g_{ij}\geq -\epsilon g_{ij}\quad
\quad\mathrm{in}\quad M\times[0,T)
\end{equation}
and
\begin{equation}\label{genassu2}
H>p\cdot\epsilon\quad \quad\mathrm{in}\quad M\times[0,T),
\end{equation}
then $\lambda_{1,p}(t)$ is strictly increasing and therefore
differentiable almost everywhere along the evolving Riemannian
metrics (\ref{genflow2}) on $[0,T)$.
\end{theorem}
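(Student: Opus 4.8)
The plan is to mimic the proof of Theorem \ref{T101a} almost verbatim, replacing the Ricci tensor $R_{ij}$ by $h_{ij}$, the scalar curvature $R$ by $H$, and Proposition \ref{propo1} by its general analogue Proposition \ref{genpropo1}. By the lower bound (\ref{genfrded}) it suffices to prove that the integrand $\mathcal{L}(g(t),f(t))$ defined in (\ref{genfracww}) is strictly positive at each fixed time $t_2\in(0,T)$, when $f(t_2)$ is chosen to be the eigenfunction for $\lambda_{1,p}(t_2)$; strict monotonicity and hence almost-everywhere differentiability will then follow exactly as before.

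First I would evaluate $\mathcal{L}(g(t_2),f(t_2))$ at the eigenfunction. Using the Euler--Lagrange equation (\ref{elag}), $\Delta_p f(t_2)=-\lambda_{1,p}(t_2)|f(t_2)|^{p-2}f(t_2)$, the middle term of (\ref{genfracww}) becomes $p\lambda_{1,p}(t_2)\int_M|f|^{p-2}f\,\partial_t f\,d\mu$. To handle this I would differentiate the normalization $\int_M|f(t)|^p\,d\mu_{g(t)}=1$ in $t$; since (\ref{genflow2}) gives $\partial_t(d\mu)=-H\,d\mu$, this produces the constraint identity
\[
p\int_M|f|^{p-2}f\,\partial_t f\,d\mu=\int_M|f|^pH\,d\mu,
\]
which is the exact analogue of (\ref{xianz}). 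Substituting and regrouping the gradient terms as in (\ref{guoch2}) yields
\[
\mathcal{L}(g(t_2),f(t_2))=\lambda_{1,p}(t_2)\int_M|f|^pH\,d\mu+\int_M|df|^{p-2}\bigl(ph_{ij}-Hg_{ij}\bigr)f_if_j\,d\mu.
\]

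Next I would insert the two hypotheses. Rewriting (\ref{genassu1}) as $ph_{ij}-Hg_{ij}\geq-p\epsilon\,g_{ij}$ bounds the second integral below by $-p\epsilon\int_M|df|^p\,d\mu$, and since $f(t_2)$ is the normalized eigenfunction we have $\int_M|df|^p\,d\mu=\lambda_{1,p}(t_2)$. Combining these and using $\int_M|f|^p\,d\mu=1$ once more gives
\[
\mathcal{L}(g(t_2),f(t_2))\geq\lambda_{1,p}(t_2)\int_M|f|^p\bigl(H-p\epsilon\bigr)\,d\mu,
\]
and the hypothesis (\ref{genassu2}), $H>p\epsilon$ on all of $M\times[0,T)$, together with $\lambda_{1,p}(t_2)>0$, forces $\mathcal{L}(g(t_2),f(t_2))>0$.

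Finally, because $f(t)$ is smooth in $t$, the quantity $\mathcal{L}(g(\xi),f(\xi))$ is continuous, so its positivity at $t_2$ persists on a small neighborhood; hence $\int_{t_1}^{t_2}\mathcal{L}(g(\xi),f(\xi))\,d\xi>0$ for $t_1<t_2$ close to $t_2$, and Proposition \ref{genpropo1} gives $\lambda_{1,p}(t_2)>\lambda_{1,p}(t_1)$. As $t_2$ is arbitrary, $\lambda_{1,p}$ is strictly increasing, and almost-everywhere differentiability is immediate from Lebesgue's theorem on monotone functions. I do not expect a genuine obstacle here: the essential simplification over Theorem \ref{T101a} is that (\ref{genassu2}) is postulated directly for every $t$, so the strong-maximum-principle step (\ref{remain}) that was needed to propagate $R>p\epsilon$ from the initial slice is entirely bypassed. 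The only place demanding care is the sign and index bookkeeping in the constraint identity, which hinges solely on the volume evolution $\partial_t(d\mu)=-H\,d\mu$.
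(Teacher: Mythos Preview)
Your proposal is correct and is exactly the approach the paper intends: the authors write only ``This proof is similar to that of the previous theorems,'' and you have faithfully executed the analogue of the proof of Theorem~\ref{T101a} with $R_{ij}\to h_{ij}$, $R\to H$, Proposition~\ref{propo1} $\to$ Proposition~\ref{genpropo1}, including the correct observation that the strong-maximum-principle step (\ref{remain}) is unnecessary because (\ref{genassu2}) is assumed on the whole time interval.
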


\begin{proof}
This proof is similar to that of the previous theorems.
\end{proof}

\begin{remark}
(1). Assumptions (\ref{genassu1}) and (\ref{genassu2}) may not be
valid sometimes for some special curvature flow. For example, for
the normalized Ricci flow, the assumptions (\ref{genassu1}) and
(\ref{genassu2}) are not hold in general.

(2). This theorem may be compared to Theorem \ref{T101a} of this
paper. In fact, let $(M^n, g(t))$ be a complete solution of the
unnormalized  Ricci flow on $[0,T)$. This corresponds to
$h_{ij}=R_{ij}$ and $H=R$ in Theorem \ref{genT101}.
\end{remark}

\vspace{0.5em}

In the following, a general version of Lemma \ref{genlemma} is
stated as follows.
\begin{lemma}\label{genlemma2}
If $\lambda_{1,p}(t)$ is the first eigenvalue of
$\Delta_{p_{g(t)}}$, whose metric satisfying equation
(\ref{genflow2}) and $f(t_0)$ is the corresponding eigenfunction of
$\lambda_{1,p}(t)$ at time $t_0$, then we have
\begin{equation}\label{dd5mn}
\frac{d}{dt}\lambda_{1,p}(f,t)\Big|_{t=t_0}
=\lambda_{1,p}(f(t_0),t_0)\int_M|f|^pHd\mu
+\int_M|df|^{p-2}(ph_{ij}{-}Hg_{ij})f_i f_j d\mu,
\end{equation}
where $f(t)$ is any $C^{\infty}$ function satisfying the
restrictions $\int_M|f(t)|^p d\mu_{g(t)}=1$ and $\int_M
|f(t)|^{p-2}f(t)d\mu_{g(t)}=0$, such that at time $t_0$, $f(t_0)$ is
the corresponding eigenfunction of $\lambda_{1,p}(t_0)$.
\end{lemma}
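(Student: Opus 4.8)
The plan is to carry out, for the general evolution $\partial_t g_{ij}=-2h_{ij}$, exactly the computational scheme already used for Lemma \ref{genlemma}. First I would record the two evolution identities produced by this flow. Since $\partial_t g^{ij}=2h^{ij}$ and $f_i=\partial_i f$ is metric-independent, differentiating $|df|^2=g^{ij}f_i f_j$ gives
\[
\frac{\partial}{\partial t}|df|^p=p|df|^{p-2}\bigl(h_{ij}f_i f_j+f_i\tfrac{\partial}{\partial t}(f_i)\bigr),
\]
the term $f_i\,\partial_t(f_i)$ being $\langle\nabla f,\nabla(\partial_t f)\rangle$, while $\partial_t(d\mu)=-H\,d\mu$ because $g^{ij}\partial_t g_{ij}=-2H$. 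These are the precise analogues of the identities appearing in the proofs of Proposition \ref{genpropo1} and Lemma \ref{genlemma}.

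Next I would differentiate $\lambda_{1,p}(f,t)=\int_M|df|^p d\mu$ along the smooth family $f(t)$, integrating the term $p\int_M|df|^{p-2}\langle\nabla f,\nabla(\partial_t f)\rangle d\mu$ by parts to convert it into $-p\int_M\Delta_p f\,\partial_t f\,d\mu$. This reproduces the expression $\mathcal{L}(g(t),f(t))$ of (\ref{genfracww}); indeed, the computation inside Proposition \ref{genpropo1} already yields $\frac{d}{dt}\lambda_{1,p}(f,t)=\mathcal{L}(g(t),f(t))$, so this step may simply be quoted.

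The substantive step is to specialize to $t=t_0$, where $f(t_0)$ is the eigenfunction and hence satisfies $\Delta_p f=-\lambda_{1,p}(t_0)|f|^{p-2}f$. Inserting this turns the middle term into $p\lambda_{1,p}(t_0)\int_M|f|^{p-2}f\,\partial_t f\,d\mu$. To eliminate the remaining $\partial_t f$, I would differentiate the normalization $\int_M|f|^p d\mu=1$ in $t$, which gives $p\int_M|f|^{p-2}f\,\partial_t f\,d\mu=\int_M|f|^p H\,d\mu$. Substituting this and merging the surviving curvature terms via $|df|^p=|df|^{p-2}g_{ij}f_i f_j$ produces the stated formula (\ref{dd5mn}).

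I do not expect a genuine obstacle here, since the argument is a direct computation. The one point demanding care, and the conceptual heart of the method, is that we differentiate the auxiliary functional $\lambda_{1,p}(f,\cdot)$ along the constructed smooth family $f(t)$ and invoke the eigenfunction equation only at the single time $t_0$; we never differentiate the genuine eigenvalue $\lambda_{1,p}(t)$ nor assume its eigenfunction varies smoothly in $t$. Keeping straight the bookkeeping of the $\partial_t f$ contributions, namely their appearance through $\nabla(\partial_t f)$, their conversion to $\Delta_p f\,\partial_t f$ by integration by parts, and their eventual cancellation using the eigenfunction relation together with the differentiated constraint, is the only place where a sign or a factor could go astray.
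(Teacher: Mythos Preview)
Your proposal is correct and follows exactly the approach the paper intends: the paper does not write out a proof of Lemma~\ref{genlemma2} but simply presents it as the general-flow version of Lemma~\ref{genlemma}, and your computation is precisely the adaptation of that proof with $R_{ij}$, $R$ replaced by $h_{ij}$, $H$. The evolution identities, the integration by parts, the use of the eigenfunction equation at $t_0$, and the elimination of $\partial_t f$ via the differentiated normalization are all carried out just as in the paper's proof of Lemma~\ref{genlemma} (and Proposition~\ref{propo1}).
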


In the same way as before, we can use this lemma to construct some
monotonic quantities about the first eigenvalue of the $p$-Laplace
operator along general evolving Riemannian metrics under some
curvature assumptions.

\vspace{1em}

Next we turn to study a particular geometric flow, i.e., Yamabe
flow. We will apply Theorem \ref{genT101} and Lemma \ref{genlemma2}
to the Yamabe flow. When $p=2$, the first author in \cite{Wu}
obtained some interesting results. The Yamabe flow was still
introduced by R.S. Hamilton, which is defined by
\begin{equation}
\begin{aligned}\label{Ypad}
\frac{\partial }{\partial t}g(x,t)&=-R(x,t)g(x,t),\\
g(x,0)&=g_0(x)
\end{aligned}
\end{equation}
where $R$ denotes the scalar curvature of $g(t)$. The normalized
Yamabe flow is defined by
\begin{equation}
\begin{aligned}\label{defL}
\frac{\partial }{\partial t}g(x,t)&=\left(r(t)-R(x,t)\right)g(x,t),\\
g(x,0)&=g_0(x)
\end{aligned}
\end{equation}
where $r(t):=\int_M R d\mu \big/\int_M d\mu$ is the average scalar
curvature of the metric $g(t)$.

For the unnormalized Yamabe flow, we have the following proposition.
\begin{proposition}\label{genYam1}
In Proposition \ref{genpropo1}, we replace general evolving metrics
by the unnormalized Yamabe flow (\ref{Ypad}). Then for any
$t_1,t_2\in[0,T)$ with $t_2\geq t_1$,
\begin{equation}\label{genYa1}
\lambda_{1,p}(t_2)\geq \lambda_{1,p}(t_1)+\int^{t_2}_{t_1} \mathcal
{L}(g(\xi),f(\xi))d\xi,
\end{equation}
where
\begin{equation}\label{genYam2}
\mathcal{L}(g(t),f(t)):=\frac{p-n}{2}\int_M|df|^p Rd\mu-p\int_M
\Delta_p f\frac{\partial f}{\partial t} d\mu.
\end{equation}
\end{proposition}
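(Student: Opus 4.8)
The plan is to recognize the unnormalized Yamabe flow (\ref{Ypad}) as a special case of the general evolving metrics (\ref{genflow2}) and then invoke Proposition \ref{genpropo1} directly. Comparing $\frac{\partial}{\partial t}g_{ij}=-Rg_{ij}$ with $\frac{\partial}{\partial t}g_{ij}=-2h_{ij}$, I would set $h_{ij}=\frac{R}{2}g_{ij}$, so that $H=\mathrm{tr}\,h=g^{ij}h_{ij}=\frac{n}{2}R$. Under these identifications the inequality (\ref{genYa1}) is precisely (\ref{genfrded}) of Proposition \ref{genpropo1}, so the only remaining task is to rewrite the functional $\mathcal{L}$ of (\ref{genfracww}) in this setting.

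To do so I would handle the two curvature terms separately. Because $h=\frac{R}{2}g$, we have $h(\nabla f,\nabla f)=\frac{R}{2}|df|^2$, so the first summand of (\ref{genfracww}) becomes $p\int_M|df|^{p-2}\cdot\frac{R}{2}|df|^2\,d\mu=\frac p2\int_M|df|^pR\,d\mu$. The last summand, using $H=\frac n2 R$, equals $-\int_M|df|^pH\,d\mu=-\frac n2\int_M|df|^pR\,d\mu$. Adding these two contributions collapses them into $\frac{p-n}{2}\int_M|df|^pR\,d\mu$, while the middle term $-p\int_M\Delta_p f\,\frac{\partial f}{\partial t}\,d\mu$ is left untouched. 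This yields exactly the expression (\ref{genYam2}) for $\mathcal{L}$, and the inequality then follows immediately from Proposition \ref{genpropo1}.

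There is no genuine obstacle here: the proposition is a direct specialization of Proposition \ref{genpropo1}, and the proof reduces to a single substitution followed by combining two scalar-curvature integrals. The only point demanding a little care is the trace computation—correctly obtaining $H=\frac n2 R$ (rather than $\frac R2$) from the conformal form $h=\frac{R}{2}g$—since it is exactly the interplay between the factor $p$ multiplying the first term and the factor $n$ entering through $H$ that produces the coefficient $\frac{p-n}{2}$ of the surviving curvature integral.
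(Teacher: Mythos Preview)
Your proposal is correct and follows exactly the paper's approach: the paper's proof consists of the single line ``Substituting $h_{ij}=\tfrac R2 g_{ij}$ into Proposition \ref{genpropo1}, the result follows.'' You have simply spelled out the substitution in more detail, and your computation of $H=\tfrac{n}{2}R$ and the resulting coefficient $\tfrac{p-n}{2}$ is correct.
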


\begin{proof}
Substituting $h_{ij}=\tfrac R2 g_{ij}$ into Proposition
\ref{genpropo1}, the result follows.
\end{proof}

Using this proposition, we have
\begin{theorem}\label{ThmYam}
Let $g(t)$ and $\lambda_{1,p}(t)$ be the same as Proposition
\ref{genYam1}, where we assume $p\geq n$. If
\begin{equation}
R\geq0\quad \mathrm{and}\quad R\not\equiv0\quad
\quad\mathrm{in}\quad M^n\times\{0\},
\end{equation}
then $\lambda_{1,p}(t)$ is strictly increasing and therefore
differentiable almost everywhere along the unnormalized Yamabe flow
(\ref{Ypad}) on $[0,T)$.
\end{theorem}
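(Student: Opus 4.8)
The plan is to mimic the proof of Theorem \ref{T101a}, with Proposition \ref{genYam1} playing the role of Proposition \ref{propo1}: it suffices to show that the integrand $\mathcal{L}(g(t),f(t))$ appearing in (\ref{genYa1}) is strictly positive. Here $f(t)$ is the smooth one-parameter family built exactly as in the proof of Proposition \ref{propo1} via the determinant rescaling (\ref{constr1})--(\ref{constr12}), normalized so that $\int_M|f(t)|^p d\mu=1$, $\int_M|f(t)|^{p-2}f(t)\,d\mu=0$, and $f(t_2)$ is the eigenfunction of $\lambda_{1,p}(t_2)$. Since this construction depends only on the evolution of the metric's determinant, it transfers verbatim to the Yamabe flow.

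First I would fix $t_2\in(0,T)$ and simplify $\mathcal{L}(g(t_2),f(t_2))$ in (\ref{genYam2}). Because $f(t_2)$ is the eigenfunction, $\Delta_p f=-\lambda_{1,p}(t_2)|f|^{p-2}f$ at $t_2$, so the middle term becomes $p\lambda_{1,p}(t_2)\int_M|f|^{p-2}f\,\partial_t f\,d\mu$. To evaluate $\int_M|f|^{p-2}f\,\partial_t f\,d\mu$ I would differentiate the constraint $\int_M|f(t)|^p d\mu_{g(t)}=1$. Along the Yamabe flow (\ref{Ypad}) one has $h_{ij}=\tfrac R2 g_{ij}$, $H=\tfrac{nR}{2}$ and $\partial_t(d\mu)=-\tfrac n2 R\,d\mu$, whence
\[
p\int_M|f|^{p-2}f\frac{\partial f}{\partial t}\,d\mu=\frac n2\int_M|f|^p R\,d\mu .
\]
Substituting this back gives the clean expression
\[
\mathcal{L}(g(t_2),f(t_2))=\frac{p-n}{2}\int_M|df|^p R\,d\mu+\frac n2\,\lambda_{1,p}(t_2)\int_M|f|^p R\,d\mu .
\]

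Now the two hypotheses take effect. Since $p\geq n$, the coefficient $\tfrac{p-n}{2}\geq 0$, so the first term is nonnegative once $R\geq 0$, and the second term is nonnegative because $\lambda_{1,p}(t_2)>0$. To promote this to strict positivity I would invoke the maximum principle for the scalar-curvature evolution along the unnormalized Yamabe flow,
\[
\frac{\partial}{\partial t}R=(n-1)\Delta R+R^2 .
\]
The reaction term $R^2\geq 0$ shows that $R\geq 0$ is preserved, and the strong maximum principle upgrades the initial condition $R\geq 0$, $R\not\equiv 0$ to $R>0$ throughout $M^n\times(0,T)$. With $R>0$ and $\int_M|f|^p d\mu=1$ we get $\int_M|f|^p R\,d\mu>0$, hence $\mathcal{L}(g(t_2),f(t_2))>0$ for every $t_2\in(0,T)$.

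Finally, since $f(x,t)$ is smooth in $t$, the inequality $\mathcal{L}(g(\xi),f(\xi))>0$ persists on a small neighborhood of $t_2$; feeding this into (\ref{genYa1}) yields $\lambda_{1,p}(t_2)>\lambda_{1,p}(t_1)$ for $t_1<t_2$ close to $t_2$, and as $t_2$ is arbitrary, $\lambda_{1,p}(t)$ is strictly increasing on $[0,T)$. Differentiability almost everywhere then follows from Lebesgue's theorem for monotone functions, just as at the end of the proof of Theorem \ref{T101a}. I expect the positivity step to be the crux: the algebra alone only delivers $\mathcal{L}\geq 0$, the hypothesis $p\geq n$ is exactly what keeps the possibly large first term from contributing the wrong sign, and the \emph{strict} inequality rests entirely on the strong maximum principle forcing $R>0$ for $t>0$.
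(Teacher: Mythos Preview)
Your proposal is correct and follows essentially the same approach as the paper: you simplify $\mathcal{L}(g(t_2),f(t_2))$ using the eigenfunction equation and the differentiated volume constraint to reach the identical expression $\tfrac{p-n}{2}\int_M|df|^pR\,d\mu+\tfrac n2\lambda_{1,p}(t_2)\int_M|f|^pR\,d\mu$, then use the strong maximum principle for $\partial_t R=(n-1)\Delta R+R^2$ to force $R>0$ for $t>0$, and finish via the smoothness-and-integration argument of Theorem~\ref{T101a}. The paper's proof is the same in every substantive step.
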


\begin{proof}[Proof of Theorem \ref{ThmYam}]
Using basically the same trick as in proving Theorem \ref{T101a}, we
shall prove this result. Under the Yamabe flow (\ref{Ypad}), from
the constraint condition
\[
\frac{d}{dt}\int_M \left|f(t)\right|^p d\mu_{g(t)}=0,
\]
we have
\begin{equation}\label{xianzYam}
p\int_M |f|^{p-2}f \frac{\partial f}{\partial t} d\mu=\frac
n2\int_M|f|^pRd\mu.
\end{equation}
Note that at time $t_2$, $f(t_2)$ is the eigenfunction for the first
eigenvalue $\lambda_{1,p}(t_2)$ of $\Delta_{p_{g(t_2)}}$. Therefore
at time $t_2$, we have
\begin{equation}\label{xianzYamp}
\Delta_p f(t_2)=-\lambda_{1,p}(t_2)|f(t_2)|^{p-2}f(t_2).
\end{equation}
By Proposition \ref{genYam1}, at time $t_2$, we have
\begin{equation}
\begin{aligned}\label{henden2}
\mathcal{L}(g(t_2),f(t_2))&=\frac{p-n}{2}\int_M|df|^p Rd\mu-p\int_M
\Delta_p f\frac{\partial f}{\partial t} d\mu\\
&=\frac{p-n}{2}\int_M| df|^pRd\mu
+p\lambda_{1,p}(t_2)\int_M|f|^{p-2}f
\frac{\partial f}{\partial t}d\mu\\
&=\frac{p-n}{2}\int_M|df|^p R d\mu +\frac
n2\lambda_{1,p}(t_2)\int_M|f|^p R d\mu,
\end{aligned}
\end{equation}
where we used (\ref{xianzYamp}) and (\ref{xianzYam}). Notice that
the evolution of the scalar curvature $R$ under the Yamabe flow
(\ref{Ypad}) (see \cite{ChowYam}) is
\begin{equation}\label{dimscaYam}
\frac{\partial}{\partial t}R=(n-1)\Delta R+R^2.
\end{equation}
Applying the strong maximum principle, $R(g(0))\geq0$ and
$R(x_0,0)>0$ for some $x_0\in M^n$ imply that $R(x,t)>0$ for all
$(x,t)\in M^2\times(0,T)$. Since $p\geq n$, from (\ref{henden2}), we
then have $\mathcal{L}(g(t_2),f(t_2))>0$. Then using the same
arguments in proving Theorem \ref{T101a} yields the desired result.
\end{proof}

For the normalized Yamabe flow, we have
\begin{lemma}\label{genlemma3}
If $\lambda_{1,p}(t)$ is the first eigenvalue of
$\Delta_{p_{g(t)}}$, whose metric satisfying normalized Yamabe flow
(\ref{defL}) and $f(t_0)$ is the corresponding eigenfunction of
$\lambda_{1,p}(t)$ at time $t_0$, then we have
\begin{equation}\label{ppk5}
\frac{d}{dt}\lambda_{1,p}(f,t)\Big|_{t=t_0}=\frac{p-n}{2}\int_M|df|^p
(R-r)d\mu+\frac n2\lambda_{1,p}(f(t_0),t_0)\int_M|f|^p(R-r)d\mu,
\end{equation}
\end{lemma}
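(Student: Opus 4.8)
The plan is to recognize the normalized Yamabe flow (\ref{defL}) as a special instance of the general evolving-metric equation (\ref{genflow2}) and then invoke Lemma \ref{genlemma2} directly, so that the entire computation reduces to an algebraic substitution. First I would rewrite (\ref{defL}), namely $\frac{\partial}{\partial t}g_{ij}=(r-R)g_{ij}$, in the form $\frac{\partial}{\partial t}g_{ij}=-2h_{ij}$ by setting $h_{ij}=\frac{R-r}{2}g_{ij}$. Tracing with $g^{ij}$ then gives $H=\mathrm{tr}\,h=g^{ij}h_{ij}=\frac n2(R-r)$.

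With $h_{ij}$ and $H$ identified, I would substitute them into the master formula (\ref{dd5mn}) of Lemma \ref{genlemma2}. The first summand there becomes $\lambda_{1,p}(f(t_0),t_0)\int_M|f|^pH\,d\mu=\frac n2\lambda_{1,p}(f(t_0),t_0)\int_M|f|^p(R-r)\,d\mu$, which is precisely the second term of the claimed identity (\ref{ppk5}). For the second summand the key pointwise identity is
\[
ph_{ij}-Hg_{ij}=\left(\frac{p(R-r)}{2}-\frac{n(R-r)}{2}\right)g_{ij}=\frac{(p-n)(R-r)}{2}\,g_{ij};
\]
contracting this against $|df|^{p-2}f_if_j$ and using $g_{ij}f_if_j=|df|^2$ collapses the integral to $\frac{p-n}{2}\int_M|df|^p(R-r)\,d\mu$, which is the first term of (\ref{ppk5}).

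Adding the two pieces yields (\ref{ppk5}) verbatim, finishing the argument. I do not anticipate any substantive obstacle, since Lemma \ref{genlemma2} has already absorbed all of the integration by parts, the use of the eigenfunction equation (\ref{elag}) at $t_0$, and the constraint $\int_M|f|^pd\mu=1$ needed to control $\int_M|f|^{p-2}f\,\partial_t f\,d\mu$. The only point demanding care is the sign bookkeeping: one must correctly match the convention $\frac{\partial}{\partial t}g_{ij}=-2h_{ij}$ of (\ref{genflow2}) to the $(r-R)$-form of the normalized Yamabe flow, and keep track of how the time-dependent scalar $r$ enters both through $H$ and through the trace-part of $ph_{ij}-Hg_{ij}$; a sign slip here would merely interchange the roles of the two resulting terms, so it is worth double-checking against the unnormalized computation in Proposition \ref{genYam1}, where the same substitution with $r=0$ recovers (\ref{genYam2}).
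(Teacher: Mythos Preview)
Your proposal is correct and follows exactly the paper's own approach: the paper's proof is the single sentence ``Substituting $h_{ij}=\frac{R-r}{2}\,g_{ij}$ into Lemma \ref{genlemma2}, then the result follows.'' Your write-up simply unpacks that substitution in detail, with the sign and trace computations all matching.
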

\begin{proof}
Substituting $h_{ij}=\frac{R-r}{2} g_{ij}$ into Lemma
\ref{genlemma2}, then the result follows.
\end{proof}

In the end of this section, we will apply Lemma \ref{genlemma3} to
construct some monotonic quantities along the unnormalized Yamabe
flow, generalizing earlier results for $p=2$ derived by the first
author in \cite{Wu}.

\begin{theorem}\label{Yamadiff}
Let $g(t)$, $t\in[0,T)$, be a solution of the unnormalized Yamabe
flow (\ref{Ypad}) on a closed manifold $M^n$ and let
$\lambda_{1,p}(t)$ be the first eigenvalue of the $p$-Laplace
operator  of the metric $g(t)$. Assume that the initial scalar
curvature $R(g(0))>0$. Then on one hand, if $1<p<n$,
\begin{equation}
\lambda_{1,p}(t)\cdot\left(1-\rho_0t\right)^{n/2}
\cdot\left(1-\sigma_0t\right)^{\frac{p-n}{2}}
\end{equation}
is increasing along the unnormalized Yamabe flow on $[0,T'')$ and if
$p\geq n$,
\begin{equation}
\lambda_{1,p}(t)\cdot\left(1-\rho_0t\right)^{p/2}
\end{equation}
is increasing along the unnormalized Yamabe flow on $[0,T')$. On the
other hand, the following quantities
\begin{equation}\label{Yacase3}
\lambda_{1,p}(t)\cdot\left(1-\rho_0t\right)^{\frac{p-n}{2}}
\cdot\left(1-\sigma_0t\right)^{n/2}\quad\quad (1<p<n)
\end{equation}
and
\begin{equation}
\lambda_{1,p}(t)\cdot\left(1-\sigma_0t\right)^{p/2}
\quad\quad\quad\quad\quad\quad(p\geq n)
\end{equation}
are both decreasing along the unnormalized Yamabe flow on $[0,T'')$,
where $\rho_0:=\inf_{M^2}R(0)$, $\sigma_0:=\sup_{M^2}R(0)$,
$T':=\min\{\rho_0^{-1},T\}$ and $T'':=\min\{\sigma_0^{-1},T\}$.
Therefore $\lambda_{1,p}(t)$ is differentiable almost everywhere
along the unnormalized Yamabe flow.
\end{theorem}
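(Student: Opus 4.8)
The plan is to combine the Yamabe-flow evolution identity for $\lambda_{1,p}(f,t)$ with sharp scalar-curvature bounds from the maximum principle, exactly in the spirit of the proofs of Theorems \ref{T10b} and \ref{norma1}. First I would record the unnormalized Yamabe analogue of Lemma \ref{genlemma3}: substituting $h_{ij}=\tfrac R2 g_{ij}$ and $H=\tfrac{nR}{2}$ into Lemma \ref{genlemma2} (so that $ph_{ij}-Hg_{ij}=\tfrac{p-n}{2}Rg_{ij}$) gives, at a time $t_0$ where $f(t_0)$ is the eigenfunction,
\[
\frac{d}{dt}\lambda_{1,p}(f,t)\Big|_{t=t_0}=\frac n2\lambda_{1,p}(f(t_0),t_0)\int_M|f|^pR\,d\mu+\frac{p-n}{2}\int_M|df|^pR\,d\mu .
\]
Since $\int_M|f|^p d\mu=1$ and $\int_M|df|^pd\mu=\lambda_{1,p}(f(t_0),t_0)$, each integral becomes a clean multiple of $\lambda$ once $R$ is pinched between two functions of $t$.

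Next I would apply the maximum principle to the scalar-curvature evolution $\partial_t R=(n-1)\Delta R+R^2$, comparing with the ODE $\dot\rho=\rho^2$. With $\rho_0=\inf_M R(0)>0$ and $\sigma_0=\sup_M R(0)$ this yields
\[
\frac{\rho_0}{1-\rho_0 t}\le R(x,t)\le\frac{\sigma_0}{1-\sigma_0 t}
\]
on the intervals where the denominators stay positive, namely the lower bound holds on $[0,T')$ and the upper bound on $[0,T'')$ (note $\sigma_0^{-1}\le\rho_0^{-1}$, so the upper bound is the binding constraint whenever both appear, which is why the $1<p<n$ statements are phrased on $[0,T'')$).

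The core of the argument is then four parallel one-sided estimates, selected by the sign of $\tfrac{p-n}{2}$ and by whether an increasing or a decreasing quantity is sought. For a lower bound on the derivative (increasing quantities) I bound $\int_M|f|^pR\,d\mu$ below by $\tfrac{\rho_0}{1-\rho_0 t}$, and bound the second term using the lower bound on $R$ when $p\ge n$ (so $\tfrac{p-n}{2}\ge0$) and the upper bound on $R$ when $1<p<n$ (so $\tfrac{p-n}{2}<0$). This produces $\tfrac{d}{dt}\lambda_{1,p}(f,t)|_{t_0}\ge\tfrac p2\tfrac{\rho_0}{1-\rho_0 t_0}\lambda$ in the case $p\ge n$, and $\ge\big(\tfrac n2\tfrac{\rho_0}{1-\rho_0 t_0}+\tfrac{p-n}{2}\tfrac{\sigma_0}{1-\sigma_0 t_0}\big)\lambda$ when $1<p<n$; these are exactly the logarithmic derivatives of $(1-\rho_0 t)^{-p/2}$ and of $(1-\rho_0 t)^{-n/2}(1-\sigma_0 t)^{-(p-n)/2}$. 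Reversing the roles of the bounds gives the matching upper estimates for the two decreasing quantities. As in Theorem \ref{T10b}, since $\lambda_{1,p}(f,t)$ is smooth in $t$ each pointwise inequality at $t_0$ persists in a neighborhood; integrating the resulting differential inequality and using $\lambda_{1,p}(f(t_0),t_0)=\lambda_{1,p}(t_0)$ together with $\lambda_{1,p}(f(t_1),t_1)\ge\lambda_{1,p}(t_1)$ turns it into the claimed monotonicity of the products. Finally, because the weight factors are smooth and monotone, Lebesgue's theorem for monotone functions gives differentiability of $\lambda_{1,p}(t)$ almost everywhere.

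I expect the main obstacle to be purely bookkeeping: keeping the four sign cases straight and making sure the correct one-sided curvature bound is inserted in each term so that the exponents $n/2$, $p/2$ and $(p-n)/2$ emerge exactly. The analytic heart — passing from the derivative identity at the single eigen-time $t_0$ to a genuine differential inequality valid near $t_0$ — is already established in Proposition \ref{propo1} (and reused in Proposition \ref{genYam1}), so no new difficulty arises there.
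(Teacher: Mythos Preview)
Your proposal is correct and follows essentially the same route as the paper: derive the unnormalized-Yamabe version of the evolution identity (the paper's (\ref{ppw1})), insert the maximum-principle bounds $\rho_0/(1-\rho_0 t)\le R\le\sigma_0/(1-\sigma_0 t)$ in each term according to the sign of $\tfrac{p-n}{2}$, and then run the integration-plus-eigenvalue-inequality argument of Theorems \ref{T10b}/\ref{coro17} to obtain the four monotone products and almost-everywhere differentiability. The only difference is cosmetic---you cite Lemma \ref{genlemma2} directly while the paper routes through Lemma \ref{genlemma3} and sets $r=0$.
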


\begin{proof}
Since this proof is similar to the proofs of Theorems \ref{coro17}
and \ref{thm19}, we only give some key inequalities. Note that under
the unnormalized Yamabe flow,
\begin{equation*}
\frac{\partial}{\partial t}R=(n-1)\Delta R+R^2.
\end{equation*}
Applying the maximum principle to this equation, we have lower and
upper bounds of the scalar curvature $R$
\begin{equation}\label{yameequ1}
R(x,t)\geq\frac{\rho_0}{1-\rho_0t},\quad t\in[0,T');\quad \quad
R(x,t)\leq\frac{\sigma_0}{1-\sigma_0t},\quad t\in[0,T'').
\end{equation}
where $\rho_0:=\inf_{M^n}R(0)$, $\sigma_0:=\sup_{M^n}R(0)$,
$T':=\min\{\rho_0^{-1},T\}$ and $T'':=\min\{\sigma_0^{-1},T\}$.

By (\ref{ppk5}) of Lemma \ref{genlemma3}, we also have
\begin{equation}\label{ppw1}
\frac{d}{dt}\lambda_{1,p}(f,t)\Big|_{t=t_0}=\frac{p-n}{2}\int_M|df|^p
Rd\mu+\frac n2\lambda_{1,p}(f(t_0),t_0)\int_M|f|^pRd\mu,
\end{equation}
where $f$ is defined by Lemma \ref{genlemma3}.

\vspace{0.5em}

On one hand, if $1<p<n$, by (\ref{yameequ1}) and (\ref{ppw1}) we
conclude
\[
\frac{d}{dt}\lambda_{1,p}(f,t)\Big|_{t=t_0}
\geq\lambda_{1,p}(f(t_0),t_0)\left[\frac{p-n}{2}
\cdot\frac{\sigma_0}{1-\sigma_0t_0}+\frac
n2\cdot\frac{\rho_0}{1-\rho_0t_0}\right].
\]
Then following the exactly same arguments as in proving Theorem
\ref{coro17}, we see that
\[
\lambda_{1,p}(t)\cdot\left(1-\rho_0t\right)^{n/2}
\cdot\left(1-\sigma_0t\right)^{\frac{p-n}{2}}
\]
is increasing along the unnormalized Yamabe flow on $[0,T'')$.

If $p\geq n$, by (\ref{yameequ1}) and (\ref{ppw1}) we have
\[
\frac{d}{dt}\lambda_{1,p}(f,t)\Big|_{t=t_0}\geq\frac
p2\lambda_{1,p}(f(t_0),t_0)\cdot\frac{\rho_0}{1-\rho_0t_0}.
\]
Then using our standard arguments,  we conclude that
\[
\lambda_{1,p}(t)\cdot\left(1-\rho_0t\right)^{p/2}
\]
is increasing along the unnormalized Yamabe flow on $[0,T')$.

\vspace{0.5em}

On the other hand, we consider the decreasing quantities under the
unnormalized Yamabe flow. If $1<p<n$, by (\ref{yameequ1}) and
(\ref{ppw1}), we can get
\[
\frac{d}{dt}\lambda_{1,p}(f,t)\Big|_{t=t_0}
\leq\lambda_{1,p}(f(t_0),t_0)\left[\frac{p-n}{2}
\cdot\frac{\rho_0}{1-\rho_0t_0}+\frac
n2\cdot\frac{\sigma_0}{1-\sigma_0t_0}\right].
\]
Using the same arguments as in proving Theorem \ref{thm19}, then
(\ref{Yacase3}) follows.

If $p\geq n$, by (\ref{yameequ1}) and (\ref{ppw1}), we can obatin
\[
\frac{d}{dt}\lambda_{1,p}(f,t)\Big|_{t=t_0}\leq\frac
p2\lambda_{1,p}(f(t_0),t_0)\cdot\frac{\sigma_0}{1-\sigma_0t_0}.
\]
By the standard arguments of Theorem \ref{thm19}, we conclude that
\[
\lambda_{1,p}(t)\cdot\left(1-\sigma_0t\right)^{p/2}
\]
is decreasing along the unnormalized Yamabe flow on $[0,T'')$.
\end{proof}

\section*{Acknowledgment}
The authors would like to thank the referee for helpful comments and
suggestions to improve this paper.

\end{document}